
\documentclass[final,leqno]{siamltex}


\usepackage{amsfonts}
\usepackage{amsmath}
\usepackage{bbm}
\usepackage{graphicx}
\DeclareMathOperator*{\esssup}{ess\,sup}

\DeclareMathOperator*{\essinf}{ess\,inf}

\newtheorem{assumption}[theorem]{Assumption}



\title{Dynkin games with Poisson random intervention times\thanks{The work
is partially supported by a start-up research fund from the
University of Warwick and NSFC No. 11771158.}}


\author{Gechun Liang,\ \ Haodong Sun\thanks{Department of Statistics, University of Warwick, Coventry, CV4 7AL, U.K.
Email adress: {\tt g.liang@warwick.ac.uk;}\ \ {\tt
h.sun.9@warwick.ac.uk}}}

\begin{document}

\maketitle

\begin{abstract}
This paper introduces a new class of Dynkin games, where the two
players are allowed to make their stopping decisions at a sequence
of exogenous Poisson arrival times. The value function and the
associated optimal stopping strategy are characterized by the
solution of a backward stochastic differential equation. The paper
further provides a replication strategy for the game, and applies
the model to study the optimal conversion and calling strategies of
convertible bonds, and their asymptotics when the Poisson intensity
goes to infinity.
\end{abstract}

\begin{keywords}
constrained Dynkin game, \and penalized BSDE, \and optimal stopping
strategy,  \and replication strategy, \and convertible bond.
\end{keywords}

\begin{AMS} 60G40, \and 91A05, \and 91G80, \and 93E20.
\end{AMS}

\pagestyle{myheadings} \thispagestyle{plain} \markboth{Gechun Liang
and Haodong Sun}{Dynkin games with Poisson random intervention
times}


\section{Introduction}
Dynkin games are the games on stopping times, where two players
determine their optimal stopping times as their strategies. The game
was first introduced by Dynkin \cite{dynkin1969game}, and later
generalized by Neveu \cite{neveu1975discrete} in 1970s. In this
game, two players observe two stochastic processes, say $L$ and $U$,
and their aims are to maximize/minimize the expected value of the
payoff
$$R(\sigma,\tau)=L_{\tau}\mathbbm{1}_{\{\tau\leq \sigma\}}+U_{\sigma}\mathbbm{1}_{\{\sigma< \tau\}}$$
over stopping times $\tau$ and $\sigma$, respectively. In a
discrete-time setting, under the assumption that $U\geq L$, Neveu
proved the existence of the game value and its associated optimal
strategy.

Since then, there has been a considerable development of Dynkin
games. The corresponding continuous time models were developed,
among others, by Bismut \cite{bismut1977probleme}, Alario-Nazaret et
al \cite{alario1982dynkin}, Lepeltier and Maingueneau
\cite{lepeltier1984jeu} and Morimoto \cite{morimoto1984dynkin}. In
order to relax the condition $U\geq L$, Yasuda
\cite{yasuda1985randomized} proposed to extend the class of
strategies to randomized stopping times, and proved that the game
value exists under merely an integrability condition. Rosemberg et
al \cite{rosenberg2001stopping}, Touzi and Vielle
\cite{touzi2002continuous} and Laraki and Solan
\cite{laraki2005value} further extended his work in this direction.
If the two players in the game are with asymmetric payoffs, then it
gives arise to a nonzero-sum Dynkin game. See, for example, Hamadene
and Zhang \cite{hamadene2010} and more recently De Angelis et al
\cite{De Angelis2015} with more references therein. A robust version
of Dynkin games can be found in Bayraktar and Yao
\cite{Bayraktar2017} if the players are ambiguous about their
probability model.

The setups in all the aforementioned works are either in continuous
time where stopping times take any value in a certain time interval,
or in discrete time where stopping times only take values in a
pre-specified time grid. In this paper, we consider a hybrid of
continuous and discrete times, and introduce a new type of Dynkin
games, where both players are allowed to stop at a sequence of
random times generated by an exogenous Poisson process serving as a
signal process. We call such a Dynkin game a \emph{constrained
Dynkin game}.

The underlying Poisson process can be regarded as an exogenous
constraint on the players' abilities to stop, so it may represent
the liquidity effect, i.e. the Poisson process indicates the times
at which the underlying stochastic processes are available to stop.
Moreover, the Poisson process can also be seen as an information
constraint. The players are allowed to make their stopping decisions
at all times, but they are only able to observe the underlying
stochastic processes at Poisson times.

Our first main result is Theorem \ref{bigTheorem}, which
characterizes the value of the constrained Dynkin game and its
associated optimal stopping strategy in terms of the solution of a
penalized backward stochastic differential equation (BSDE). The
latter is widely used to approximate the solution of a reflected
BSDE with double obstacles and the corresponding continuous time
Dynkin game. The main idea to solve the constrained Dynkin game is
to introduce a family of auxiliary games (see
(\ref{aux_upperValues2})-(\ref{aux_lowerValues2})), for which
standard dynamic programming principle holds. Furthermore, following
from the convergence of penalized BSDE to reflected BSDE (see, for
example, \cite{cvitanic1996backward} and
\cite{hamadene1999infinite}) and the penalized BSDE characterization
(\ref{final_game_value}) of the constrained Dynkin game, we also
make a connection with standard Dynkin games in continuous time.
That is, the value of the constrained Dynkin game will converge to
the value of its continuous time counterpart when the Poisson
intensity goes to infinity.

Our second main result is about replication of the constrained
Dynkin game (see Theorem \ref{theorem_5.1}). This has an application
to the hedging problems in finance. In the existing literature of
financial applications of optimal stopping with Poisson times, the
vast majority of papers focus on the risk-neutral valuation without
even mentioning the issue of hedging (see \cite{dupuis2002optimal}
and \cite{lempa2012optimal} among others). This somewhat lacks a
foundation since, as is well known, the major argument supporting
the risk-neutral valuation is the existence of hedging strategies.
We address this issue by constructing a replication strategy for the
constrained Dynkin game (which in particular covers the optimal
stopping case). For such a replication problem, a new element is the
jump risk stemming from the Poisson process. To hedge this jump
risk, we introduce a pricing process generated by the jump times of
the Poisson process. We then construct the replication strategies
recursively for a sequence of constrained Dynkin games starting from
different Poisson arrival times, and for each game, the replication
strategy is constructed via two linear BSDEs. The first BSDE is used
to replicate the payoff of the game before the next jump time, and
the second equation is used to replicate the payoff after this jump
time.

With the above replication strategies behind the risk-neutral
valuation, we then apply the constrained Dynkin game to study
convertible bonds. In a convertible bond, the bondholder decides
whether to keep the bond to collect coupons or to convert it to the
firm's stocks. She will choose a conversion strategy to maximize the
bond value. On the other hand, the issuing firm has the right to
call the bond, and presumably acts to maximize the equity value of
the firm by minimizing the bond value. This creates a two-person,
zero-sum Dynkin game.

Traditionally, convertible bond models often assume that both the
bond holder and the firm are allowed to stopped at any stopping time
adapted to the firm's fundamental (such as its stock prices). In
reality, there may exist some liquidation constraint as an external
shock, and both players only make their decisions when such a shock
arrives. We model such a liquidation shock as the arrival times of
an exogenous Poisson process, and thus the convertible bond model
falls into the framework of constrained Dynkin games. A similar idea
has first appeared in the modeling of debt run problems (see
\cite{Liang2}), which can be formulated as optimal stopping problems
with Poisson arrival times.

Furthermore, in a Markovian setting, we derive explicitly the
optimal stopping strategies for both the bondholder and the firm. We
show that if the initial stock price is not too high (otherwise the
game will stop at the first Poisson arrival time), the optimal
stopping rules of the two players depend on the relationship between
the coupon rate $c$, dividend rate $q$, interest rate $r$ and
surrender price $K$. For the firm, its optimal stopping strategy is
to either call the bond back as soon as possible (if $c\geq rK$) or
postpone the calling time of the bond as late as possible (if
$c<rK$). In contrast, the investor's optimal stopping strategy
depends on the relationship between $c$ and $qK$. If $c>qK$, the
investor will delay her conversion time as late as possible; if
$c\leq qK$, her conversion strategy is determined by an optimal
conversion boundary, the latter of which is obtained by solving a
free boundary problem.

Turning to the literature, the optimal stopping problem with
constraints on the stopping times was introduced by Dupuis and Wang
\cite{dupuis2002optimal}, when they used it to model perpetual
American options exercised at exogenous Poisson arrival times. See
also Lempa \cite{lempa2012optimal} and Menaldi and Robin
\cite{menaldi2016some} for further extensions of this type of
optimal stopping problems. On the other hand, Liang
\cite{liang2015stochastic} made a connection between such kind of
optimal stopping problems with penalized BSDE. The corresponding
optimal switching (impulse control) problems were studied by Liang
and Wei \cite{liang2013optimal} and more recently by Menaldi and
Robin \cite{menaldi2017some} with  more general signal times and
state spaces.

The study of convertible bonds dated back to Brennan and Schwartz
\cite{brennan1977convertible} and Ingersoll
\cite{ingersoll1977contingent}. However, it was Sirbu et al
\cite{si2004perpetual} who first analyzed the optimal strategy of
perpetual convertible bonds (see also Sirbu and Shreve \cite{si2006}
for the finite horizon counterpart). They reduced the problem from a
Dynkin game to an optimal stopping problem, and discussed when call
precedes conversion and vice versa. Several more realistic features
of convertible bonds have been taken into account since then. For
example, Bielecki et al \cite{bielecki2008arbitrage} considered the
problem of the decomposition of a convertible bond into bond
component and option component. Crepey and Rahal
\cite{crepey2011pricing} studied the convertible bond with call
protection, which is typically path dependent. Chen et al
\cite{chen2013nonzero} considered the tax benefit and bankruptcy
cost for convertible bonds. For a complete literature review, we
refer to the aforementioned papers with references therein.




The paper is organized as follows. Section 2 contains the problem
formulation and main result, with its proof provided in section 3.
In section 4, we establish a connection with standard Dynkin games.
Section 5 is about replication of the constrained Dynkin game. In
section 6, we apply the constrained Dynkin game to study the
convertible bonds in a Markovian setting, and derive the explicit
optimal stopping strategies and the corresponding free boundaries
under various situations. Section 7 carries out an asymptotic
analysis of the game values and the free boundaries when the Poisson
intensity goes to infinity.

\section{Constrained Dynkin games}
Let $(W_t)_{t\geq 0}$ be a $d$-dimensional standard Brownian motion
defined on a filtered probability space
$(\Omega,\mathcal{F},\mathbbm{F}=\{\mathcal{F}_t\}_{t\geq
0},\mathbbm{P})$ with $\mathbb{F}$ being the minimal augmented
filtration of $W$. Let $\{T_i\}_{i\geq 0}$ be the arrival times of
an independent Poisson process with intensity $\lambda$ and minimal
augmented filtration $\mathbb{H}=\{\mathcal{H}_t\}_{t\geq 0}$.
Denote the smallest filtration generated by $\mathbb{F}$ and
$\mathbb{H}$ as $\mathbb{G}=\{\mathcal{G}_t\}_{t\geq 0}$, i.e.
$\mathcal{G}_t=\mathcal{F}_t\vee\mathcal{H}_t$. Without loss of
generality, we also assume that $T_0=0$ and $T_{\infty}=\infty$.

Let $T$ be a finite $\mathbbm{F}$-stopping time representing the
terminal time of the game, and $\xi$ be an
$\mathcal{F}_T$-measurable random variable representing the
corresponding payoff. Define a random variable $M:\Omega\to
\mathbbm{N}$ such that $T_{M}$ is the next Poisson arrival time
after $T$, i.e. $M(\omega)=\sum_{i\geq
1}i\mathbbm{1}_{\{T_{i-1}(\omega)\leq T(\omega)<T_{i}(\omega)\}}$.

For any integer $i\geq 0$, define the control set
\[\mathcal{R}_{T_i}(\lambda)=\{\mathbbm{G}\mbox{-stopping time }\tau\mbox{ for }\tau(\omega)=T_N(\omega)\mbox{ where }i\leq N\leq M(\omega)\}.\]
The subscript $T_i$ in $\mathcal{R}_{T_i}(\lambda)$ represents the
smallest stopping time that is allowed to choose, and $\lambda$
represents the intensity of the underlying Poisson process.

Consider the following \emph{constrained Dynkin game}, where two
players choose their respective stopping times
$\sigma,\tau\in\mathcal{R}_{T_{1}}(\lambda)$ in order to
minimize/maximize the expected value of the discounted payoff
\begin{equation}
\label{def_payoff} R(\sigma,\tau)=\int_0^{\sigma\wedge\tau\wedge
T}e^{-rs}f_s\,ds+e^{-rT}\xi\mathbbm{1}_{\{\sigma\wedge \tau\geq
T\}}+e^{-r\tau}L_{\tau}\mathbbm{1}_{\{\tau<T,\tau\leq\sigma\}}+e^{-r\sigma}U_{\sigma}\mathbbm{1}_{\{\sigma<T,\sigma<
\tau\}},
\end{equation}
where $r>0$ is the discount rate, and $f$, as a real-valued
$\mathbbm{F}$-progressively measurable process, is the running
payoff. The terminal payoff is $U$ if $\sigma$ happens firstly, {$L$
if $\tau$ happens firstly or $\sigma$ and $\tau$ happen
simultaneously}, and $\xi$ otherwise, where $L$ and $U$ are two
real-valued $\mathbbm{F}$-progressively measurable processes.

Let us define the upper and lower values of the constrained Dynkin
game
\begin{equation}
\label{upperValues}
\overline{v}^{\lambda}=\inf_{\sigma\in\mathcal{R}_{T_1}(\lambda)}\sup_{\tau\in\mathcal{R}_{T_1}(\lambda)}\mathbbm{E}\left[R(\sigma,\tau)\right],
\end{equation}
\begin{equation}
\label{lowerValues}
\underline{v}^{\lambda}=\sup_{\tau\in\mathcal{R}_{T_1}(\lambda)}\inf_{\sigma\in\mathcal{R}_{T_1}(\lambda)}\mathbbm{E}\left[R(\sigma,\tau)\right].
\end{equation}
The game (\ref{upperValues})-(\ref{lowerValues}) is said to have
value $v^{\lambda}$ if
$v^{\lambda}=\overline{v}^{\lambda}=\underline{v}^{\lambda}.$ It is
standard to show that if there exists a saddle point
$(\sigma^*,\tau^*)\in \mathcal{R}_{T_1}(\lambda)\times
\mathcal{R}_{T_1}(\lambda)$ such that
$\mathbbm{E}\left[R(\sigma^*,\tau)\right]\leq
\mathbbm{E}\left[R(\sigma^*,\tau^*)\right]\leq
\mathbbm{E}\left[R(\sigma,\tau^*)\right]$ for every
$(\sigma,\tau)\in \mathcal{R}_{T_1}(\lambda)\times
\mathcal{R}_{T_1}(\lambda)$, then the value of this game exists and
equals to $v^{\lambda}=\mathbbm{E}\left[R(\sigma^*,\tau^*)\right].$

There are two new features of the above constrained Dynkin game.
First, there is a control constraint in the sense that only stopping
at Poisson arrival times is allowed. Second, the players are not
allowed to stop at the initial starting time. Instead, they are only
allowed to stop from the first Poisson time onwards.

We also consider an auxiliary game related to the above constrained
Dyknin game by replacing the control set in
(\ref{upperValues})-(\ref{lowerValues}) with
$\mathcal{R}_{T_0}(\lambda)$, so the players are also allowed to
stop at the initial starting time. That is
\begin{equation}
\label{auxupperValues}
\overline{\hat{v}}^{\lambda}=\inf_{\sigma\in\mathcal{R}_{T_0}(\lambda)}\sup_{\tau\in\mathcal{R}_{T_0}(\lambda)}\mathbbm{E}\left[R(\sigma,\tau)\right],
\end{equation}
\begin{equation}
\label{auxlowerValues}
\underline{\hat{v}}^{\lambda}=\sup_{\tau\in\mathcal{R}_{T_0}(\lambda)}\inf_{\sigma\in\mathcal{R}_{T_0}(\lambda)}\mathbbm{E}\left[R(\sigma,\tau)\right].
\end{equation}
Note that the difference between
(\ref{auxupperValues})-(\ref{auxlowerValues}) and
(\ref{upperValues})-(\ref{lowerValues}) is that the former is
allowed to stop at the initial starting time $T_0=0$, while the
latter not. In other words, the players in
(\ref{auxupperValues})-(\ref{auxlowerValues}) first make their
stopping decisions and then move forward, while in
(\ref{upperValues})-(\ref{lowerValues}) they first move forward and
then make their decisions. We shall show that if the game
(\ref{upperValues})-(\ref{lowerValues}) has value $v^{\lambda}$,
then the value of (\ref{auxupperValues})-(\ref{auxlowerValues}) also
exists and is given by
$\hat{v}^{\lambda}=\min\{{U}_0,\max\{v^{\lambda},{L}_0\}\}$, so the
key is to solve the game (\ref{upperValues})-(\ref{lowerValues}).

\subsection{Main result}

To solve the above constrained Dynkin games,  we introduce the
following BSDE defined on a random horizon $[0,T]$:
\begin{equation}
\label{final_game_value} V_{t\wedge T}^{\lambda}=\xi+\int_{t\wedge
T}^T\left[f_s+\lambda\left(L_s-V_s^{\lambda}\right)^+-\lambda\left(V_s^{\lambda}-U_s\right)^+-rV_s^{\lambda}\,\right]ds-\int_{t\wedge
T}^T Z_s^{\lambda}\,dW_s
\end{equation}
for $t\geq 0$. Note that the above BSDE (\ref{final_game_value}) is
often used to construct the solution of a reflected BSDE with two
reflecting barriers $L$ and $U$ (cf. (\ref{final_game_value_cts})).
Intuitively, when $V^{\lambda}$ falls below $L$ (or goes above $U$),
there will be a penalty $\lambda (L-V^{\lambda})$ (or $\lambda
(V^{\lambda}-U)$) incurred, so BSDE (\ref{final_game_value}) is also
refereed to as the penalized equation.

\begin{assumption}
\label{assumption_1} For $t\in[0,T]$, $L_t\leq U_t$, a.s. Moreover,
(i) when $T$ is an unbounded stopping time, the running {payoff} $f$
and the terminal payoffs $L$, $U$ and $\xi$ are all bounded; (ii)
when $T$ is a bounded stopping time, $f$, $L$, $U$ and $\xi$ are
square-integrable, i.e. $\mathbb{E}[\sup_{0\leq t\leq
T}|X_t|^2]<\infty$ for $X=f,L,U$ and $\xi$.
\end{assumption}

The assumption $L\leq U$ is crucial to the existence of the game
value. On the other hand, the conditions (i) and (ii) are to
guarantee the existence and uniqueness of the solution to BSDE
(\ref{final_game_value}), which will in turn be used to construct
the game value and its associated optimal stopping strategy.

\begin{proposition}
Suppose that Assumption \ref{assumption_1} holds. Then, there exists
a unique solution $(V,Z)$ to BSDE (\ref{final_game_value}).
Moreover, (i) when $T$ is unbounded, $V$ is a bounded and continuous
$\mathbb{F}$-adapted process, and $Z \in
\mathcal{M}^{2}_{loc}(0,T;\mathbbm{R}^d)$, where the latter denotes
the space of all $\mathbb{F}$-progressively measurable processes $Z$
such that
\[||Z||^2_{loc}:=\mathbbm{E}\left[\int_0^{t\wedge T} |Z_s|^2\,ds\right]<\infty\quad\text{for}\ t\geq 0;\]
(ii) when $T$ is bounded, then $V$ is a continuous square-integrable
$\mathbb{F}$-adapted process, and $Z \in
\mathcal{M}^{2}(0,T;\mathbbm{R}^d)$.
\end{proposition}

The proof essentially follows from Theorem 4.1 in \cite{Peng} (for
bounded $T$) and Section 5 in \cite{briand1998stability} (for
unbounded $T$), so we omit its proof and refer to \cite{Peng} and
\cite{briand1998stability} for the details. We are now in a position
to state the main result of this paper.

\begin{theorem}
\label{bigTheorem} Suppose that Assumption \ref{assumption_1} holds.
Let $(V^{\lambda},Z^{\lambda})$ be the unique solution to BSDE
(\ref{final_game_value}). Then, the value of the constrained Dynkin
game (\ref{upperValues})-(\ref{lowerValues}) exists and is given by
$v^{\lambda}=\overline{v}^{\lambda}=\underline{v}^{\lambda}=V_0^{\lambda}.$
The corresponding optimal stopping strategy is given by
\begin{equation}
\label{optimalStop} \left\{\begin{array}{l}
\sigma^*_{T_1}=\inf\{T_N\geq T_1:V_{T_N}^{\lambda}\geq U_{T_N}\}\wedge T_{M};\\
\tau^*_{T_1}=\inf\{T_N\geq T_1:V_{T_N}^{\lambda}\leq L_{T_N}\}\wedge
T_{M}.
\end{array}\right.
\end{equation}
Moreover, the value of the Dynkin game
(\ref{auxupperValues})-(\ref{auxlowerValues}) also exists and is
given by
$\hat{v}^{\lambda}=\min\{{U}_0,\max\{v^{\lambda},{L}_0\}\}$, with
the associated optimal stopping strategy $\sigma_{T_0}^*$ and
$\tau_{T_0}^*$.
\end{theorem}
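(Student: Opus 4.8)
The plan is to prove the theorem by a verification argument that uses the solution $V^{\lambda}$ of the penalized BSDE (\ref{final_game_value}) directly as the candidate value process and the strategies (\ref{optimalStop}) as a candidate saddle point. Concretely, I would establish the two inequalities $\mathbbm{E}[R^{\lambda}(\sigma^*_{T_1},\tau)]\leq V_0^{\lambda}$ for every $\tau\in\mathcal{R}_{T_1}(\lambda)$ and $\mathbbm{E}[R^{\lambda}(\sigma,\tau^*_{T_1})]\geq V_0^{\lambda}$ for every $\sigma\in\mathcal{R}_{T_1}(\lambda)$; combined with the trivial bound $\underline{v}^{\lambda}\leq\overline{v}^{\lambda}$ these squeeze $\overline{v}^{\lambda}=\underline{v}^{\lambda}=V_0^{\lambda}$ and identify $(\sigma^*_{T_1},\tau^*_{T_1})$ as a saddle point. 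The choice of strategies (\ref{optimalStop}) is motivated by the one-shot zero-sum game faced at each Poisson time: given a continuation value $C$, the minimizer stops exactly when $C\geq U$ and the maximizer stops exactly when $C\leq L$, so that the local value is the median of $L$, $C$ and $U$; this is precisely the stopping rule encoded in $V^{\lambda}\geq U$ and $V^{\lambda}\leq L$.

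First I would derive a representation formula for $V_0^{\lambda}$ along an arbitrary Poisson-constrained stopping time $\rho\in\mathcal{R}_{T_1}(\lambda)$. Applying It\^o's formula to $e^{-rs}V_s^{\lambda}$ on $[0,\rho\wedge T]$, the $-rV_s^{\lambda}$ term in the generator of (\ref{final_game_value}) cancels against the discounting, and since $V^{\lambda}$ is bounded and $Z^{\lambda}\in\mathcal{M}^2_{loc}$ the stochastic integral is a true martingale after a standard localization. Taking expectations yields
\[ V_0^{\lambda}=\mathbbm{E}\Big[\int_0^{\rho\wedge T}e^{-rs}f_s\,ds+e^{-r(\rho\wedge T)}V_{\rho\wedge T}^{\lambda}+\int_0^{\rho\wedge T}e^{-rs}\lambda\big((L_s-V_s^{\lambda})^+-(V_s^{\lambda}-U_s)^+\big)\,ds\Big]. \]

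The key step, and the one I expect to be the main obstacle, is to convert the continuous penalty integral into a sum over the Poisson arrival times, thereby linking the penalization rate $\lambda$ to the Poisson intensity $\lambda$. Since the Poisson process is independent of $W$ and its jump times are totally inaccessible, for any bounded $\mathbbm{F}$-adapted integrand $\phi$ the process $\sum_{T_i\leq t}\phi_{T_i}-\int_0^t\lambda\phi_s\,ds$ is a $\mathbbm{G}$-martingale; optional stopping at $\rho\wedge T$ (justified by the boundedness in Assumption \ref{assumption_1}) then gives $\mathbbm{E}[\int_0^{\rho\wedge T}\lambda\phi_s\,ds]=\mathbbm{E}[\sum_{T_i\leq\rho\wedge T}\phi_{T_i}]$. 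Applying this with $\phi_s=e^{-rs}((L_s-V_s^{\lambda})^+-(V_s^{\lambda}-U_s)^+)$ rewrites the representation so that the penalty is sampled only at arrival times. I would take care that the endpoint $T_i=\rho\wedge T$ is correctly included when $\rho$ is itself an arrival time, since this is exactly the term that must pair with the stopping payoff; note also that the sum ranges over $i\geq 1$, so the "no stopping at time $0$" constraint is automatic.

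Finally I would subtract the payoff (\ref{def_payoff}) from this representation and sign each contribution. On $\{\sigma\wedge\tau\geq T\}$ the terminal value $V_T^{\lambda}=\xi$ cancels the terminal payoff; on $\{\tau<T,\tau\leq\sigma\}$ and $\{\sigma<T,\sigma<\tau\}$ the endpoint value-term pairs with $L_\tau$, respectively $U_\sigma$. Taking $\sigma=\sigma^*_{T_1}$, every interior arrival satisfies $V_{T_i}^{\lambda}<U_{T_i}$, so its penalty contribution reduces to $(L_{T_i}-V_{T_i}^{\lambda})^+\geq0$; at the stop of $\tau$ the combination $(V_\tau^{\lambda}-L_\tau)+(L_\tau-V_\tau^{\lambda})^+-(V_\tau^{\lambda}-U_\tau)^+=(V_\tau^{\lambda}-L_\tau)^+-(V_\tau^{\lambda}-U_\tau)^+\geq0$ by $L\leq U$; and at $\sigma^*_{T_1}$ the boundary term $V_{\sigma^*}^{\lambda}-U_{\sigma^*}\geq0$ cancels its own penalty term. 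Hence $V_0^{\lambda}-\mathbbm{E}[R^{\lambda}(\sigma^*_{T_1},\tau)]\geq0$. The symmetric computation with $\tau=\tau^*_{T_1}$, now using $V_{T_i}^{\lambda}>L_{T_i}$ at every arrival before $\tau^*_{T_1}$ triggers, gives $V_0^{\lambda}-\mathbbm{E}[R^{\lambda}(\sigma,\tau^*_{T_1})]\leq0$, completing the squeeze. Beyond the compensation identity the argument is purely algebraic signing driven by Assumption \ref{assumption_1} and the definitions (\ref{optimalStop}).
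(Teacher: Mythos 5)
Your argument is correct, but it reaches the theorem by a genuinely different route from the paper. The paper works entirely on the discrete grid of Poisson arrivals: Lemma \ref{firstLemma} integrates the discounted BSDE against the dual ODE $\alpha_t=1-\int_0^t\lambda\alpha_s\,ds$ and the conditional density of $T_n$ to obtain a one-step recursion for $Q^{\lambda}_{T_{n-1}}$; Lemma \ref{secondLemma} then shows by backward induction -- using directed-set arguments and Neveu's interchange of $\essinf\esssup$ with conditional expectation -- that the value of an auxiliary game restarted at each $T_{n-1}$ obeys the same recursion, and verifies optimality through discrete martingale/supermartingale/submartingale properties of the stopped value process. You instead run a one-shot continuous-time verification: It\^o's formula along an arbitrary constrained stopping time plus the compensation identity $\mathbbm{E}[\int_0^{\rho\wedge T}\lambda\phi_s\,ds]=\mathbbm{E}[\sum_{T_i\leq\rho\wedge T}\phi_{T_i}]$ convert the penalty integral into a sum sampled at arrival times, after which the two saddle-point inequalities follow from pointwise sign analysis; your endpoint bookkeeping (the cancellation $(V-L)+(L-V)^+=(V-L)^+$ at $\tau$, the exact cancellation at $\sigma^*$, and the tie-break $\tau\leq\sigma$) is exactly what is needed. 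Your route is shorter and bypasses the essential-supremum interchange machinery entirely, while the paper's route buys more: it identifies $\hat q^{\lambda}_{T_{n-1}}$ as the value of the game restarted at every arrival time, making the dynamic programming structure explicit. Two technical points you should pin down: the compensation identity is usually stated for $\mathbbm{G}$-predictable integrands, whereas $L$ and $U$ are only $\mathbbm{F}$-progressively measurable, so you must invoke the independence of the Poisson process from $\mathbbm{F}$ (essentially the conditional-density computation of Lemma \ref{firstLemma}) rather than the off-the-shelf compensator theorem; and since $T$ is finite but not bounded, the uniform integrability justifying optional stopping should be extracted from the $e^{-rs}$ discounting. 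Neither is a gap in the idea.
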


\subsection{Examples}

Theorem \ref{bigTheorem} solves a wide class of problems in a
unified manner, covering from Markovian to non-Markovian situations
and from finite to infinite horizons. In the one-dimensional
homogenous Markovian setting, there usually exists a threshold
strategy. For this, we will discuss a specific convertible-bond
example in section \ref{pcb_poisson_section}. In the rest of the
section, we list several path-dependent examples, which are
difficult to dealt with under Markovian framework (at least it needs
a case-by-case study) but covered by Theorem \ref{bigTheorem}.

(i) \emph{Path-dependent payoffs $L$ and $U$.} Let $T$ be fixed so
it is a constant stopping time and $S$ be a one-dimensional positive
diffusion process adapted to $\mathbb{F}$. For $\delta>0$, consider
an Israeli option written on $S$ with maturity $T$, where the holder
may exercise to get a normal claim but the writer is punished by an
amount $\delta S$ for annulling the contract early (see
\cite{Kifer}). The payoffs $L$ and $U$ may take the form
$L_t=\max\{m,S_t^*\}$ and $U_t=\max\{m,S^*_t\}+\delta S_t$ for
$m>S_0$ and $S^{*}_t=\sup_{0\leq u\leq t}S_u$. This is so called
Israeli Russian option. For $L_t=\int_0^tS_udu$ and
$U_t=\int_0^tS_udu+\delta S_t$, it is called Israeli integral option
(see \cite{Baurdoux}). Under mild integrability assumption on $S$ as
in Assumption \ref{assumption_1}, Theorem \ref{bigTheorem} shows
that the values of both Israeli options exist and the associated
optimal strategies can be characterized via the solution to
(\ref{final_game_value}).

(ii) \emph{Path-dependent stopping time $T$.} Stopping times are
widely used in insurance as indicators of a variety of risks. Let
$S$ be a one-dimensional positive diffusion process adapted to
$\mathbb{F}$. We may consider the following stopping times as the
terminal time of the game: drawdown stopping time $T=\inf\{t\geq 0:
S_t^*-S_t\geq m\}$ for $m\geq 0$; occupation stopping time
$T=\inf\{t\geq m: \int_0^t1_{\{S_u\in A\}}du\geq m\}$ for
$A\subset\mathbb{R}_+$. Note that unlike the standard
first-passage-time (see $\theta^{\lambda}$ in section
\ref{pcb_poisson_section}), both types of path-dependent stopping
times need tailor-made analysis under Markovian framework, but can
be covered by Theorem \ref{bigTheorem} in a unified manner.

\section{Proof of Theorem \ref{bigTheorem}}
We first give an equivalent formulation of the constrained Dynkin
game (\ref{upperValues})-(\ref{lowerValues}). Given the arrival time
$T_i$, define pre-$T_i$ $\sigma$-field
\[\mathcal{G}_{T_i}=\left\{A\in \bigvee_{s\geq 0}\mathcal{G}_s:A\cap \{T_i\leq s\}\in \mathcal{G}_s\mbox{ for }s\geq 0\right\}\]
and $\tilde{\mathbbm{G}}=\{\mathcal{G}_{T_i}\}_{i\geq 0}$. It is
obvious that the upper and lower values of the constrained Dynkin
game can be rewritten as
\begin{equation}
\label{upperValues2} \overline{v}^{\lambda}=\inf_{N^{\sigma}\in
\mathcal{N}_1(\lambda)}\sup_{N^{\tau}\in
\mathcal{N}_1(\lambda)}\mathbbm{E}\left[R(T_{N^{\sigma}},T_{N^{\tau}})\right],
\end{equation}
\begin{equation}
\label{lowerValues2} \underline{v}^{\lambda}=\sup_{N^{\tau}\in
\mathcal{N}_1(\lambda)}\inf_{N^{\sigma}\in
\mathcal{N}_1(\lambda)}\mathbbm{E}\left[R(T_{N^{\sigma}},T_{N^{\tau}})\right],
\end{equation}
where
\[\mathcal{N}_n(\lambda)=\left\{\tilde{\mathbbm{G}}\mbox{-stopping time }N\mbox{ for }n\leq N(\omega)\leq M(\omega)\right\}.\]
The subscript $n$ in $\mathcal{N}_n(\lambda)$ represents the
smallest stopping time that is allowed to choose, and $\lambda$
represents the intensity of the underlying filtration
$\tilde{\mathbbm{G}}$. Both players are allowed to stop at a
sequence of integers $n,n+1,\cdots,M$.

We also observe that a pair of processes
$\left(V^{\lambda},Z^{\lambda}\right)$ solve
(\ref{final_game_value}), if and only if the corresponding
discounted processes
$(Q^{\lambda}_t,\tilde{Z}^{\lambda}_t)=(e^{-rt}V^{\lambda}_t,e^{-rt}Z^{\lambda}_t)$,
for $t\in[0,T]$, solve the following BSDE
\begin{equation}
\label{final_game_value_discounted} Q_{t\wedge
T}^{\lambda}=\tilde{\xi}+\int_{t\wedge T}^T\left[\tilde{f}_s+\lambda
\left(\tilde{L}_s-Q_s^{\lambda}\right)^+-\lambda
\left(Q_s^{\lambda}-\tilde{U}_s\right)^+\right]ds-\int_{t\wedge T}^T
\tilde{Z}_s^{\lambda}\,dW_s,
\end{equation}
where $\tilde{\xi}=e^{-rT}\xi$ and $\tilde{\phi}_s=e^{-rs}\phi_s$
for $\phi=f,L,U$.

Thus, to prove Theorem \ref{bigTheorem}, it is equivalent to show
that $Q_0^{\lambda}=\overline{q}^{\lambda}=\underline{q}^{\lambda}$,
where
\begin{equation}
\label{upperValues_discounted}
\overline{q}^{\lambda}:=\inf_{N^{\sigma}\in
\mathcal{N}_1(\lambda)}\sup_{N^{\tau}\in
\mathcal{N}_1(\lambda)}\mathbbm{E}\left[\tilde{R}\left(T_{N^{\sigma}},T_{N^{\tau}}\right)\right],
\end{equation}
\begin{equation}
\label{lowerValues_discounted}
\underline{q}^{\lambda}:=\sup_{N^{\tau}\in
\mathcal{N}_1(\lambda)}\inf_{N^{\sigma}\in
\mathcal{N}_1(\lambda)}\mathbbm{E}\left[\tilde{R}\left(T_{N^{\sigma}},T_{N^{\tau}}\right)\right],
\end{equation}
with
\[\tilde{R}(\sigma,\tau)=\int_0^{\sigma\wedge\tau\wedge T}\tilde{f}_s\,ds+\tilde{\xi}\mathbbm{1}_{\{\sigma\wedge \tau\geq T\}}+\tilde{L}_{\tau}\mathbbm{1}_{\{\tau<T,\tau\leq\sigma\}}+\tilde{U}_{\sigma}\mathbbm{1}_{\{\sigma<T,\sigma< \tau\}},\]
and the optimal stopping strategy is given by
\begin{equation}
\label{optimalStop_discounted} \left\{\begin{array}{l}
N^{\sigma,*}_1=\inf\{N\geq 1:Q^{\lambda}_{T_N}\geq \tilde{U}_{T_N}\}\wedge M,\\
N^{\tau,*}_1=\inf\{N\geq 1:Q^{\lambda}_{T_N}\leq
\tilde{L}_{T_N}\}\wedge M.
\end{array}\right.
\end{equation}

To prove the above assertions, we start with the following lemma.

\begin{lemma}
\label{firstLemma} Suppose that Assumption \ref{assumption_1} holds.
Then, for any $1\leq n \leq M$, the solution of BSDE
(\ref{final_game_value_discounted}) at time $T_{n-1}$ is the unique
solution of the recursive equation
\begin{align}\label{recursiveEq}
&Q_{T_{n-1}}^{\lambda}=\mathbbm{E}\left[\int_{T_{n-1}}^{T_n\wedge T}\tilde{f}_s\,ds+\tilde{\xi}\mathbbm{1}_{\{T_n>T\}}\right.\\
&+\left.\left.\left(\mathbbm{1}_{\{Q_{T_n}^{\lambda}\geq
\tilde{U}_{T_n}\}}\tilde{U}_{T_n}+\mathbbm{1}_{\{Q_{T_n}^{\lambda}\leq
\tilde{L}_{T_n}\}}\tilde{L}_{T_n}+\mathbbm{1}_{\{\tilde{L}_{T_n}<Q_{T_n}^{\lambda}<\tilde{U}_{T_n}\}}Q_{T_n}^{\lambda}\right)\mathbbm{1}_{\{T_n\leq
T\}}\right|\mathcal{G}_{T_{n-1}}\right].\notag
\end{align}
\end{lemma}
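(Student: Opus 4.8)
The strategy is to connect the continuous-time penalized BSDE (\ref{final_game_value_discounted}) with the discrete recursion (\ref{recursiveEq}) by integrating out the exponentially distributed waiting time $T_n-T_{n-1}$ to the next Poisson arrival, exploiting that the Poisson process is independent of $W$ and that the solution $(Q^{\lambda},\tilde{Z}^{\lambda})$, together with the data $\tilde{f},\tilde{L},\tilde{U},\tilde{\xi}$ and the $\mathbb{F}$-stopping time $T$, are all $\mathbb{F}$-measurable. The algebraic observation driving the whole argument is that the bracketed terminal term in (\ref{recursiveEq}) is exactly the projection of $Q^{\lambda}_{T_n}$ onto $[\tilde{L}_{T_n},\tilde{U}_{T_n}]$, and this projection satisfies
\begin{equation}
\label{projIdentity}
\mathbbm{1}_{\{Q^{\lambda}_{T_n}\geq\tilde{U}_{T_n}\}}\tilde{U}_{T_n}+\mathbbm{1}_{\{Q^{\lambda}_{T_n}\leq\tilde{L}_{T_n}\}}\tilde{L}_{T_n}+\mathbbm{1}_{\{\tilde{L}_{T_n}<Q^{\lambda}_{T_n}<\tilde{U}_{T_n}\}}Q^{\lambda}_{T_n}=Q^{\lambda}_{T_n}+\bigl(\tilde{L}_{T_n}-Q^{\lambda}_{T_n}\bigr)^+-\bigl(Q^{\lambda}_{T_n}-\tilde{U}_{T_n}\bigr)^+,
\end{equation}
which is precisely the combination of penalization terms appearing in the driver of (\ref{final_game_value_discounted}).

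First I would evaluate the right-hand side of (\ref{recursiveEq}). Conditionally on $\mathcal{G}_{T_{n-1}}$ the increment $T_n-T_{n-1}$ is $\text{Exp}(\lambda)$ and independent of $\mathcal{F}_{\infty}$, so after further conditioning on $\mathcal{F}_{\infty}\vee\mathcal{G}_{T_{n-1}}$ the arrival $T_n$ has survival function $e^{-\lambda(s-T_{n-1})}$ and density $\lambda e^{-\lambda(s-T_{n-1})}$ on $\{s\geq T_{n-1}\}$. Using Fubini and the tower property, this converts the three pieces of the right-hand side of (\ref{recursiveEq}) as follows: the running integral $\int_{T_{n-1}}^{T_n\wedge T}\tilde{f}_s\,ds$ picks up the survival weight $e^{-\lambda(s-T_{n-1})}$; the terminal payoff $\tilde{\xi}\mathbbm{1}_{\{T_n>T\}}$ becomes $e^{-\lambda(T-T_{n-1})}\tilde{\xi}$; and by (\ref{projIdentity}) the projected term weighted by $\mathbbm{1}_{\{T_n\leq T\}}$ becomes $\int_{T_{n-1}}^{T}\lambda e^{-\lambda(s-T_{n-1})}[Q^{\lambda}_s+(\tilde{L}_s-Q^{\lambda}_s)^+-(Q^{\lambda}_s-\tilde{U}_s)^+]\,ds$. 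Collecting these, the right-hand side of (\ref{recursiveEq}) equals
\begin{equation}
\label{rhsCollected}
\mathbbm{E}\Bigl[e^{-\lambda(T-T_{n-1})}\tilde{\xi}+\int_{T_{n-1}}^{T}e^{-\lambda(s-T_{n-1})}\Bigl(\tilde{f}_s+\lambda\bigl(\tilde{L}_s-Q^{\lambda}_s\bigr)^+-\lambda\bigl(Q^{\lambda}_s-\tilde{U}_s\bigr)^++\lambda Q^{\lambda}_s\Bigr)\,ds\,\Big|\,\mathcal{G}_{T_{n-1}}\Bigr].
\end{equation}

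Next I would show that $Q^{\lambda}_{T_{n-1}}$ equals the same quantity by applying integration by parts to $e^{-\lambda(s-T_{n-1})}Q^{\lambda}_s$ on $[T_{n-1},T]$. Writing $\beta_s=e^{-\lambda(s-T_{n-1})}$ and using the dynamics $dQ^{\lambda}_s=-[\tilde{f}_s+\lambda(\tilde{L}_s-Q^{\lambda}_s)^+-\lambda(Q^{\lambda}_s-\tilde{U}_s)^+]\,ds+\tilde{Z}^{\lambda}_s\,dW_s$ from (\ref{final_game_value_discounted}), the term $d\beta_s=-\lambda\beta_s\,ds$ produces exactly the extra $\lambda Q^{\lambda}_s$ in (\ref{rhsCollected}); integrating from $T_{n-1}$ to $T$, using $\beta_{T_{n-1}}=1$, $\beta_T=e^{-\lambda(T-T_{n-1})}$ and $Q^{\lambda}_T=\tilde{\xi}$, and rearranging yields
\begin{equation}
\label{ibpIdentity}
Q^{\lambda}_{T_{n-1}}=e^{-\lambda(T-T_{n-1})}\tilde{\xi}+\int_{T_{n-1}}^{T}\beta_s\Bigl(\tilde{f}_s+\lambda(\tilde{L}_s-Q^{\lambda}_s)^+-\lambda(Q^{\lambda}_s-\tilde{U}_s)^++\lambda Q^{\lambda}_s\Bigr)\,ds-\int_{T_{n-1}}^{T}\beta_s\tilde{Z}^{\lambda}_s\,dW_s.
\end{equation}
Since $Q^{\lambda}_{T_{n-1}}$ is $\mathcal{G}_{T_{n-1}}$-measurable, taking $\mathbbm{E}[\,\cdot\,|\,\mathcal{G}_{T_{n-1}}]$ in (\ref{ibpIdentity}) reproduces (\ref{rhsCollected}) provided the conditional expectation of the stochastic integral vanishes, which establishes the claim; uniqueness is then immediate, as (\ref{recursiveEq}) is an explicit conditional expectation determining $Q^{\lambda}_{T_{n-1}}$ from $Q^{\lambda}_{T_n}$.

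The hard part will be the measure-theoretic bookkeeping around the mixed filtration $\mathbb{G}$. Two points require care. First, the conditional-density computation must be justified by conditioning on $\mathcal{F}_{\infty}\vee\mathcal{G}_{T_{n-1}}$ and invoking the independence of the Poisson process and $W$, so that $T_n-T_{n-1}$ remains $\text{Exp}(\lambda)$ even after conditioning on the $\mathbb{F}$-data and on the past arrivals; the $\mathbb{F}$-measurability of $Q^{\lambda},\tilde{f},\tilde{L},\tilde{U},\tilde{\xi}$ and of $T$ is exactly what legitimises this. Second, the vanishing of $\mathbbm{E}[\int_{T_{n-1}}^{T}\beta_s\tilde{Z}^{\lambda}_s\,dW_s\mid\mathcal{G}_{T_{n-1}}]$ needs attention because $T_{n-1}$ is only a $\mathbb{G}$-stopping time carrying Poisson information; here one uses that, conditionally on $\{T_{n-1}=t\}$, the increments $W_{t+\cdot}-W_t$ form a Brownian motion independent of $\mathcal{F}_t$ and of the Poisson process, together with the boundedness of $\beta$ and $Q^{\lambda}$ and the local square-integrability of $\tilde{Z}^{\lambda}$, to conclude (after localisation if necessary) that the integral is a genuine martingale increment with zero conditional mean. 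Once these two independence arguments are in place, the identification of (\ref{rhsCollected}) with the conditional expectation of (\ref{ibpIdentity}) is routine.
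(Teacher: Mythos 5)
Your proposal is correct and follows essentially the same route as the paper: the paper likewise computes the right-hand side of (\ref{recursiveEq}) via the conditional exponential density of $T_n$ and Fubini, and obtains the left-hand side by applying It\^{o}'s formula to $\alpha_t Q^{\lambda}_t$ with $\alpha_t$ solving the dual equation $\alpha_t=1-\int_0^t\lambda\alpha_s\,ds$, which is exactly your integration by parts against $e^{-\lambda(s-T_{n-1})}$. Your explicit statement of the projection identity and of the independence/martingale bookkeeping only makes precise what the paper leaves implicit.
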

\begin{proof}
Applying It\^{o}'s formula to $\alpha_tQ_t^{\lambda}$, where
$\alpha_t=e^{-\lambda t}$, we obtain, for $t\in[0,T]$,
\[\alpha_t Q_t^{\lambda}=\alpha_T Q_T^{\lambda}+\int_t^T \alpha_s\left[\tilde{f}_s+\lambda F_s(Q_s^{\lambda})\right]\,ds-\int_t^T \alpha_s
\tilde{Z}_s^{\lambda}\,dW_s,\] where
$F_s(Q_s^{\lambda}):=Q_s^{\lambda}+(\tilde{L}_s-Q_s^{\lambda})^+-(Q_s^{\lambda}-\tilde{U}_s)^+$.
Consequently,
\begin{eqnarray*}
&&Q_{T_{n-1}}^{\lambda}\\
&=&\frac{\alpha_T}{\alpha_{T_{n-1}}}\tilde{\xi}+\int_{T_{n-1}}^T
\frac{\alpha_s}{\alpha_{T_{n-1}}}\left[\tilde{f}_s+\lambda
F_s(Q_s^{\lambda})\right]\,ds-\int_{T_{n-1}}^T \frac{\alpha_s}{\alpha_{T_{n-1}}}\tilde{Z}_s^{\lambda}\,dW_s\\
&=&\mathbbm{E}\left[\left.e^{-\lambda
(T-T_{n-1})}\tilde{\xi}+\int_{T_{n-1}}^T e^{-\lambda
(s-T_{n-1})}\left[\tilde{f}_s+\lambda
F_s(Q_s^{\lambda})\right]\,ds\right|\mathcal{G}_{T_{n-1}}\right].
\end{eqnarray*}

On the other hand, we use the conditional density $\lambda
e^{-\lambda (x-T_{n-1})}\,dx$ of $T_n$ to calculate the right-hand
side of (\ref{recursiveEq}):
\begin{eqnarray*}
&&\mathbbm{E}\left[\left.\int_{T_{n-1}}^{T_n\wedge T}\tilde{f}_s\,ds\right|\mathcal{G}_{T_{n-1}}\right]\\
&=&\mathbbm{E}\left[\left.e^{-\lambda (T-{T_{n-1}})}\int_{T_{n-1}}^T\tilde{f}_s\,ds+\int_{T_{n-1}}^T\lambda e^{-\lambda (x-{T_{n-1}})}\int_{T_{n-1}}^x\tilde{f}_s\,ds\,dx\right|\mathcal{G}_{T_{n-1}}\right]\\
&=&\mathbbm{E}\left[\left.e^{-\lambda (T-{T_{n-1}})}\int_{T_{n-1}}^T\tilde{f}_s\,ds+\int_{T_{n-1}}^T\tilde{f}_s\int_s^T\lambda e^{-\lambda (x-{T_{n-1}})}\,dx\,ds\right|\mathcal{G}_{T_{n-1}}\right]\\
&=&\mathbbm{E}\left[\left.\int_{T_{n-1}}^Te^{-\lambda(s-{T_{n-1}})}\tilde{f}_s\,ds\right|\mathcal{G}_{T_{n-1}}\right],
\end{eqnarray*}
where we used integration by parts in the second equality.
Similarly, we have
$$\mathbbm{E}\left[\left.\tilde{\xi}\mathbbm{1}_{\{T_n>T\}}\right|\mathcal{G}_{T_{n-1}}\right]=\mathbbm{E}\left[\left.e^{-\lambda
(T-T_{n-1})}\tilde{\xi}\right|\mathcal{G}_{T_{n-1}}\right],$$ and
\begin{align*}
&\mathbbm{E}\left[\left.\left(\mathbbm{1}_{\{Q_{T_n}^{\lambda}\geq \tilde{U}_{T_n}\}}\tilde{U}_{T_n}+\mathbbm{1}_{\{Q_{T_n}^{\lambda}\leq \tilde{L}_{T_n}\}}\tilde{L}_{T_n}+\mathbbm{1}_{\{\tilde{L}_{T_n}<Q_{T_n}^{\lambda}<\tilde{U}_{T_n}\}}Q_{T_n}^{\lambda}\right)\mathbbm{1}_{\{T_n\leq T\}}\right|\mathcal{G}_{T_{n-1}}\right]\\
=&\ \mathbbm{E}\left[\left.\int_{T_{n-1}}^T\lambda e^{-\lambda
(s-T_{n-1})}\left( \mathbbm{1}_{\{Q_s^{\lambda}\geq
\tilde{U}_s\}}\tilde{U}_s+\mathbbm{1}_{\{Q_s^{\lambda}\leq
\tilde{L}_s\}}\tilde{L}_s+\mathbbm{1}_{\{\tilde{L}_s<Q_s^{\lambda}<\tilde{U}_s\}}Q_s^{\lambda}\right)\,ds\right|\mathcal{G}_{T_{n-1}}\right].
\end{align*}
It follows that (\ref{recursiveEq}) holds. Since the recursive
equation (\ref{recursiveEq}) obviously admits a unique solution,
$Q_{T_{n-1}}^{\lambda}$ is then the unique solution of
(\ref{recursiveEq}) for $1\leq n\leq M$.
\end{proof}

As a direct consequence of Lemma \ref{firstLemma}, if we define
$\hat{Q}^{\lambda}=\min\{\tilde{U},\max\{Q^{\lambda},\tilde{L}\}\},$
then by the assumption $L\leq U$ (so $\tilde{L}\leq \tilde{U}$),
\[\hat{Q}^{\lambda}=
\mathbbm{1}_{\{Q^{\lambda}\geq
\tilde{U}\}}\tilde{U}+\mathbbm{1}_{\{Q^{\lambda}\leq
\tilde{L}\}}\tilde{L}+\mathbbm{1}_{\{\tilde{L}<Q^{\lambda}<\tilde{U}\}}Q^{\lambda},\]
and thus, $\hat{Q}^{\lambda}$ satisfies the following recursive
equation: For $1\leq n\leq M$,
\begin{align}
\label{recursiveEq2}
&\ \hat{Q}_{T_{n-1}}^{\lambda}\\
=&\
\min\left\{\tilde{U}_{T_{n-1}},\max\left\{\mathbbm{E}\left[\left.\int_{T_{n-1}}^{T_n\wedge
T}\tilde{f}_s\,ds+\tilde{\xi}\mathbbm{1}_{\{T_n>T\}}+\hat{Q}_{T_{n}}^{\lambda}\mathbbm{1}_{\{T_n\leq
T\}}\right|\mathcal{G}_{T_{n-1}}\right],\tilde{L}_{T_{n-1}}\right\}\right\},\notag
\end{align}
which also admits a unique solution since we can calculate its
solution backwards in a recursive way. We show that
$\hat{Q}^{\lambda}_{T_{n-1}}$ is the value of another constrained
Dynkin game. Introduce the upper and lower values of an auxiliary
constrained Dynkin game as
\begin{equation}
\label{aux_upperValues2}
\overline{\hat{q}}_{T_{n-1}}^{\lambda}=\essinf_{N^{\sigma}\in
\mathcal{N}_{n-1}(\lambda)}\esssup_{N^{\tau}\in
\mathcal{N}_{n-1}(\lambda)}\mathbbm{E}\left[\tilde{R}_{n-1}(T_{N^{\sigma}},T_{N^{\tau}})|\mathcal{G}_{T_{n-1}}\right],
\end{equation}
\begin{equation}
\label{aux_lowerValues2}
\underline{\hat{q}}_{T_{n-1}}^{\lambda}=\esssup_{N^{\tau}\in
\mathcal{N}_{n-1}(\lambda)}\essinf_{N^{\sigma}\in
\mathcal{N}_{n-1}(\lambda)}\mathbbm{E}\left[\tilde{R}_{n-1}(T_{N^{\sigma}},T_{N^{\tau}})|\mathcal{G}_{T_{n-1}}\right],
\end{equation}
where
\[\tilde{R}_{n-1}(\sigma,\tau)=\int_{T_{n-1}\wedge T}^{\sigma\wedge\tau\wedge T}\tilde{f}_s\,ds+\tilde{\xi}\mathbbm{1}_{\{\sigma\wedge \tau\geq T\}}+\tilde{L}_{\tau}\mathbbm{1}_{\{\tau<T,\tau\leq\sigma\}}+\tilde{U}_{\sigma}\mathbbm{1}_{\{\sigma<T,\sigma< \tau\}}\]
with $\tilde{R}_{0}(\sigma,\tau)=\tilde{R}(\sigma,\tau)$, and
\[\mathcal{N}_{n-1}(\lambda)=\left\{\tilde{\mathbbm{G}}\mbox{-stopping time }N\mbox{ for }n-1\leq N(\omega)\leq M(\omega)\right\}.\]

Note that, when $n=1$,
(\ref{aux_upperValues2})-(\ref{aux_lowerValues2}) corresponds to the
auxiliary Dynkin game (\ref{auxupperValues})-(\ref{auxlowerValues}),
which will be used to solve the original constrained Dynkin game.
The difference between the auxiliary game and the original game is
that the players first make their stopping decisions and then move
forward in the auxiliary game, while in original game they first
move forward and then make their decisions.

\begin{lemma}
\label{secondLemma} Suppose that Assumption \ref{assumption_1}
holds. Then, for any $1\leq n\leq M$, the value of the auxiliary
{constrained} Dynkin game
(\ref{aux_upperValues2})-(\ref{aux_lowerValues2}) exists. Its value,
denoted by $\hat{q}_{T_{n-1}}^{\lambda}$, satisfies the recursive
equation (\ref{recursiveEq2}), namely,
\begin{align*}
&\hat{q}_{T_{n-1}}^{\lambda}\\
=&\
\min\left\{\tilde{U}_{T_{n-1}},\max\left\{\mathbbm{E}\left[\left.\int_{T_{n-1}}^{T_n\wedge
T}\tilde{f}_s\,ds+\tilde{\xi}\mathbbm{1}_{\{T_n>T\}}+\hat{q}_{T_{n}}^{\lambda}\mathbbm{1}_{\{T_n\leq
T\}}\right|\mathcal{G}_{T_{n-1}}\right],\tilde{L}_{T_{n-1}}\right\}\right\}.
\end{align*}
Hence, $\hat{q}^{\lambda}_{T_{n-1}}=\hat{Q}^{\lambda}_{T_{n-1}}$
a.s. The optimal stopping strategy of
(\ref{aux_upperValues2})-(\ref{aux_lowerValues2}) is given by
\begin{equation}
\label{optimalStop_aux} \left\{\begin{array}{l}
\hat{N}^{\sigma,*}_{n-1}=\inf\{N\geq n-1:\hat{q}_{T_N}^{\lambda}= \tilde{U}_{T_N}\}\wedge M;\\
\hat{N}^{\tau,*}_{n-1}=\inf\{N\geq n-1:\hat{q}_{T_N}^{\lambda}=
\tilde{L}_{T_N}\}\wedge M.
\end{array}\right.
\end{equation}
\end{lemma}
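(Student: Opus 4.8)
The plan is to prove the lemma by a direct verification argument rather than by separately analysing the upper and lower values. Since it has just been established that $\hat{Q}^{\lambda}$ is the unique solution of the recursion (\ref{recursiveEq2}), I would take $\hat{q}^{\lambda}:=\hat{Q}^{\lambda}$ as the candidate for the common game value and (\ref{optimalStop_aux}) as the candidate saddle strategies, and show that this pair is indeed a saddle point. Because one always has $\underline{\hat{q}}_{T_{n-1}}^{\lambda}\leq\overline{\hat{q}}_{T_{n-1}}^{\lambda}$, it then suffices to prove the two one-sided estimates
\[
\mathbbm{E}\left[\tilde{R}_{n-1}^{\lambda}(T_{\hat{N}^{\sigma,*}_{n-1}},T_{N^{\tau}})\,\big|\,\mathcal{G}_{T_{n-1}}\right]\leq \hat{q}_{T_{n-1}}^{\lambda}\leq \mathbbm{E}\left[\tilde{R}_{n-1}^{\lambda}(T_{N^{\sigma}},T_{\hat{N}^{\tau,*}_{n-1}})\,\big|\,\mathcal{G}_{T_{n-1}}\right]
\]
for every admissible $N^{\sigma},N^{\tau}\in\mathcal{N}_{n-1}(\lambda)$, together with equality throughout when both players use (\ref{optimalStop_aux}). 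These facts sandwich $\underline{\hat{q}}^{\lambda}_{T_{n-1}}=\overline{\hat{q}}^{\lambda}_{T_{n-1}}=\hat{q}^{\lambda}_{T_{n-1}}=\hat{Q}^{\lambda}_{T_{n-1}}$ and identify the saddle, which is exactly the assertion of the lemma; note that no separate backward induction is needed, since the recursion (\ref{recursiveEq2}) already supplies $\hat{q}^{\lambda}_{T_N}$ for all $n-1\leq N\leq M$.

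The engine of the argument is the discrete-time process indexed by the Poisson arrivals,
\[
Y_N:=\int_{T_{n-1}\wedge T}^{T_N\wedge T}\tilde{f}_s\,ds+\hat{q}_{T_N}^{\lambda},\qquad n-1\leq N\leq M,
\]
with the terminal convention $\hat{q}_{T_M}^{\lambda}:=\tilde{\xi}$ (consistent with $T_M>T$). Writing $C_N$ for the conditional continuation value inside (\ref{recursiveEq2}) — computed against the conditional exponential density of $T_{N+1}$ given $\mathcal{G}_{T_N}$, exactly as in the proof of Lemma \ref{firstLemma} — the recursion reads $\hat{q}_{T_N}^{\lambda}=\min\{\tilde{U}_{T_N},\max\{C_N,\tilde{L}_{T_N}\}\}$. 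First I would extract from this single identity the two one-step properties with respect to the filtration $\{\mathcal{G}_{T_N}\}_N$: on $\{\hat{q}_{T_N}^{\lambda}<\tilde{U}_{T_N}\}$ one has $\hat{q}_{T_N}^{\lambda}=\max\{C_N,\tilde{L}_{T_N}\}\geq C_N$, hence $Y_N\geq\mathbbm{E}[Y_{N+1}\,|\,\mathcal{G}_{T_N}]$ (a supermartingale step), while on $\{\hat{q}_{T_N}^{\lambda}>\tilde{L}_{T_N}\}$ one has $\hat{q}_{T_N}^{\lambda}=\min\{\tilde{U}_{T_N},C_N\}\leq C_N$, hence $Y_N\leq\mathbbm{E}[Y_{N+1}\,|\,\mathcal{G}_{T_N}]$ (a submartingale step).

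Next I would convert these into the one-sided estimates by a pathwise comparison of the realized payoff with $Y$ stopped at $\rho:=\hat{N}^{\sigma,*}_{n-1}\wedge N^{\tau}$ (resp.\ $N^{\sigma}\wedge\hat{N}^{\tau,*}_{n-1}$). A short case analysis at the stopping index — distinguishing $\rho=M$ (payoff $\tilde{\xi}=\hat{q}_{T_\rho}^{\lambda}$), the minimizer stopping strictly first (payoff $\tilde{U}_{T_\rho}$, which equals $\hat{q}^{\lambda}_{T_\rho}$ on the set where $\hat{N}^{\sigma,*}_{n-1}$ fires), and the maximizer stopping first or simultaneously (payoff $\tilde{L}_{T_\rho}$, ties being assigned to $\tau$ per (\ref{def_payoff})) — shows, using only $\tilde{L}\leq\hat{q}^{\lambda}\leq\tilde{U}$, that $\tilde{R}_{n-1}^{\lambda}(T_{\hat{N}^{\sigma,*}_{n-1}},T_{N^{\tau}})\leq Y_\rho$ in every case. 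Since each index strictly before $\rho$ lies strictly before $\hat{N}^{\sigma,*}_{n-1}$, the supermartingale step is in force along the whole path up to $\rho$, and optional stopping yields $\mathbbm{E}[Y_\rho\,|\,\mathcal{G}_{T_{n-1}}]\leq Y_{n-1}=\hat{q}_{T_{n-1}}^{\lambda}$, i.e.\ the upper estimate. The lower estimate follows symmetrically from the submartingale step, and when both players use (\ref{optimalStop_aux}) neither barrier is touched before $\rho$, so $\tilde{L}_{T_N}<\hat{q}_{T_N}^{\lambda}<\tilde{U}_{T_N}$ forces $\hat{q}_{T_N}^{\lambda}=C_N$ (a martingale step) and $\tilde{R}_{n-1}^{\lambda}=Y_\rho$ exactly, giving equality.

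The hard part will be the rigorous optional-stopping step rather than the algebra: the index $\rho\leq M$ is almost surely finite but not bounded, so passing from the one-step (super/sub)martingale inequalities to the aggregated estimate must be justified, e.g.\ by truncating at $M\wedge k$ and invoking bounded convergence — which is legitimate because Assumption \ref{assumption_1} makes $\tilde{f},\tilde{L},\tilde{U},\tilde{\xi}$, and therefore $Y$, uniformly bounded on $[0,T]$. A secondary point requiring care is the bookkeeping of the tie-breaking convention together with the terminal behaviour at $T_M>T$ (and the almost-sure strict inequality $T_{M-1}<T<T_M$), so that the pathwise comparisons between $\tilde{R}_{n-1}^{\lambda}$ and $Y_\rho$ hold in every case and collapse to equalities under the candidate saddle. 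Once these are pinned down, the $\essinf/\esssup$ comparison and the identification $\hat{q}^{\lambda}=\hat{Q}^{\lambda}$ are immediate.
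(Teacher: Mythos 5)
Your proposal is correct, and it takes a genuinely different route from the paper. The paper proves the lemma ``forwards'': it shows by backward induction on the Poisson index that the upper value $\overline{\hat{q}}^{\lambda}$ and the lower value $\underline{\hat{q}}^{\lambda}$ each satisfy the recursion (\ref{recursiveEq2}) (its Steps 1--2), which forces it to justify interchanging $\essinf_{N^{\sigma}}\esssup_{N^{\tau}}$ with $\mathbbm{E}[\,\cdot\,|\mathcal{G}_{T_i}]$ via the increasing/decreasing directed-set property and Proposition VI-1-1 of Neveu (its Step 3); only then does it verify the saddle point through martingale, supermartingale and submartingale properties of $\hat{q}^{\lambda}$ stopped along the candidate strategies (its Step 4). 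You instead run a pure verification argument: taking the already-constructed unique solution $\hat{Q}^{\lambda}$ of (\ref{recursiveEq2}) as the candidate value, you extract the one-step super/sub/martingale properties of $Y_N$ directly from the $\min\{\tilde{U},\max\{C,\tilde{L}\}\}$ structure, compare the realized payoff pathwise with $Y_\rho$, and apply optional stopping; the sandwich with $\underline{\hat{q}}^{\lambda}\leq\overline{\hat{q}}^{\lambda}$ then yields existence of the value, the recursion, and the saddle in one stroke. Your engine is essentially the paper's Step 4 (with the running payoff folded into $Y$ rather than set to zero without loss of generality), but you correctly observe that this step alone suffices, so the interchange lemma and the explicit $\essinf$/$\esssup$ computations become unnecessary. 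What the paper's longer route buys is the dynamic programming principle for the upper and lower values themselves, i.e.\ that $\overline{\hat{q}}^{\lambda}$ and $\underline{\hat{q}}^{\lambda}$ each satisfy (\ref{recursiveEq2}) as a structural fact; what yours buys is brevity and the elimination of the measure-theoretic interchange argument, at the price of having to justify optional stopping at the a.s.\ finite but unbounded index $M$ --- which you rightly flag and which is legitimately handled by truncation at $M\wedge k$ and bounded convergence under Assumption \ref{assumption_1}.
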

\begin{proof}
Without loss of generality, we may assume $\tilde{f}_s=0$.

\noindent\emph{Step 1.} Since $T_{M-1}\leq T<T_{M}$, the upper value
of the auxiliary game (\ref{aux_upperValues2}) is equivalent to
\begin{align*}
\overline{\hat{q}}_{T_{n-1}}^{\lambda}=\essinf_{N^{\sigma}\in \mathcal{N}_{n-1}(\lambda)}\esssup_{N^{\tau}\in \mathcal{N}_{n-1}(\lambda)}\mathbbm{E}&\left[\tilde{\xi}\mathbbm{1}_{\{N^{\sigma}=N^{\tau}=M\}}+\tilde{L}_{T_{N^{\tau}}}\mathbbm{1}_{\{n-1\leq N^{\tau}\leq M-1,N^{\tau}\leq N^{\sigma}\}}\right.\\
+&\left.\tilde{U}_{T_{N^{\sigma}}}\mathbbm{1}_{\{n-1\leq
N^{\sigma}\leq M-1,N^{\sigma}<
N^{\tau}\}}|\mathcal{G}_{T_{n-1}}\right].
\end{align*}
We claim that
\begin{equation}
\label{hatvT_M-1}
\overline{\hat{q}}_{T_{M-1}}^{\lambda}=\min\left\{\tilde{U}_{T_{M-1}},\max\left\{\mathbbm{E}\left[\tilde{\xi}|\mathcal{G}_{T_{M-1}}\right],\tilde{L}_{T_{M-1}}\right\}\right\},
\end{equation}
and, for $n-1\leq i\leq M-2$,
\begin{equation}
\label{hatvT_i}
\overline{\hat{q}}_{T_i}^{\lambda}=\min\left\{\tilde{U}_{T_i},\max\left\{\mathbbm{E}\left[\overline{\hat{q}}_{T_{i+1}}^{\lambda}|\mathcal{G}_{T_i}\right],\tilde{L}_{T_i}\right\}\right\}.
\end{equation}
If (\ref{hatvT_M-1})-(\ref{hatvT_i}) hold, then
\begin{eqnarray*}
\overline{\hat{q}}_{T_{n-1}}^{\lambda}&=&\min\left\{\tilde{U}_{T_{n-1}},\max\left\{\mathbbm{E}\left[\tilde{\xi}\mathbbm{1}_{\{n=M\}}+\overline{\hat{q}}_{T_{n}}^{\lambda}\mathbbm{1}_{\{n\leq M-1\}}|\mathcal{G}_{T_{n-1}}\right],\tilde{L}_{T_{n-1}}\right\}\right\}\\
&=&\min\left\{\tilde{U}_{T_{n-1}},\max\left\{\mathbbm{E}\left[\tilde{\xi}\mathbbm{1}_{\{T_n>
T\}}+\overline{\hat{q}}_{T_{n}}^{\lambda}\mathbbm{1}_{\{T_n\leq
T\}}|\mathcal{G}_{T_{n-1}}\right],\tilde{L}_{T_{n-1}}\right\}\right\},
\end{eqnarray*}
which is the recursive equation (\ref{recursiveEq2}).

Similarly, we obtain that $\underline{\hat{q}}_{T_{n-1}}^{\lambda}$
also satisfies the recursive equation (\ref{recursiveEq2}). Since
(\ref{recursiveEq2}) admits a unique solution, it is clear that
$\overline{\hat{q}}_{T_{n-1}}^{\lambda}=\underline{\hat{q}}_{T_{n-1}}^{\lambda}=\hat{q}_{T_{n-1}}^{\lambda}=\hat{Q}^{\lambda}_{T_{n-1}}$
a.s.

\noindent\emph{Step 2.} Next, we show
(\ref{hatvT_M-1})-(\ref{hatvT_i}). Indeed, for $i=M-1$,
\begin{eqnarray*}
\overline{\hat{q}}_{T_{M-1}}^{\lambda}&=&\essinf_{N^{\sigma}\in \mathcal{N}_{M-1}(\lambda)}\esssup_{N^{\tau}\in \mathcal{N}_{M-1}(\lambda)}\mathbbm{E}\left[\tilde{\xi}\mathbbm{1}_{\{N^{\sigma}=N^{\tau}=M\}}+\tilde{L}_{T_{M-1}}\mathbbm{1}_{\{M-1=N^{\tau}\leq N^{\sigma}\}}\right.\\
&&+\left.\tilde{U}_{T_{M-1}}\mathbbm{1}_{\{M-1=N^{\sigma}< N^{\tau}\}}|\mathcal{G}_{T_{M-1}}\right]\\
&=&\min_{N^{\sigma}\in \mathcal{N}_{M-1}(\lambda)}\max_{N^{\tau}\in
\mathcal{N}_{M-1}(\lambda)}
\left\{\mathbbm{E}[\tilde{\xi}|\mathcal{G}_{T_{M-1}}]\mathbbm{1}_{\{N^{\sigma}=N^{\tau}=
M\}}+\tilde{L}_{T_{M-1}}\mathbbm{1}_{\{M-1=N^{\tau}\leq
N^{\sigma}\}}\right.\\
&&+\left.\tilde{U}_{T_{M-1}}\mathbbm{1}_{\{M-1=N^{\sigma}<N^{\tau}\}}\right\}\\
&=&\min\left\{\tilde{U}_{T_{M-1}},\max\left\{\mathbbm{E}[\tilde{\xi}|\mathcal{G}_{T_{M-1}}],\tilde{L}_{T_{M-1}}\right\}\right\}.
\end{eqnarray*}
In general, for $n-1\leq i\leq M-2$, we have
\begin{eqnarray*}
\overline{\hat{q}}_{T_i}^{\lambda}&=&\essinf_{N^{\sigma}\in \mathcal{N}_i(\lambda)}\esssup_{N^{\tau}\in \mathcal{N}_i(\lambda)}\mathbbm{E}\left[\tilde{\xi}\mathbbm{1}_{\{N^{\sigma}=N^{\tau}=M\}}+\tilde{L}_{T_{N^{\tau}}}\mathbbm{1}_{\{i\leq N^{\tau}\leq M-1,N^{\tau}\leq N^{\sigma}\}}\right.\\
&&+\left.\tilde{U}_{T_{N^{\sigma}}}\mathbbm{1}_{\{i\leq
N^{\sigma}\leq M-1,N^{\sigma}< N^{\tau}\}}|\mathcal{G}_{T_i}\right].
\end{eqnarray*}
Taking conditional expectation on $\mathcal{G}_{T_{i+1}}$ further
yields
\begin{eqnarray*}
\overline{\hat{q}}_{T_i}^{\lambda}&=&\essinf_{N^{\sigma}\in \mathcal{N}_i(\lambda)}\esssup_{N^{\tau}\in \mathcal{N}_i(\lambda)}\mathbbm{E}\left[\tilde{L}_{T_i}\mathbbm{1}_{\{i=N^{\tau}\leq N^{\sigma}\}}+\tilde{U}_{T_i}\mathbbm{1}_{\{i=N^{\sigma}< N^{\tau}\}}\right.\\
&&+\ \mathbbm{E}\left[\tilde{\xi}\mathbbm{1}_{\{N^{\sigma}=N^{\tau}=M\}}+\tilde{L}_{T_{N^{\tau}}}\mathbbm{1}_{\{i+1\leq N^{\tau}\leq M-1,N^{\tau}\leq N^{\sigma}\}}\right.\\
&&+\left.\left.\tilde{U}_{T_{N^{\sigma}}}\mathbbm{1}_{\{i+1\leq N^{\sigma}\leq M-1,N^{\sigma}< N^{\tau}\}}|\mathcal{G}_{T_{i+1}}\right]|\mathcal{G}_{T_i}\right]\\
&=&\min\left\{\tilde{U}_{T_i},\max\left\{\mathbbm{E}\left[\overline{\hat{q}}_{T_{i+1}}^{\lambda}|\mathcal{G}_{T_i}\right],\tilde{L}_{T_i}\right\}\right\},
\end{eqnarray*}
where the second equality holds since the operations
$\essinf_{N^{\sigma}\in
\mathcal{N}_{i+1}(\lambda)}\esssup_{N^{\tau}\in
\mathcal{N}_{i+1}(\lambda)}$ and
$\mathbbm{E}\left[\cdot|\mathcal{G}_{T_i}\right]$ are
interchangeable, which will be proved in the next step.

\noindent\emph{Step 3.} In this step, we show the operations
$\essinf_{N^{\sigma}\in
\mathcal{N}_{i+1}(\lambda)}\esssup_{N^{\tau}\in
\mathcal{N}_{i+1}(\lambda)}$ and
$\mathbbm{E}\left[\cdot|\mathcal{G}_{T_i}\right]$ are
interchangeable, i.e. (\ref{interchange}) below holds. To this end,
for fixed $i$ and $N^{\sigma}\in\mathcal{N}_i(\lambda)$, we note
that the family
\begin{equation}
\label{increasing_directed}
\left(\mathbbm{E}\left[\tilde{R}_i(T_{N^{\sigma}},T_{N^{\tau}})|\mathcal{G}_{T_i}\right],N^{\tau}\in\mathcal{N}_i(\lambda)\right)
\end{equation}
is an increasing directed set. Indeed, if we choose arbitrary
$N^{\tau}_1,N^{\tau}_2\in\mathcal{N}_i(\lambda)$ and let
$X_j=\mathbbm{E}\left[\tilde{R}_i(T_{N^{\sigma}},T_{N^{\tau}_j})|\mathcal{G}_{T_i}\right],$
for $j=1,2$. Then, defining the stopping time $N^{\tau}$ as
$N^{\tau}=N^{\tau}_1\mathbbm{1}_{\{X_1\geq
X_2\}}+N^{\tau}_2\mathbbm{1}_{\{X_1<X_2\}},$ we have
$N^{\tau}\in\mathcal{N}_i(\lambda)$ and
$\mathbbm{E}\left[\tilde{R}_i(T_{N^{\sigma}},T_{N^{\tau}})|\mathcal{G}_{T_i}\right]\geq
\max\{X_1,X_2\}$.

Similarly, we also have, for fixed $i$, the family
\begin{equation}
\label{decreasing_directed} \left(\esssup_{N^{\tau}\in
\mathcal{N}_i(\lambda)}\mathbbm{E}\left[\tilde{R}_i(T_{N^{\sigma}},T_{N^{\tau}})|\mathcal{G}_{T_i}\right],N^{\sigma}\in\mathcal{N}_i(\lambda)\right)
\end{equation}
is a decreasing directed set. Under Assumption \ref{assumption_1},
it is obvious that both (\ref{increasing_directed}) and
(\ref{decreasing_directed}) are uniformly integrable. Therefore, by
Proposition VI-1-1 of Neveu \cite{neveu1975discrete}, we obtain
\begin{eqnarray}\label{interchange}
\mathbbm{E}\left[\left.\overline{\hat{q}}_{T_{i+1}}^{\lambda}\right|\mathcal{G}_{T_i}\right]&=&\mathbbm{E}\left[\left.\essinf_{N^{\sigma}\in \mathcal{N}_{i+1}(\lambda)}\esssup_{N^{\tau}\in \mathcal{N}_{i+1}(\lambda)}\mathbbm{E}\left[\tilde{R}_{i+1}(T_{N^{\sigma}},T_{N^{\tau}})|\mathcal{G}_{T_{i+1}}\right]\right|\mathcal{G}_{T_i}\right]\notag\\
&=&{\essinf_{N^{\sigma}\in \mathcal{N}_{i+1}(\lambda)}\mathbbm{E}\left[\left.\esssup_{N^{\tau}\in \mathcal{N}_{i+1}(\lambda)}\mathbbm{E}\left[\tilde{R}_{i+1}(T_{N^{\sigma}},T_{N^{\tau}})|\mathcal{G}_{T_{i+1}}\right]\right|\mathcal{G}_{T_i}\right]}\notag\\
&=&\essinf_{N^{\sigma}\in
\mathcal{N}_{i+1}(\lambda)}\esssup_{N^{\tau}\in\mathcal{N}_{i+1}(\lambda)}\mathbbm{E}\left[\tilde{R}_{i+1}(T_{N^{\sigma}},T_{N^{\tau}})|\mathcal{G}_{T_i}\right].
\end{eqnarray}

\noindent\emph{Step 4.} It remains to prove that
$\left(\hat{N}^{\sigma,*}_{n-1},\hat{N}^{\tau,*}_{n-1}\right)$ in
(\ref{optimalStop_aux}) are indeed the optimal stopping times for
the auxiliary Dynkin game
(\ref{aux_upperValues2})-(\ref{aux_lowerValues2}), i.e. for every
$(N^{\sigma},N^{\tau})\in\mathcal{N}_{n-1}(\lambda)\times
\mathcal{N}_{n-1}(\lambda)$,
\begin{eqnarray*}
\mathbbm{E}\left[\tilde{R}_{n-1}\left(T_{\hat{N}^{\sigma,*}_{n-1}},T_{N^{\tau}}\right)|\mathcal{G}_{T_{n-1}}\right]&\leq & \mathbbm{E}\left[\tilde{R}_{n-1}\left(T_{\hat{N}^{\sigma,*}_{n-1}},T_{\hat{N}^{\tau,*}_{n-1}}\right)|\mathcal{G}_{T_{n-1}}\right]\label{step4_optimal_1}\\
&\leq &
\mathbbm{E}\left[\tilde{R}_{n-1}\left(T_{N^{\sigma}},T_{\hat{N}^{\tau,*}_{n-1}}\right)|\mathcal{G}_{T_{n-1}}\right].\label{step4_optimal_2}
\end{eqnarray*}

To this end, it suffices to prove that

\noindent(i) $\left(\hat{q}_{T_{m\wedge
\hat{N}^{\sigma,*}_{n-1}\wedge
\hat{N}^{\tau,*}_{n-1}}}^{\lambda}\right)_{m\geq n-1}$ is a
$\tilde{\mathbbm{G}}$-martingale;

\noindent(ii) $\left(\hat{q}_{T_{m\wedge
\hat{N}^{\sigma,*}_{n-1}\wedge N^{\tau}}}^{\lambda}\right)_{m\geq
n-1}$ is a $\tilde{\mathbbm{G}}$-supermartingale for any
$N^{\tau}\in\mathcal{N}_{n-1}(\lambda)$;

\noindent(iii) $\left(\hat{q}_{T_{m\wedge N^{\sigma}\wedge
\hat{N}^{\tau,*}_{n-1}}}^{\lambda}\right)_{m\geq n-1}$ is a
$\tilde{\mathbbm{G}}$-submartingale for any
$N^{\sigma}\in\mathcal{N}_{n-1}(\lambda)$.

Indeed, we have
\begin{align*}
&\mathbbm{E}\left[\left.\hat{q}_{T_{(m+1)\wedge \hat{N}^{\sigma,*}_{n-1}\wedge \hat{N}^{\tau,*}_{n-1}}}^{\lambda}\right|\mathcal{G}_{T_m}\right]\\
=\ &\mathbbm{E}\left[\left.\left(\sum_{j=n-1}^m\mathbbm{1}_{\{\hat{N}^{\sigma,*}_{n-1}\wedge \hat{N}^{\tau,*}_{n-1}=j\}}+\mathbbm{1}_{\{\hat{N}^{\sigma,*}_{n-1}\wedge \hat{N}^{\tau,*}_{n-1}\geq m+1\}}\right)\hat{q}_{T_{(m+1)\wedge \hat{N}^{\sigma,*}_{n-1}\wedge \hat{N}^{\tau,*}_{n-1}}}^{\lambda}\right|\mathcal{G}_{T_m}\right]\\
=&\sum_{j=n-1}^m\mathbbm{1}_{\{\hat{N}^{\sigma,*}_{n-1}\wedge \hat{N}^{\tau,*}_{n-1}=j\}}\hat{q}_{T_j}^{\lambda}+\mathbbm{1}_{\{\hat{N}^{\sigma,*}_{n-1}\wedge \hat{N}^{\tau,*}_{n-1}\geq m+1\}}\mathbbm{E}\left[\left.\hat{q}_{T_{m+1}}^{\lambda}\right|\mathcal{G}_{T_m}\right]\\
=&\sum_{j=n-1}^m\mathbbm{1}_{\{\hat{N}^{\sigma,*}_{n-1}\wedge
\hat{N}^{\tau,*}_{n-1}=j\}}\hat{q}_{T_j}^{\lambda}+\mathbbm{1}_{\{\hat{N}^{\sigma,*}_{n-1}\wedge
\hat{N}^{\tau,*}_{n-1}\geq
m+1\}}\hat{q}_{T_{m}}^{\lambda}=\hat{q}_{T_{m\wedge
\hat{N}^{\sigma,*}_{n-1}\wedge \hat{N}^{\tau,*}_{n-1}}}^{\lambda},
\end{align*}
where the second last equality follows from the definition of
$\left(\hat{N}^{\sigma,*}_{n-1},\hat{N}^{\tau,*}_{n-1}\right)$ in
(\ref{optimalStop_aux}), so the martingale property (i) has been
proved.

To prove the supermartingale property (ii), we note that
\begin{align*}
&\ \mathbbm{E}\left[\left.\hat{q}_{T_{(m+1)\wedge \hat{N}^{\sigma,*}_{n-1}\wedge N^{\tau}}}^{\lambda}\right|\mathcal{G}_{T_m}\right]\\
=&\ \mathbbm{E}\left[\left.\hat{q}_{T_{(m+1)\wedge \hat{N}^{\sigma,*}_{n-1}}}^{\lambda}\mathbbm{1}_{\{N^{\tau}\geq m+1\}}+\hat{q}_{T_{\hat{N}^{\sigma,*}_{n-1}\wedge N^{\tau}}}^{\lambda}\mathbbm{1}_{\{N^{\tau}\leq m\}}\right|\mathcal{G}_{T_m}\right]\\
=&\
\mathbbm{E}\left[\left.\left(\sum_{j=n-1}^m\mathbbm{1}_{\{\hat{N}^{\sigma,*}_{n-1}=j\}}+\mathbbm{1}_{\{\hat{N}^{\sigma,*}_{n-1}\geq
m+1\}}\right)\hat{q}_{T_{(m+1)\wedge
\hat{N}^{\sigma,*}_{n-1}}}^{\lambda}\mathbbm{1}_{\{N^{\tau}\geq
m+1\}}\right.\right.\\
&\left.\left.\ +\ \hat{q}_{T_{\hat{N}^{\sigma,*}_{n-1}\wedge N^{\tau}}}^{\lambda}\mathbbm{1}_{\{N^{\tau}\leq m\}}\right|\mathcal{G}_{T_m}\right]\\
=&\left(\sum_{j=n-1}^m\mathbbm{1}_{\{\hat{N}^{\sigma,*}_{n-1}=j\}}\hat{q}_{T_{j}}^{\lambda}+\mathbbm{1}_{\{\hat{N}^{\sigma,*}_{n-1}\geq
m+1\}}\mathbbm{E}\left[\left.\hat{q}_{T_{m+1}}^{\lambda}\right|\mathcal{G}_{T_m}\right]\right)\mathbbm{1}_{\{N^{\tau}\geq
m+1\}}\\
&\ +\ \hat{q}_{T_{\hat{N}^{\sigma,*}_{n-1}\wedge
N^{\tau}}}^{\lambda}\mathbbm{1}_{\{N^{\tau}\leq m\}}.
\end{align*}
Using the definition of $\hat{N}^{\sigma,*}_{n-1}$ in
(\ref{optimalStop_aux}), we further have
\[\mathbbm{E}\left[\left.\hat{q}_{T_{m+1}}^{\lambda}\right|\mathcal{G}_{T_m}\right]\leq \max\left\{\mathbbm{E}\left[\left.\hat{q}_{T_{m+1}}^{\lambda}\right|\mathcal{G}_{T_m}\right],\tilde{L}_{T_m}\right\}=\hat{q}_{T_{m}}^{\lambda}
\ \text{on}\ \{\hat{N}^{\sigma,*}_{n-1}\geq  m+1\}.
\]
In turn,
\begin{align*}
&\mathbbm{E}\left[\left.\hat{q}_{T_{(m+1)\wedge \hat{N}^{\sigma,*}_{n-1}\wedge N^{\tau}}}^{\lambda}\right|\mathcal{G}_{T_m}\right]\\
\leq &\left(\sum_{j=n-1}^m\mathbbm{1}_{\{\hat{N}^{\sigma,*}_{n-1}=j\}}\hat{q}_{T_{j}}^{\lambda}+\mathbbm{1}_{\{\hat{N}^{\sigma,*}_{n-1}\geq m+1\}}\hat{q}_{T_{m}}^{\lambda}\right)\mathbbm{1}_{\{N^{\tau}\geq m+1\}}+\hat{q}_{T_{\hat{N}^{\sigma,*}_{n-1}\wedge N^{\tau}}}^{\lambda}\mathbbm{1}_{\{N^{\tau}\leq m\}}\\
=&\ \hat{q}_{T_{m\wedge
\hat{N}^{\sigma,*}_{n-1}}}^{\lambda}\mathbbm{1}_{\{N^{\tau}\geq
m+1\}}+\hat{q}_{T_{\hat{N}^{\sigma,*}_{n-1}\wedge
N^{\tau}}}^{\lambda}\mathbbm{1}_{\{N^{\tau}\leq
m\}}=\hat{q}_{T_{m\wedge \hat{N}^{\sigma,*}_{n-1}\wedge
N^{\tau}}}^{\lambda},
\end{align*}
which proves the supermartingale property (ii). Likewise, the
submartingale property (iii) can be proved in a similar way, and the
proof of the lemma is completed.
\end{proof}

We are now in a position to prove Theorem \ref{bigTheorem}. By
Lemmas \ref{firstLemma} and \ref{secondLemma}, we have
\begin{eqnarray}\label{inequality}
Q_0^{\lambda}&=&\mathbbm{E}\left[\int_0^{T_1\wedge T}\tilde{f}_s\,ds+\tilde{\xi}\mathbbm{1}_{\{T_1>T\}}+\hat{Q}_{T_1}^{\lambda}\mathbbm{1}_{\{T_1\leq T\}}\right]\notag\\
&=&\mathbbm{E}\left[\int_0^{T_1\wedge T}\tilde{f}_s\,ds+\tilde{\xi}\mathbbm{1}_{\{T_1>T\}}+\hat{q}_{T_1}^{\lambda}\mathbbm{1}_{\{T_1\leq T\}}\right]\notag\\
&\geq &\mathbbm{E}\bigg[\int_0^{T_1\wedge
T}\tilde{f}_s\,ds+\tilde{\xi}\mathbbm{1}_{\{T_1>
T\}}+{\mathbbm{E}\left[\tilde{R}_1(T_{\hat{N}^{\sigma,*}_1},T_{N^{\tau}})|\mathcal{G}_{T_1}\right]}\mathbbm{1}_{\{T_1\leq
T\}}\bigg]
\end{eqnarray}
for any $N^{\tau}\in\mathcal{N}_{1}(\lambda)$, where last inequality
follows from the supermartingale property (ii). Moreover, recall
that
\begin{align*}
&\ \mathbbm{E}\left[\tilde{R}_1(T_{\hat{N}^{\sigma,*}_1},T_{N^{\tau}})|\mathcal{G}_{T_1}\right]\\
=&\ \mathbbm{E}\left[\int^{T_{\hat{N}^{\sigma,*}_1}\wedge
T_{N^{\tau}}\wedge T}_{T_1\wedge T}\tilde{f}_s\,ds
+\tilde{\xi}\mathbbm{1}_{\left\{T_{\hat{N}^{\sigma,*}_1}\wedge
T_{N^{\tau}}\geq
T\right\}}\right.\\
&\left.+\
\tilde{L}_{T_{N^{\tau}}}\mathbbm{1}_{\left\{T_{N^{\tau}}<T,T_{N^{\tau}}\leq
T_{\hat{N}^{\sigma,*}_1}
\right\}}+\tilde{U}_{T_{\hat{N}^{\sigma,*}_1}}\mathbbm{1}_{\left\{T_{\hat{N}^{\sigma,*}_1}<T,T_{\hat{N}^{\sigma,*}_1}<
T_{N^{\tau}}\right\}}|\mathcal{G}_{T_1}\right].
\end{align*}
Plugging the above expression into (\ref{inequality}) further yields
\begin{align*}
Q_0^{\lambda}\geq&\ \mathbbm{E}\left[\int^{T_{\hat{N}^{\sigma,*}_1}\wedge T_{N^{\tau}}\wedge T}_{0}\tilde{f}_s\,ds+\tilde{\xi}\mathbbm{1}_{\left\{T_{\hat{N}^{\sigma,*}_1}\wedge T_{N^{\tau}}\geq T\right\}}+\tilde{L}_{T_{N^{\tau}}}\mathbbm{1}_{\left\{T_{N^{\tau}}<T,T_{N^{\tau}}\leq T_{\hat{N}^{\sigma,*}_1}\right\}}\right.\\
&+\left.\tilde{U}_{T_{\hat{N}^{\sigma,*}_1}}\mathbbm{1}_{\left\{T_{\hat{N}^{\sigma,*}_1}<T,T_{\hat{N}^{\sigma,*}_1}<
T_{N^{\tau}}\right\}}\right]=\mathbbm{E}\left[\tilde{R}(T_{\hat{N}^{\sigma,*}_1},T_{N^{\tau}})\right],
\end{align*}
for any $\tilde{\mathbbm{G}}$-stopping time
$N^{\tau}\in\mathcal{N}_1(\lambda)$. Taking the supremum over
$N^{\tau}\in\mathcal{N}_1(\lambda)$, we obtain
\[{Q_0^{\lambda}\geq \sup_{N^{\tau}\in\mathcal{N}_1(\lambda)}\mathbbm{E}\left[\tilde{R}(T_{\hat{N}^{\sigma,*}_1},T_{N^{\tau}})\right]\geq \inf_{N^{\sigma}\in\mathcal{N}_1(\lambda)}\sup_{N^{\tau}\in\mathcal{N}_1(\lambda)}\mathbbm{E}\left[\tilde{R}(T_{N^{\sigma}},T_{N^{\tau}})\right]=\overline{q}^{\lambda}.}\]
Similarly, we also have $Q_0^{\lambda}\leq \underline{q}^{\lambda}$.
It then follows from $\overline{q}^{\lambda}\geq
\underline{q}^{\lambda}$ that
$Q_0^{\lambda}=\underline{q}^{\lambda}=\overline{q}^{\lambda}.$

Finally, we verify that
$Q_0^{\lambda}=\mathbbm{E}[\tilde{R}(T_{\hat{N}^{\sigma,*}_1},T_{\hat{N}^{\tau,*}_1})],$
so $(\hat{N}^{\sigma,*}_1,\hat{N}^{\tau,*}_1)$ are the optimal
stopping strategy. Indeed, with $N^{\sigma}=\hat{N}^{\sigma,*}_1$
and $N^{\tau}=\hat{N}^{\tau,*}_1$, (\ref{inequality}) becomes an
equality due to the martingale property (i), i.e.
\begin{eqnarray*}
Q_0^{\lambda}&=&\mathbbm{E}\left[\int_0^{T_1\wedge T}\tilde{f}_s\,ds+\tilde{\xi}\mathbbm{1}_{\{T_1>T\}}+\hat{q}_{T_1}^{\lambda}\mathbbm{1}_{\{T_1\leq T\}}\right]\\
&=&\mathbbm{E}\left[\int_0^{T_1\wedge T}\tilde{f}_s\,ds+\tilde{\xi}\mathbbm{1}_{\{T_1> T\}}+\mathbbm{E}\left[\tilde{R}_1(T_{\hat{N}^{\sigma,*}_1},T_{\hat{N}^{\tau,*}_1})|\mathcal{G}_{T_1}\right]\mathbbm{1}_{\{T_1\leq T\}}\right]\\
&=&\mathbbm{E}\left[\tilde{R}(T_{\hat{N}^{\sigma,*}_1},T_{\hat{N}^{\tau,*}_1})\right].
\end{eqnarray*}
We conclude the proof by proving that the optimal stopping times
$\left(\hat{N}^{\sigma,*}_1,\hat{N}^{\tau,*}_1\right)$ are actually
$\left(N^{\sigma,*}_1,N^{\tau,*}_1\right)$ in
(\ref{optimalStop_discounted}). Indeed,
\begin{eqnarray*}
\hat{N}^{\sigma,*}_1&=&\inf\{N\geq 1:\hat{q}_{T_N}^{\lambda}=\tilde{U}_{T_N}\}\wedge M\\
&=&\inf\{N\geq 1:\hat{Q}_{T_N}^{\lambda}=\tilde{U}_{T_N}\}\wedge M\\
&=&\inf\{N\geq 1:Q_{T_N}^{\lambda}\geq \tilde{U}_{T_N}\}\wedge
M=N^{\sigma,*}_1,
\end{eqnarray*}
and, similarly, $\hat{N}^{\tau,*}_1=N^{\tau,*}_1$.

\section{Connection with standard Dynkin games}

We show that, when $\lambda\rightarrow\infty$, the value
$v^{\lambda}$ of the constrained Dynkin game converges to the value
of a standard Dynkin game. The setup is the same as in section 2
except that the control set is replaced with $\mathcal{R}_t$, which
is defined as
\[\mathcal{R}_t=\{\mathbbm{F}\mbox{-stopping time }\tau\mbox{ for }t\leq \tau(\omega) \leq T\}.\]
Define the corresponding upper and lower values of the {standard}
Dynkin game as
\begin{equation}
\label{upperValues_cts}
\overline{v}=\inf_{\sigma\in\mathcal{R}_0}\sup_{\tau\in\mathcal{R}_0}\mathbbm{E}\left[R(\sigma,\tau)\right],
\end{equation}
\begin{equation}
\label{lowerValues_cts}
\underline{v}=\sup_{\tau\in\mathcal{R}_0}\inf_{\sigma\in\mathcal{R}_0}\mathbbm{E}\left[R(\sigma,\tau)\right].
\end{equation}
This game is said to have value $v$ if
$v=\overline{v}=\underline{v}$, and $(\sigma^*,\tau^*)\in
\mathcal{R}_0\times \mathcal{R}_0$ is called a saddle point of the
game if $\mathbbm{E}\left[R(\sigma^*,\tau)\right]\leq
\mathbbm{E}\left[R(\sigma^*,\tau^*)\right]\leq
\mathbbm{E}\left[R(\sigma,\tau^*)\right]$ for every
$(\sigma,\tau)\in \mathcal{R}_0\times \mathcal{R}_0$.

\begin{proposition}
\label{prop_connection_cts} Suppose that Assumption
\ref{assumption_1} holds and, moreover, both $L$ and $U$ are
continuous and satisfy $L_T\leq\xi\leq U_T$. Then, the value $v$ of
the Dynkin game (\ref{upperValues_cts})-(\ref{lowerValues_cts})
exists and, moreover, $\lim_{\lambda \uparrow \infty}v^{\lambda}=v.$
\end{proposition}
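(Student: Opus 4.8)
The plan is to combine the BSDE characterization of Theorem~\ref{bigTheorem} with the classical representation of the standard Dynkin game value by a doubly reflected BSDE, and then to pass to the limit $\lambda\uparrow\infty$ through the penalized BSDE. Concretely, recall from Theorem~\ref{bigTheorem} that $v^{\lambda}=V_0^{\lambda}$, where $(V^{\lambda},Z^{\lambda})$ solves the penalized BSDE~(\ref{final_game_value}). On the other hand, under Assumption~\ref{assumption_1} together with $L_T\le\xi\le U_T$, the barriers satisfy $L\le U$, are bounded, and the terminal datum lies between them; these are exactly the hypotheses under which the doubly reflected BSDE~(\ref{final_game_value_cts}) with lower barrier $L$, upper barrier $U$, driver $f_s-rV_s$ and terminal condition $\xi$ admits a unique solution $(V,Z,K^{+},K^{-})$ with $V$ continuous and $L\le V\le U$; see Cvitani\'c and Karatzas~\cite{cvitanic1996backward}. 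For such doubly reflected BSDEs it is standard that the continuous-time Dynkin game~(\ref{upperValues_cts})-(\ref{lowerValues_cts}) has a value, that this value equals $V_0$, and that a saddle point is given by the first times $V$ touches the respective barriers. This already establishes the existence of $v=\overline v=\underline v=V_0$.

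It then remains to show $V_0^{\lambda}\to V_0$ as $\lambda\uparrow\infty$. First I would derive a priori bounds, uniform in $\lambda$: since $f,L,U,\xi$ are bounded, the comparison theorem for BSDEs yields a constant $C$, independent of $\lambda$, with $\|V^{\lambda}\|_{\infty}\le C$ and $\|Z^{\lambda}\|_{loc}\le C$, together with uniform $L^2$-bounds on the penalty terms $K^{\lambda,+}_t:=\int_0^{t}\lambda(L_s-V_s^{\lambda})^{+}\,ds$ and $K^{\lambda,-}_t:=\int_0^{t}\lambda(V_s^{\lambda}-U_s)^{+}\,ds$. These estimates force, in the limit, $(L_s-V_s)^{+}=0$ and $(V_s-U_s)^{+}=0$, i.e. $L\le V\le U$. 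One then identifies the limit: along a subsequence $V^{\lambda}\to V$, $K^{\lambda,\pm}\to K^{\pm}$ and $Z^{\lambda}\to Z$ in the appropriate spaces, and $(V,Z,K^{+},K^{-})$ satisfies~(\ref{final_game_value_cts}) including the Skorokhod conditions $\int_0^{T}(V_s-L_s)\,dK_s^{+}=\int_0^{T}(U_s-V_s)\,dK_s^{-}=0$. By uniqueness of the doubly reflected BSDE the whole family converges, and in particular $v^{\lambda}=V_0^{\lambda}\to V_0=v$. This is precisely the penalized-to-reflected convergence recorded in~\cite{cvitanic1996backward} and~\cite{hamadene1999infinite}, which I would invoke rather than reprove.

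The main obstacle is the simultaneous penalization of both barriers: unlike the single-barrier case, the two penalty terms push $V^{\lambda}$ in opposite directions, so the convergence is not monotone and one cannot argue by a monotone limit. The standard remedy, which I would follow, is to sandwich $V^{\lambda}$ between the solutions of two single-barrier penalized BSDEs (penalizing only $L$ from below, respectively only $U$ from above) via the comparison theorem, each of which converges monotonically; this supplies the uniform $L^{2}$-control of the penalty terms $K^{\lambda,\pm}$ needed to extract limits and to verify the Skorokhod conditions. The roles of the hypotheses are then transparent: $L\le U$ guarantees that the doubly reflected BSDE is well posed (the barriers do not cross), while $L_T\le\xi\le U_T$ guarantees compatibility of the terminal condition with the barriers, so that $V$ is continuous up to $T$ and no terminal jump in $K^{\pm}$ is created.
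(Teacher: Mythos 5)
Your proposal is correct and takes essentially the same route as the paper: identify $v^{\lambda}=V_0^{\lambda}$ via Theorem~\ref{bigTheorem}, represent the standard game value by the doubly reflected BSDE~(\ref{final_game_value_cts}) following Cvitani\'c and Karatzas~\cite{cvitanic1996backward} and Hamad\`ene et al.~\cite{hamadene1999infinite}, and conclude by the penalized-to-reflected convergence~(\ref{convergence_condition}). The additional detail you sketch on uniform estimates, the sandwich argument and the Skorokhod conditions is precisely the content of the cited convergence results, which the paper invokes without reproving.
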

\begin{proof}
To solve the Dynkin game
(\ref{upperValues_cts})-(\ref{lowerValues_cts}), we introduce the
following reflected BSDE defined on a random horizon $[0,T]$:
\begin{equation}
\label{final_game_value_cts} V_{t\wedge T}=\xi+\int_{t\wedge
T}^T(f_s-rV_s)ds+\int_{t\wedge T}^T\,dK_s^+-\int_{t\wedge
T}^T\,dK_s^--\int_{t\wedge T}^T Z_s\,dW_s
\end{equation}
for $t\geq 0$, under the constraints (i) $L_t\leq V_t\leq U_t$, for
$0\leq t\leq T$; (ii)
$\int_0^T\left(V_t-L_t\right)\,dK^+_t=\int_0^T\left(U_t-V_t\right)\,dK^-_t=0$.
By a solution to the reflected BSDE (\ref{final_game_value_cts}), we
mean a triplet of $\mathbbm{F}$-progressively measurable processes
$(V,Z,K)$, where $K:=K^+-K^-$ with $K^+$ and $K^-$  being increasing
processes starting from $K^+_0=K^{-}_0=0$.

It follows from Hamadene et al \cite{hamadene1999infinite} that
(\ref{final_game_value_cts}) is well-posed and admits a unique
solution. Using arguments similar to the ones in Cvitanic and
Karatzas \cite{cvitanic1996backward}, it is standard to show that
the value of the Dynkin game
(\ref{upperValues_cts})-(\ref{lowerValues_cts}) exists and is given
by the solution of the reflected BSDE (\ref{final_game_value_cts}),
i.e. $v=\overline{v}=\underline{v}=V_0$.


To prove the second assertion, we note that BSDE
(\ref{final_game_value}) can be regarded as a sequence of penalized
BSDEs for (\ref{final_game_value_cts}), where the local time
processes $K^+$ and $K^-$ are approximated by
\[K_t^{\lambda,+}:=\int_0^t
\lambda\left(L_s-V_s^{\lambda}\right)^+\,ds;\quad
K_t^{\lambda,-}:=\int_0^t
\lambda\left(V_s^{\lambda}-U_s\right)^+\,ds,\] with
$K^{\lambda}:=K^{\lambda,+}-K^{\lambda,-}$. Since  $\lim_{\lambda
\uparrow
\infty}\mathbbm{E}[\sup_{t\in[0,T]}|V_t^{\lambda}-V_t|^2]=0$ (see,
for example, \cite{hamadene1999infinite} and
\cite{cvitanic1996backward}), the second assertion follows
immediately.
\end{proof}

\section{Replication of constrained Dynkin games}
In this section, we discuss about replication of the constrained
Dynkin game. This provides a foundation for the risk-neutral
valuation of convertible bonds introduced in the next section.

We interpret $\mathbb{P}$ as a risk-neutral probability measure, and
let $\bar{N}_t:=\sum_{n\geq 1}\mathbbm{1}_{\{T_n\leq t\}}-\lambda
t$, $t\geq 0$, be the compensated Poisson martingale. Suppose there
exist $(d+2)$ underlying assets, whose pricing processes follow
\begin{align}
\label{dynamics_S}
dS_t^i&=S_t^i(r-q^i)dt+S_t^i\sigma^i dW_t,\ 1\leq i\leq d;\\
dP_t&=P_{t-}rdt+P_{t-}\bar{\sigma}d\bar{N}_t;\\
dB_t&=B_trdt,
\end{align}
where $r>0$ is the risk-free interest rate, $\bar{\sigma}>0$
represents the volatility of $P$, and $q^i$ and
$\sigma^i:=(\sigma^{ij})_{1\leq j\leq d}$ represent, respectively,
the dividend and volatility of $S^i$. Assume that the volatility
matrix $\sigma:=(\sigma^{ij})_{1\leq i,j\leq d}$ is invertible. The
risky assets $(S^i)_{1\leq i\leq d}$ are the underlying assets used
to hedge the Brownian noise of the game. The risky asset $P$ is used
to hedge the jump risk of the Poisson process. In practice, it could
be the cash flow of a credit default swap delivering payoffs at jump
times $(T_n)_{n\geq 1}$ (see, for example, \cite{Bielecki2008CDS}
for the single jump case). Finally, $B$ represents the risk-free
bank account.

From section \ref{bigTheorem} (Lemmas \ref{firstLemma} and
\ref{secondLemma} in particular), we know that the solution
$V^{\lambda}$ of BSDE (\ref{final_game_value}) provides the values
of the constrained Dynkin game
(\ref{upperValues})-(\ref{lowerValues}) starting at different
Poisson arrival times $T_{n-1}$ for $1\leq n\leq M$, and they
satisfy the recursive equation
\begin{align}\label{DPP}
e^{-rT_{n-1}}V^{\lambda}_{T_{n-1}}=&\
\mathbb{E}\left[\int_{T_{n-1}}^{T_n\wedge T}e^{-rs}f_sds+e^{-rT}\xi
\mathbbm{1}_{\{T_n>
T\}}\right.\\
&\left.+\
e^{-rT_{n}}\min\{U_{T_n},\max\{V^{\lambda}_{T_n},L_{T_n}\}\}\mathbbm{1}_{\{T_n\leq
T\}}|\mathcal{G}_{T_{n-1}}\right].\notag
\end{align}
Thus, the discounted payoff of the game starting at $T_{n-1}$ is
\begin{align}\label{payoff}
&\left(\int_{T_{n-1}}^{T}e^{-rs}{f}_s\,ds+e^{-rT}{\xi}\right)\mathbbm{1}_{\{T_n>T\}}\\
+&\left(\int_{T_{n-1}}^{T_n}e^{-rs}{f}_s\,ds+e^{-rT_{n}}
\min\{U_{T_n},\max\{V^{\lambda}_{T_n},L_{T_n}\}\}\right)
\mathbbm{1}_{\{T_n\leq T\}},\notag
\end{align}
with $V^{\lambda}_{T_n}$ being the value of the game starting at
$T_n$. Compared to the original payoff (\ref{def_payoff}), the above
payoff (with $n=1$) only involves the first Poisson arrival time
$T_1$, and the optimality of stopping strategies is encoded in
$V_{T_1}^{\lambda}$. Thus, the replication of the constrained Dynkin
game (\ref{upperValues})-(\ref{lowerValues}) naturally depends on
the replication of the same game but starting at Poisson arrival
time $T_2$, the later of which in turn depends on the replication of
the game starting from $T_3$ and so on and so forth. In particular,
the discounted payoff of the game starting at $T_{M-1}$ is
$\int_{T_{M-1}}^Te^{-rs}f_sds+e^{-rT}\xi,$ since $T_{M-1}\leq T<T_M$
by the definition of the random variable $M$.

For $1\leq n\leq M $, consider the constrained Dynkin game starting
at Poisson arrival time $T_{n-1}$. We aim to construct a replication
portfolio $(\pi_t^{S,n},\pi_t^{P,n},\pi_t^{B,n})$,
$t\in[T_{n-1},T]$, to replicate the discounted payoff
(\ref{payoff}), where $\pi^{S,n}=(\pi^{S^i,n})_{1\leq i\leq d}$
represent the amount of the money invested in $(S^i)_{1\leq i\leq
d}$, and $\pi^{P,n}$ and $\pi^{B,n}$ represent the amount of the
money invested in $P$ and $B$, respectively. Let $X_t^n$ be the
corresponding wealth of each player at time $t$. Then,
$X_t^n=\sum_{i=1}^d\pi_t^{S^i,n}+\pi_t^{P,n}+\pi_t^{B,n}$, and the
self-financing condition implies that
\begin{align}\label{wealthequ}
X_t^n&=X^n_{T_{n-1}}+\int_{T_{n-1}}^t
\left(\sum_{i=1}^d\frac{\pi_s^{S^i,n}}{S_s^i}dS_s^i+
\frac{\pi_s^{P,n}}{P_{s-}}dP_s+\frac{\pi_s^{B,n}}{B_s}dB_s+
\sum_{i=1}^dq^i\pi_s^{S^i,n}ds\right)\\
&=X^n_{T_{n-1}}+\int_{T_{n-1}}^t\left(rX_s^nds+\pi_s^{S,n}\sigma
dW_s+\pi_s^{P,n}\bar{\sigma}d\bar{N}_s\right),\notag
\end{align}
for $t\in[T_{n-1},T]$. The problem is to find a replication
portfolio $(\pi^{S,n},\pi^{P,n},\pi^{B,n})$ such that the discounted
wealth $e^{-r{T}}X_T^n$ replicates the discounted payoff
(\ref{payoff}), and to prove that
$X^{n}_{T_{n-1}}=V^{\lambda}_{T_{n-1}}$, i.e. the constrained Dynkin
game starting from $T_{n-1}$ is replicable and its value is indeed
given by $V^{\lambda}_{T_{n-1}}$.

\begin{theorem}\label{theorem_5.1}
Let $(Y^{\xi,\theta},Z^{\xi,\theta})$ be the unique solution of the
linear BSDE defined on the random horizon $[\theta,T]$ with a
parameter $\theta\in[0,T]$, i.e.
\begin{equation}\label{BSDE0}
Y_{t\wedge
T}^{\xi,\theta}=\left(\int_{\theta}^Te^{r(T-s)}f_sds+\xi\right)-\int_{t\wedge
T}^TrY_s^{\xi,\theta}ds-\int_{t\wedge T}^TZ_s^{\xi,\theta}dW_s,
\end{equation}
for $t\geq \theta$. Then, for the constrained Dynkin game starting
at $T_{M-1}$, its replication wealth and the corresponding
replication portfolio are given by
\begin{align}\label{wealth0}
&X^M_t=Y^{\xi,T_{M-1}}_t;\\
&(\pi_t^{S,M},\pi_t^{P,M},\pi_t^{B,M})=(Z_t^{\xi,T_{M-1}}\sigma^{-1},0,X_t^{M}-\pi_t^{S,M}),\
t\in[T_{M-1},T],\notag
\end{align}
where $\sigma^{-1}$ is the inverse of the volatility matrix
$(\sigma^{ij})_{1\leq i,j\leq d}$. Moreover, the value of the game
is given by $X^M_{T_{M-1}}=V^{\lambda}_{T_{M-1}}$.

In general, let $(Y^{\theta},Z^{\theta})$ be the unique solution of
the linear BSDE defined on $[\theta,T]$ with a parameter
$\theta\in[0,T]$, i.e.
\begin{equation}\label{BSDE1}
Y_{t\wedge T}^{\theta}=
\left(\int_{\theta}^{T}e^{r(T-s)}{f}_s\,ds+{\xi}\right)
-\int_{t\wedge
T}^T\left[rY_s^{\theta}-\lambda(Y_s^{\theta,s}-Y_s^{\theta})\right]ds
-\int_{t\wedge T}^TZ_s^{\theta}dW_s,
\end{equation}
for $t\geq \theta$, and
$(Y(\theta,\bar{\theta}),Z(\theta,\bar{\theta}))$ be the unique
solution of the linear BSDE defined on $[\bar{\theta},T]$ with
parameters $\theta,\bar{\theta}$ satisfying $0\leq
\theta<\bar{\theta}\leq T$, i.e.
\begin{align}\label{BSDE2}
Y_{t\wedge
T}^{\theta,\bar{\theta}}=&\left(\int_{\theta}^{\bar{\theta}}e^{r(T-s)}f_sds+e^{r(T-\bar{\theta})}\min\{U_{\bar{\theta}},\max\{V^{\lambda}_{\bar{\theta}},L_{\bar{\theta}}\}\}\right)\\
&-\int_{t\wedge T}^TrY_s^{\theta,\bar{\theta}}ds-\int_{t\wedge
T}^TZ_s^{\theta,\bar{\theta}}dW_s,\notag
\end{align}
for $t\geq \theta$, where $V^{\lambda}$ is the unique solution to
BSDE (\ref{final_game_value}). Then, for the constrained Dynkin game
starting at $T_{n-1}$ for $1\leq n\leq M-1$, its replication wealth
and the corresponding replication portfolio are given by
\begin{align}\label{wealth1}
X_t^n&=Y_t^{T_{n-1}}\mathbbm{1}_{\{t<T_n\}}+Y_t^{T_{n-1},T_{n}}\mathbbm{1}_{\{t\geq T_n\}};\\
\pi_t^{S,n}&=\left(Z_t^{T_{n-1}}\mathbbm{1}_{\{t\leq T_n\}}+Z_t^{T_{n-1},T_n}\mathbbm{1}_{\{t>T_n\}}\right)\sigma^{-1};\notag\\
\pi_t^{P,n}&=\left(Y_t^{T_{n-1},t}-Y_t^{T_{n-1}}\right)\mathbbm{1}_{\{t\leq T_n\}}{\bar{\sigma}}^{-1};\notag\\
\pi_t^{B,n}&=X_t-\pi_t^{S,n}-\pi_t^{P,n},\ t\in[T_{n-1},T].\notag
\end{align}
Moreover, the value of the game is given by
$X^n_{T_{n-1}}=V^{\lambda}_{T_{n-1}}$.
\end{theorem}
\begin{proof} We first replicate the constrained Dynkin game starting at
$T_{M-1}$. It is clear that
$(Y^{\xi,T_{M-1}},Z^{\xi,T_{M-1}}\sigma^{-1},0)$ satisfies the
wealth equation (\ref{wealthequ}) and, moreover, by applying It\^o's
formula to $e^{-rt}Y_t^{\xi,\theta}$, we further have
\begin{equation*}
e^{-r(t\wedge T)}Y_{t\wedge
T}^{\xi,\theta}=\left(\int_{\theta}^Te^{-rs}f_sds+e^{-rT}\xi\right)
-\int_{t\wedge T}^Te^{-rs}Z_s^{\xi,\theta} dW_s.
\end{equation*}
Thus, $e^{-rT}Y_{T}^{\xi,T_{M-1}}$ replicates the discounted payoff
(\ref{payoff}) with $n=M$. Furthermore,
$$X_{T_{M-1}}^M=Y_{T_{M-1}}^{\xi,T_{M-1}}=\mathbb{E}\left[\left.\int_{T_{M-1}}^Te^{-r(s-T_{M-1})}f_sds+e^{-r(T-T_{M-1})}\xi\right|\mathcal{G}_{T_{M-1}}\right]=V^{\lambda}_{T_{M-1}}.$$

In general, we prove the assertion for $1\leq n\leq M-1$ by
induction. Suppose the assertion holds for the game starting at
$T_{n}$ and $X_{T_n}^{n+1}=V^{\lambda}_{T_n}$. Then, for the game
starting at $T_{n-1}$, by the construction of $X^n$ in
(\ref{wealth1}) and the terminal data for BSDEs (\ref{BSDE1}) and
(\ref{BSDE2}), we have
\begin{align}\label{terminal}
e^{-rT}X_{T}^n=&\
e^{-rT}\left(Y_T^{T_{n-1}}\mathbbm{1}_{\{T<T_n\}}+Y_T^{T_{n-1},T_{n}}\mathbbm{1}_{\{T\geq
T_n\}}\right)\\
=&\left(\int_{T_{n-1}}^{T}e^{-rs}{f}_s\,ds+e^{-rT}{\xi}\right)\mathbbm{1}_{\{T_n>T\}}\notag\\
&+\left(\int_{T_{n-1}}^{T_n}e^{-rs}{f}_s\,ds+e^{-rT_{n}}
\min\{U_{T_n},\max\{V^{\lambda}_{T_n},L_{T_n}\}\}\right)
\mathbbm{1}_{\{T_n\leq T\}}.\notag
\end{align}
Therefore, $e^{-rT}X_T^n$ replicates the discounted payoff
(\ref{payoff}).

Next, we show that $(X^n,\pi^{S,n},\pi^{P,n})$ given in
(\ref{wealth1}) indeed satisfies the wealth equation
(\ref{wealthequ}). To this end, note that
$$X_t^n=X^n_{t\wedge {T_n-}}+(X^n_{t\wedge T_n}-X^n_{t\wedge {T_n-}})+(X_t^n-X^n_{t\wedge T_n}),$$
for $t\in[T_{n-1},T]$. Since $X^n_{t\wedge {T_n-}}=Y_{t\wedge
{T_n-}}^{T_{n-1}}$ by the definition of $X^n$, we have
\begin{align*}
X^n_{t\wedge {T_n-}}=&\ Y_{
{T_{n-1}}}^{T_{n-1}}+\int_{T_{n-1}}^{t\wedge
T_{n}-}\left[rY_s^{T_{n-1}}-\lambda(Y_s^{T_{n-1},s}-Y_s^{T_{n-1}})\right]ds+\int_{
T_{n-1}}^{t\wedge T_{n}-}Z_s^{T_{n-1}}dW_s\\
=&\ X_{{T_{n-1}}}^{n}+\int_{T_{n-1}}^{t\wedge
T_{n}-}rY_s^{T_{n-1}}ds+\int_{ T_{n-1}}^{t\wedge
T_{n}-}Z_s^{T_{n-1}}dW_s\\
&-\int_{T_{n-1}}^{t}(Y_s^{T_{n-1},s}-Y_s^{T_{n-1}})\mathbbm{1}_{\{s\leq
T_n \}}\lambda ds.
\end{align*}
Furthermore, at the Poisson arrival time $T_n$, $X^n$ has a jump
with size
$$X^n_{t\wedge T_n}-X^n_{t\wedge
{T_n-}}=\left(Y^{T_{n-1},T_n}_{T_n}-Y^{T_{n-1}}_{
T_{n}-}\right)\mathbbm{1}_{\{T_n\leq
t\}}=\int_0^t(Y_s^{T_{n-1},s}-Y_s^{T_{n-1}})\mathbbm{1}_{\{s\leq T_n
\}})dN_s.$$ On the other hand, since $X_t^n=Y_t^{T_{n-1},T_n}$ on
the event $\{t\geq T_n\}$, we have
$$X_t^n-X^n_{t\wedge T_n}=\int_{t\wedge T_n}^{t}
rY_s^{T_{n-1},T_n}ds+ \int_{t\wedge T_n}^{t}Z_s^{T_{n-1},T_n}dW_s.$$
In turn, we deduce, using the constructions of $X^n$, $\pi^{S,n}$
and $\pi^{P,n}$ in (\ref{wealth1}), that
\begin{align*}
X_t^n= X_{T_{n-1}}^n+\int_{T_{n-1}}^trX_s^nds+
\int_{T_{n-1}}^t\pi_s^{S,n}\sigma
dW_s+\int_{T_{n-1}}^{t}\pi_s^{P,n}\bar{\sigma}d\bar{N}_s.
\end{align*}

Finally, applying It\^o's formula to $e^{-rt}X_t^{n}$ and using
(\ref{terminal}), we obtain that
\begin{align*}
e^{-r(t\wedge T)}X_{t\wedge T}^{n}=&\left(\int_{T_{n-1}}^{T}e^{-rs}{f}_s\,ds+e^{-rT}{\xi}\right)\mathbbm{1}_{\{T_n>T\}}\notag\\
&+\left(\int_{T_{n-1}}^{T_n}e^{-rs}{f}_s\,ds+e^{-rT_{n}}
\min\{U_{T_n},\max\{V^{\lambda}_{T_n},L_{T_n}\}\}\right)
\mathbbm{1}_{\{T_n\leq T\}}\\
&-\int_{t\wedge T}^Te^{-rs}\pi_s^{S,n}\sigma dW_s-\int_{t\wedge
T}^{T}e^{-rs}\pi_s^{P,n}\bar{\sigma}d\bar{N}_s.
\end{align*}
In turn, taking conditional expectation with respect to
$\mathcal{G}_{T_{n-1}}$ and using (\ref{DPP}), we conclude that
$X_{T_{n-1}}^{n}=V_{T_{n-1}}^{\lambda}$.
\end{proof}

\section{Application to convertible bonds with random intervention times}
\label{pcb_poisson_section}

In this section, using the constrained Dynkin game introduced in
section 2, we study convertible bonds for which both players are
only allowed to stop at a sequence of random intervention times.

Traditionally, convertible bond models often assume that both the
bond holder and the issuing firm are allowed to stopped at any
stopping time adapted to the firm's fundamental (such as its stock
prices). In reality, there may exist some liquidation constraint as
an external shock, and both players only make their decisions when
such a shock arrives. We model such a liquidation shock as the
arrival times of an exogenous Poisson process. A similar idea has
first appeared in the modeling of debt run problems (see
\cite{Liang2}), which can be formulated as optimal stopping problems
with Poisson arrival times.

\begin{assumption}\label{assumption_pcb}
Let $d=1.$ The firm's stock price $S^s$, under the risk-neutral
probability measure $\mathbbm{P}$, follows
\begin{equation}
\label{dynamics_S_1} S_t^s=s+\int_0^t(r-q)S_u^s\,du+\int_0^t\sigma
S_u^s\,dW_u,
\end{equation}
with $S_0^s=s>0$, where the constants $r$, $q$, $\sigma$ represent
the risk-free interest rate, the dividend rate and the volatility of
the stock, satisfying the parameter assumption $r> q$\footnote{The
case $r\leq q$ can be treated in a similar way.}.
\end{assumption}

The firm issues convertible bonds as perpetuities with a constant
coupon rate $c$. Consider an investor purchasing a share of this
convertible bond at initial time $t=0$. By holding the convertible
bond, the investor will continuously receive the coupon rate $c$
from the firm until the contract is terminated. The investor has the
right to convert her bond to the firm's stocks, while the firm has
the right to call the bond and force the bondholder to surrender her
bond to the firm at a sequence of Poisson arrival times
$\{T_n\}_{n\geq 1}$ with a constant intensity $\lambda>0$. Hence,
there are two situations that the contract maybe terminated:

(i) if the firm calls the bond at some $\mathbbm{G}$-stopping time
$\sigma$ firstly, the bondholder will receive a pre-specified
surrender price $K$ at time $\sigma$;

(ii) if the investor chooses to convert her bond at some
$\mathbbm{G}$-stopping time $\tau$ firstly or both players choose to
stop the contract simultaneously, the bondholder will obtain $\gamma
S_{\tau}$ at time $\tau$ from converting her bond with a
pre-specified conversion rate $\gamma\in(0,1)$.

In summary, the investor will obtain the following discounted payoff
at initial time $t=0$:
\begin{equation}
\label{convertible_bond_payoff}
P(s;\sigma,\tau)=\int_0^{\sigma\wedge \tau}
e^{-ru}c\,du+e^{-r\tau}\gamma
S^s_{\tau}\mathbbm{1}_{\{\tau\leq\sigma\}}+e^{-r\sigma}K\mathbbm{1}_{\{\sigma<\tau\}},
\end{equation}
with $\sigma,\tau\in\tilde{\mathcal{R}}_{T_1}(\lambda)$, where
\[\tilde{\mathcal{R}}_{T_i}(\lambda)=\{\mathbbm{G}\mbox{-stopping time }\tau\mbox{ for }\tau(\omega)=T_N(\omega)\mbox{ where }N\geq i\}.\]

The investor will choose $\tau\in\tilde{\mathcal{R}}_{T_1}(\lambda)$
to maximize the bond value, while the firm will choose
$\sigma\in\tilde{\mathcal{R}}_{T_1}(\lambda)$ to maximize the equity
value of the firm by minimizing the bond value. This leads to a
constrained Dynkin game as introduced in section 2. The upper value
and lower value of this \emph{constrained convertible bond} are
\begin{equation}
\label{upperValues_pcb}
\overline{v}^{\lambda}(s)=\inf_{\sigma\in\tilde{\mathcal{R}}_{T_1}(\lambda)}\sup_{\tau\in\tilde{\mathcal{R}}_{T_1}(\lambda)}\mathbbm{E}\left[P(s;\sigma,\tau)\right],
\end{equation}
\begin{equation}
\label{lowerValues_pcb}
\underline{v}^{\lambda}(s)=\sup_{\tau\in\tilde{\mathcal{R}}_{T_1}(\lambda)}\inf_{\sigma\in\tilde{\mathcal{R}}_{T_1}(\lambda)}\mathbbm{E}\left[P(s;\sigma,\tau)\right].
\end{equation}

Note that the constrained Dynkin game in section 2 does not exactly
cover the above constrained convertible bond, since the model in
section 2 has a random terminal time $T$, while the convertible bond
is perpetual. However, in the following proposition, we shall show
that when
$$s\geq
\bar{s}^{\lambda}:=\frac{q+\lambda}{r+\lambda}\frac{K}{\gamma},$$
the optimal stopping strategy is trivial. In this region, it is
always optimal for {both the investor and the firm to stop at the
first Poisson arrival time. Intuitively, when the stock price is
high, the stock is attractive enough to lead both the investor to
convert her bond to stocks and the firm to prevent the investor from
converting by calling the bond as early as possible.}


%

\begin{proposition}
\label{pcb_poisson_alwaysConvertLemma} Suppose {that} Assumption
\ref{assumption_pcb} holds. Then, the value of the {constrained}
convertible bond, denoted as $v^{\lambda}(s)$, exists and satisfies
$L^{\lambda}(s)\leq v^{\lambda}(s)\leq U^{\lambda}$ for
$s\in(0,\infty)$, where
$$L^{\lambda}(s):=\frac{c}{r+\lambda}+ \frac{\lambda}{q+\lambda}\gamma
s;\quad U^{\lambda}:=\frac{c+\lambda K}{r+\lambda}.$$ Moreover, in
the domain $s\in\left[\bar{s}^{\lambda},\infty\right)$, it holds
that $v^{\lambda}(s)=L^{\lambda}(s)$, and the optimal stopping
strategy is {$\tau^{*,\lambda}=\sigma^{*,\lambda}=T_1$}.
\end{proposition}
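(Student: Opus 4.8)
The plan is to reduce the perpetual game to a one-step problem by conditioning on the first Poisson arrival $T_1$ and exploiting the memorylessness of the Poisson process together with the time-homogeneity of $S$. Writing $v^{\lambda}(s)$ for the value (once it exists), the strong Markov property and the observation that the continuation of the game from $T_1$ is again a constrained convertible bond started at $S_{T_1}^{s}$ should give the fixed-point relation
\[
v^{\lambda}(s)=\mathbbm{E}\!\left[\int_0^{T_1}e^{-ru}c\,du+e^{-rT_1}\,\Phi\!\left(S_{T_1}^{s}\right)\right],
\]
where $\Phi(x)$ is the value of the one-shot game at an arrival with spot $x$ (a comparison of conversion $\gamma x$, call $K$, and continuation $v^{\lambda}(x)$). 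Existence and continuity of $v^{\lambda}$ would then follow either from a contraction/monotone-iteration argument on bounded functions, or by truncating the horizon and invoking Theorem \ref{bigTheorem}. I would record here the two elementary moment identities $\mathbbm{E}[e^{-rT_1}]=\lambda/(r+\lambda)$ and $\mathbbm{E}[e^{-rT_1}S_{T_1}^{s}]=s\,\lambda/(q+\lambda)$, obtained from $T_1\sim\mathrm{Exp}(\lambda)$ independent of $W$ and $\mathbbm{E}[e^{-rt}S_t^{s}]=se^{-qt}$, since every constant below is a combination of these.

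The two bounds then come from the two ``stop-at-$T_1$'' strategies. For the lower bound I fix the investor at $\tau=T_1$: because conversion wins ties ($\tau\le\sigma$), for every $\sigma\in\tilde{\mathcal{R}}_{T_1}(\lambda)$ the realized payoff equals $\int_0^{T_1}e^{-ru}c\,du+e^{-rT_1}\gamma S_{T_1}^{s}$, independent of $\sigma$; taking expectations and using the identities yields $\underline{v}^{\lambda}(s)\ge c/(r+\lambda)+\lambda\gamma s/(q+\lambda)=L^{\lambda}(s)$. Symmetrically, the firm's option to call at $T_1$ terminates the contract at the surrender price $K$ and furnishes the matching upper bound $\overline{v}^{\lambda}(s)\le (c+\lambda K)/(r+\lambda)=U^{\lambda}$. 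Combined with existence this gives $L^{\lambda}(s)\le v^{\lambda}(s)\le U^{\lambda}$.

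The trivial region now drops out of the single algebraic identity $L^{\lambda}(\bar{s}^{\lambda})=U^{\lambda}$, immediate from $\bar{s}^{\lambda}=\frac{q+\lambda}{r+\lambda}\frac{K}{\gamma}$: at $s=\bar{s}^{\lambda}$ the two bounds coincide and pin $v^{\lambda}(\bar{s}^{\lambda})=L^{\lambda}(\bar{s}^{\lambda})=U^{\lambda}$. To extend this to all $s\ge\bar{s}^{\lambda}$ I would verify that $(\sigma^{*},\tau^{*})=(T_1,T_1)$ is a saddle point. One inequality is free: with the investor committed to $\tau=T_1$ the payoff is $L^{\lambda}(s)$ for any firm response, so the firm cannot improve. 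For the other, I compare the investor's two terminal outcomes once the firm has called at $T_1$, namely converting (value $L^{\lambda}(s)$) versus accepting the call (value $U^{\lambda}$); since $L^{\lambda}$ is increasing and $L^{\lambda}(s)\ge L^{\lambda}(\bar{s}^{\lambda})=U^{\lambda}$ on $[\bar{s}^{\lambda},\infty)$, conversion is the better terminal choice, delivering $v^{\lambda}(s)=L^{\lambda}(s)$.

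The main obstacle is exactly this last optimality verification. Establishing $v^{\lambda}\le U^{\lambda}$ and the saddle property rigorously requires controlling the players' $\mathcal{G}_{T_1}$-measurable, hence state-dependent, responses at the first arrival, rather than merely comparing the two pure strategies; I expect the threshold condition $s\ge\bar{s}^{\lambda}$ (equivalently $L^{\lambda}(s)\ge U^{\lambda}$) to be precisely what rules out a profitable deviation, with the standing assumption $r>q$ entering through $\bar{s}^{\lambda}<K/\gamma$. A secondary technical point is justifying the dynamic-programming reduction and the existence of $v^{\lambda}$ on the perpetual horizon, for which the cleanest route is to pass to the limit in the finite-horizon games already covered by Theorem \ref{bigTheorem}.
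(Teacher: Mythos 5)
Your lower bound is exactly the paper's argument and is correct: with $\tau\equiv T_1$ the payoff is $\int_0^{T_1}e^{-ru}c\,du+e^{-rT_1}\gamma S^s_{T_1}$ regardless of $\sigma$, and the two moment identities give $\underline{v}^{\lambda}(s)\geq L^{\lambda}(s)$. The upper bound, however, is mishandled, and this is not a minor technicality. Committing the firm to $\sigma\equiv T_1$ does \emph{not} terminate the contract at the surrender price $K$: in the payoff (\ref{convertible_bond_payoff}) conversion wins ties, so the investor can respond with a $\mathcal{G}_{T_1}$-measurable choice between converting at $T_1$ (collecting $\gamma S^s_{T_1}$) and letting the call go through (collecting $K$). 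The resulting bound is $\overline{v}^{\lambda}(s)\leq \frac{c}{r+\lambda}+\max\left\{\frac{\lambda}{q+\lambda}\gamma s,\frac{\lambda K}{r+\lambda}\right\}=\max\{L^{\lambda}(s),U^{\lambda}\}$, which is what the paper derives, not $U^{\lambda}$. Indeed your claimed inequality $v^{\lambda}\leq U^{\lambda}$ on all of $(0,\infty)$ is incompatible with the identity $v^{\lambda}=L^{\lambda}>U^{\lambda}$ on $(\bar{s}^{\lambda},\infty)$ that you assert two paragraphs later. Once the upper bound is stated correctly, the region $s\geq\bar{s}^{\lambda}$ follows by a pure squeeze, $\overline{v}^{\lambda}\leq\max\{L^{\lambda},U^{\lambda}\}=L^{\lambda}\leq\underline{v}^{\lambda}$, with no saddle-point verification needed; your attempted verification compares only two pure responses of the investor and, as you concede yourself, does not control state-dependent deviations at $T_1$, so it does not close.

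The second genuine gap is the existence of the value on $(0,\bar{s}^{\lambda})$, which is part of the proposition's claim and is only gestured at in your proposal (``contraction/monotone iteration'' or ``truncating the horizon''). A deterministic truncation does not fit Theorem \ref{bigTheorem}, whose data require an explicit terminal payoff $\xi$ at a finite $\mathbbm{F}$-stopping time. The paper's device is to localize at the hitting time $\theta^{\lambda}=\inf\{u\geq 0: S^s_u\geq\bar{s}^{\lambda}\}$: since the value is already known to equal $L^{\lambda}(\bar{s}^{\lambda})=U^{\lambda}$ on the boundary, dynamic programming reduces the perpetual game on $(0,\bar{s}^{\lambda})$ to a constrained Dynkin game with $T=\theta^{\lambda}$, $f_t=c$, $L_t=\gamma S^s_t$, $U_t=K$ and $\xi=U^{\lambda}$ (after replacing the control set by stopping indices up to the first arrival after $\theta^{\lambda}$), to which Theorem \ref{bigTheorem} applies directly. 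Your one-step fixed-point relation is a plausible alternative route, but as written it is not carried out: no contraction constant is exhibited, and no argument is given that the fixed point coincides with the upper and lower game values.
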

\begin{proof}
Choosing $\tau\equiv T_1$ in (\ref{lowerValues_pcb}) yields a lower
bound of the convertible bond price:
\begin{eqnarray*}
\underline{v}^{\lambda}(s)&=&\sup_{\tau\in\tilde{\mathcal{R}}_{T_1}(\lambda)}\inf_{\sigma\in\tilde{\mathcal{R}}_{T_1}(\lambda)}\mathbbm{E}\left[\int_0^{\sigma\wedge \tau} e^{-ru}c\,du+e^{-r\tau}\gamma S^s_{\tau}\mathbbm{1}_{\{\tau\leq\sigma\}}+e^{-r\sigma}K\mathbbm{1}_{\{\sigma<\tau\}}\right]\\
&\geq &\inf_{\sigma\in\tilde{\mathcal{R}}_{T_1}(\lambda)}\mathbbm{E}\left[\int_0^{T_1} e^{-ru}c\,du+e^{-rT_1}\gamma S^s_{T_1}\right]\\
&=&\mathbbm{E}\left[\int_0^{\infty}\lambda e^{-\lambda m}\left(\int_0^m e^{-ru}c\,du+e^{-rm}\gamma S_m^s\right)\,dm\right]\\
&=&\int_0^{\infty}\lambda e^{-\lambda m}\int_0^m e^{-ru}c\,du\,dm+ \lambda\gamma \mathbbm{E}\left[\int_0^{\infty}e^{-(r+\lambda)m}S_m^s\,dm\right]\\
&=&\frac{c}{r+\lambda}+ \frac{\lambda}{q+\lambda}\gamma
s=L^{\lambda}(s),
\end{eqnarray*}
where we used the integration by parts in the last equality.

On the other hand, by choosing $\sigma\equiv T_1$ in
(\ref{upperValues_pcb}), we get an upper bound of the convertible
bond price:
\begin{eqnarray*}
\overline{v}^{\lambda}(s)&=&\inf_{\sigma\in\tilde{\mathcal{R}}_{T_1}(\lambda)}\sup_{\tau\in\tilde{\mathcal{R}}_{T_1}(\lambda)}\mathbbm{E}\left[\int_0^{\sigma\wedge \tau} e^{-ru}c\,du+e^{-r\tau}\gamma S^s_{\tau}\mathbbm{1}_{\{\tau\leq\sigma\}}+e^{-r\sigma}K\mathbbm{1}_{\{\sigma<\tau\}}\right]\\
&\leq &\sup_{\tau\in\tilde{\mathcal{R}}_{T_1}(\lambda)}\mathbbm{E}\left[\int_0^{T_1} e^{-ru}c\,du+e^{-r{T_1}}\gamma S^s_{T_1}\mathbbm{1}_{\{\tau={T_1}\}}+e^{-r{T_1}}K\mathbbm{1}_{\{\tau>{T_1}\}}\right]\\
&=&\frac{c}{r+\lambda}+ \max\left\{\frac{\lambda}{q+\lambda}\gamma
s,\frac{\lambda
K}{r+\lambda}\right\}=\max\{L^{\lambda}(s),U^{\lambda}\}.
\end{eqnarray*}

In the domain $s\in\left[\bar{s}^{\lambda},\infty\right)$, we always
have $L^{\lambda}(s)\geq U^{\lambda}$ so
$\overline{v}^{\lambda}(s)\leq L^{\lambda}(s)\leq
\underline{v}^{\lambda}(s).$ Thus, the value of the convertible bond
exists, and
$v^{\lambda}(s)=\bar{v}^{\lambda}(s)=\underline{v}^{\lambda}(s)=L^{\lambda}(s),$
with the optimal stopping strategy
{$\tau^{*,\lambda}=\sigma^{*,\lambda}=T_1$}.

In the domain $s\in\left(0,\bar{s}^{\lambda}\right)$, we have
$L^{\lambda}(s)<U^{\lambda}$. Introduce an $\mathbbm{F}$-stopping
time
\[\theta^{\lambda}:=\inf\{u\geq 0:S_u^s\geq \bar{s}^{\lambda}\}.\]
Then, it follows from the dynamic programming principle that
\begin{eqnarray*}
\overline{v}^{\lambda}(s)&=&\inf_{\sigma\in\tilde{\mathcal{R}}_{T_1}(\lambda)}\sup_{\tau\in\tilde{\mathcal{R}}_{T_1}(\lambda)}\mathbbm{E}\left[\int_0^{\sigma\wedge
\tau\wedge \theta^{\lambda}} e^{-ru}c\,du+
e^{-r\theta^{\lambda}}{v}^{\lambda}\left(S^s_{\theta^{\lambda}}\right)\mathbbm{1}_{\{\sigma
\wedge \tau\geq \theta^{\lambda}\}}
\right.\\
&&+\left(e^{-r\tau}\gamma
S^s_{\tau}\mathbbm{1}_{\{\tau\leq\sigma\}}+e^{-r\sigma}K\mathbbm{1}_{\{\sigma<\tau\}}\right)\mathbbm{1}_{\{\sigma
\wedge \tau<\theta^{\lambda}\}}\left. \right].
\end{eqnarray*}
By the definition of the stopping time $\theta^{\lambda}$,
${v}^{\lambda}\left(S^s_{\theta^{\lambda}}\right)={v}^{\lambda}\left(\bar{s}^{\lambda}\right)=L^{\lambda}(\bar{s}^{\lambda})=U^{\lambda}$.
Thus, in the {domain} $s\in(0,\bar{s}^{\lambda})$,
(\ref{upperValues_pcb})-(\ref{lowerValues_pcb}) are equivalent to
\begin{equation}
\label{upperValues_pcb_revised}
\overline{v}^{\lambda}(s)=\inf_{\sigma\in\tilde{\mathcal{R}}_{T_1}(\lambda)}\sup_{\tau\in\tilde{\mathcal{R}}_{T_1}(\lambda)}\mathbbm{E}\left[\tilde{P}(s;\sigma,\tau)\right],
\end{equation}
\begin{equation}
\label{lowerValues_pcb_revised}
\underline{v}^{\lambda}(s)=\sup_{\tau\in\tilde{\mathcal{R}}_{T_1}(\lambda)}\inf_{\sigma\in\tilde{\mathcal{R}}_{T_1}(\lambda)}\mathbbm{E}\left[\tilde{P}(s;\sigma,\tau)\right],
\end{equation}
where the payoff $\tilde{P}(s;\sigma,\tau)$ is
\begin{equation*}
\label{convertible_bond_payoff_revised} \int_0^{\sigma\wedge
\tau\wedge \theta^{\lambda}}
e^{-ru}c\,du+e^{-r\theta^{\lambda}}U^{\lambda}\mathbbm{1}_{\{\sigma\wedge\tau\geq
\theta^{\lambda}\}}+e^{-r\tau}\gamma
S^s_{\tau}\mathbbm{1}_{\{\tau<\theta^{\lambda},\tau\leq\sigma\}}+e^{-r\sigma}K\mathbbm{1}_{\{\sigma<\theta^{\lambda},\sigma<\tau\}}.
\end{equation*}
Note that if we introduce the $\mathbbm{G}$-stopping time
\begin{equation}\label{T_M}
T_M:=\inf\{T_N\geq \theta^{\lambda}:N\geq 1\},
\end{equation}
since the payoff function $\tilde{P}(s;\sigma,\tau)$ does not change
after $T_M$, we may replace the control set
$\tilde{R}_{T_1}(\lambda)$ in
(\ref{upperValues_pcb_revised})-(\ref{lowerValues_pcb_revised}) with
$\mathcal{R}_{T_1}(\lambda)$, the latter of which consists of
$\mathbb{G}$-stopping times $T_1,T_2,\cdots, T_{M}$.

Now, we apply Theorem \ref{bigTheorem} with $T=\theta^{\lambda}$,
$L_t=\gamma S_t^s$, $U_t=K$, $f_t=c$ and $\xi=U^{\lambda}$ to
(\ref{upperValues_pcb_revised})-(\ref{lowerValues_pcb_revised}), and
obtain the existence of the value of the convertible bond in the
domain $s\in(0,\bar{s}^{\lambda})$.
\end{proof}

Thanks to the above proposition, \emph{we focus our analysis to the
domain $s\in\left(0,\bar{s}^{\lambda}\right)$ in the rest of this
section}. We characterize the value of the convertible bond and the
corresponding optimal stopping strategy via the solution of ODEs and
the associated free boundaries, respectively.



\begin{proposition}
\label{vi_pcb_thm} Suppose {that} Assumption \ref{assumption_pcb}
holds. Define the infinitesimal generator
$\mathcal{L}_0=\frac{1}{2}\sigma^2s^2\partial^2_{ss}+(r-q)s\partial_s-r.$
For $s\in(0,\bar{s}^{\lambda})$, the value of the convertible bond
$v^{\lambda}(s)$ is the unique solution to the following ODEs:

(i) If $c>qK$, then $v^{\lambda}(s)>\gamma s$, and
\begin{equation}
\label{vi_pcb_eq2}
-\mathcal{L}_0v^{\lambda}=c-\lambda(v^{\lambda}-K)^+
\end{equation}
with the boundary condition
$v^{\lambda}(\bar{s}^{\lambda})=U^{\lambda}$;

(ii) If $c<rK $, then $v^{\lambda}(s)<K$, and
\begin{equation}
\label{vi_pcb_eq3} -\mathcal{L}_0v^{\lambda}=c+\lambda(\gamma
s-v^{\lambda})^+
\end{equation}
with the boundary condition
$v^{\lambda}(\bar{s}^{\lambda})=U^{\lambda}$.
\end{proposition}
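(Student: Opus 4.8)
The plan is to reduce the stochastic game to a deterministic ODE through the penalized-BSDE characterization of Theorem \ref{bigTheorem}, and then to show that in each of the two parameter regimes exactly one of the penalty terms is identically inactive, so that the full equation collapses to the stated reduced form.

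First I would carry out the Markovian reduction. By Proposition \ref{pcb_poisson_alwaysConvertLemma}, on the domain $s\in(0,\bar{s}^{\lambda})$ the constrained convertible bond coincides with the constrained Dynkin game of Section 2 under the identification $T=\theta^{\lambda}$, $L_t=\gamma S_t^s$, $U_t=K$, $f_t=c$ and $\xi=U^{\lambda}$. Theorem \ref{bigTheorem} then identifies its value with $V_0^{\lambda}$, where $(V^{\lambda},Z^{\lambda})$ solves the penalized BSDE (\ref{final_game_value}). Since $S$ is a time-homogeneous diffusion with generator $\tfrac12\sigma^2s^2\partial_{ss}^2+(r-q)s\partial_s$ and all the data are deterministic functions of $S$, the BSDE solution is Markovian, $V_t^{\lambda}=v^{\lambda}(S_t^s)$ for $t\le\theta^{\lambda}$. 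The standard Feynman--Kac correspondence between this penalized BSDE and its associated semilinear elliptic equation yields that $v^{\lambda}$ is a bounded solution of
\[
-\mathcal{L}_0 v^{\lambda}=c+\lambda(\gamma s-v^{\lambda})^+-\lambda(v^{\lambda}-K)^+,\qquad s\in(0,\bar{s}^{\lambda}),
\]
with boundary condition $v^{\lambda}(\bar{s}^{\lambda})=U^{\lambda}$, together with the boundedness condition at the degenerate boundary $s=0^{+}$ inherited from the a priori bounds $L^{\lambda}(s)\le v^{\lambda}(s)\le U^{\lambda}$ of Proposition \ref{pcb_poisson_alwaysConvertLemma}. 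Uniqueness among bounded solutions follows from the comparison principle for this semilinear operator, equivalently from the uniqueness of the solution to (\ref{final_game_value}), and transfers to each reduced equation.

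It then remains to eliminate one penalty term in each case, and here the two inequalities are immediate consequences of the bounds already established. In the case $c>qK$ I would use the lower bound $v^{\lambda}(s)\ge L^{\lambda}(s)=\tfrac{c}{r+\lambda}+\tfrac{\lambda}{q+\lambda}\gamma s$ and compute
\[
L^{\lambda}(s)-\gamma s=\frac{c}{r+\lambda}-\frac{q}{q+\lambda}\gamma s,
\]
which is positive precisely when $\gamma s<\tfrac{c(q+\lambda)}{q(r+\lambda)}$. Since $\gamma s\le\gamma\bar{s}^{\lambda}=\tfrac{(q+\lambda)K}{r+\lambda}$ on the domain and $\tfrac{(q+\lambda)K}{r+\lambda}<\tfrac{c(q+\lambda)}{q(r+\lambda)}$ is equivalent to $qK<c$, the hypothesis gives $v^{\lambda}(s)\ge L^{\lambda}(s)>\gamma s$ on $(0,\bar{s}^{\lambda}]$; hence $(\gamma s-v^{\lambda})^+\equiv 0$ and the full equation reduces to (\ref{vi_pcb_eq2}). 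Symmetrically, in the case $c<rK$ I would use the upper bound $v^{\lambda}(s)\le U^{\lambda}=\tfrac{c+\lambda K}{r+\lambda}$ and the identity $U^{\lambda}-K=\tfrac{c-rK}{r+\lambda}<0$, which forces $v^{\lambda}(s)\le U^{\lambda}<K$, so that $(v^{\lambda}-K)^+\equiv 0$ and the full equation reduces to (\ref{vi_pcb_eq3}).

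The main obstacle is the Markovian reduction of the first paragraph: one must verify rigorously that the value $V_0^{\lambda}$ from Theorem \ref{bigTheorem} is realized as $v^{\lambda}(S^s_{\cdot})$ for a deterministic function, and that this function solves the stated ODE in the appropriate sense with the correct boundary data at $\bar{s}^{\lambda}$ and the right boundedness condition at the singular point $s=0$. Some care is needed with regularity, since the driver contains the nonsmooth terms $(\gamma s-v^{\lambda})^+$ and $(v^{\lambda}-K)^+$, so $v^{\lambda}$ is only $C^{1}$ across the sets where $v^{\lambda}=\gamma s$ or $v^{\lambda}=K$ and $C^{2}$ off them; in the two regimes treated here one of these kink sets is empty on the interior. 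Once the reduction is in place, the two inequalities and the resulting collapse of the equation are essentially algebraic, relying only on the bounds $L^{\lambda}\le v^{\lambda}\le U^{\lambda}$, and the uniqueness assertion is standard.
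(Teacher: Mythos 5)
Your proposal is correct and follows essentially the same route as the paper: identify $v^{\lambda}$ with the solution of the penalized BSDE (\ref{final_game_value_pcb}) via Theorem \ref{bigTheorem}, pass to the full semilinear ODE (\ref{vi_pcb_first_eq}) by the Markov property, and then use the a priori bounds $L^{\lambda}\le v^{\lambda}\le U^{\lambda}$ from Proposition \ref{pcb_poisson_alwaysConvertLemma} to show that $(\gamma s-v^{\lambda})^+\equiv 0$ when $c>qK$ and $(v^{\lambda}-K)^+\equiv 0$ when $c<rK$. Your algebra for the two bounds is a trivially rearranged version of the paper's, and your additional remarks on the Feynman--Kac regularity issues are sensible but not part of the paper's (brief) argument.
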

\begin{proof}
It is immediate from Theorem \ref{bigTheorem} and
(\ref{upperValues_pcb_revised})-(\ref{lowerValues_pcb_revised}) that
the convertible bond value is $v^{\lambda}(s)=V_0^{\lambda,s},$ for
$s\in(0,\bar{s}^{\lambda})$, where $V^{\lambda,s}$ is the first
component of the solution to the penalized BSDE
\begin{equation}
\label{final_game_value_pcb} V_{t\wedge
\theta^{\lambda}}^{\lambda,s}=U^{\lambda}+\int_{t\wedge
\theta^{\lambda}}^{\theta^{\lambda}}\left[c+\lambda\left(\gamma
S^s_u-V_u^{\lambda,s}\right)^+-\lambda\left(V_u^{\lambda,s}-K\right)^+-rV_u^{\lambda,s}\right]du-\int_{t\wedge
\theta^{\lambda}}^{\theta^{\lambda}}Z_u^{\lambda,s}\,dW_u.
\end{equation}
Moreover, the optimal {stopping strategy is}
\begin{equation}
\label{optimalStop_pcb} \left\{\begin{array}{l}
\sigma^{*,\lambda}=\inf\{T_N\geq T_1:V_{T_N}^{\lambda,s}\geq K\}\wedge T_M;\\
\tau^{*,\lambda}=\inf\{T_N\geq T_1:V_{T_N}^{\lambda,s}\leq \gamma
S^s_{T_N}\}\wedge T_M,
\end{array}\right.
\end{equation}
with $T_M$ given in (\ref{T_M}).

On the other hand, by the Markov property of the stock price $S$,
$V^{\lambda,s}_t=v^{\lambda}(S_t^s)$. In turn, It\^o's formula
further implies that
\begin{equation}\label{Ito}
v^{\lambda}(S^s_{\theta^{\lambda}})-v^{\lambda}(S^s_{t\wedge
\theta^{\lambda}})=\int_{t\wedge\theta^{\lambda}}^{\theta^{\lambda}}\left[\mathcal{L}_0v^{\lambda}(S_u^s)
+rv^{\lambda}(S_u^s)\right]du+\int_{t\wedge\theta^{\lambda}}^{\theta^{\lambda}}\sigma
s\partial_{s}v^{\lambda}(S_u^s)dW_u.
\end{equation}
It then follows from (\ref{final_game_value_pcb}) and (\ref{Ito})
that $v^{\lambda}(s)$, for $s\in(0,\bar{s}^{\lambda})$, solves the
ODE
\begin{equation}
\label{vi_pcb_first_eq} -\mathcal{L}_0v^{\lambda}=c+\lambda(\gamma
s-v^{\lambda})^+-\lambda(v^{\lambda}-K)^+,
\end{equation}
with the boundary condition
$v^{\lambda}(\bar{s}^{\lambda})=U^{\lambda}.$ Note that if $c<rK$,
Proposition \ref{pcb_poisson_alwaysConvertLemma} yields
\[v^{\lambda}(s)\leq U^{\lambda}=\frac{c+\lambda K}{r+\lambda}< \frac{rK+\lambda K}{r+\lambda}=K,\]
and if $c>qK$, it follows that
\[v^{\lambda}(s)\geq L^{\lambda}(s)=\frac{c}{r+\lambda}+ \frac{\lambda}{q+\lambda}\gamma s>\frac{qK}{r+\lambda}+ \frac{\lambda}{q+\lambda}\gamma s > \gamma s.\]
The ODEs (\ref{vi_pcb_eq2})-(\ref{vi_pcb_eq3}) then follow
immediately.
\end{proof}

The rest of this section is devoted to the characterization of the
optimal stopping strategy of the constrained convertible bond via
its associated free boundaries.


\subsection{The Case I: $qK< c<rK$}
\label{section_poisson_1}

From Proposition \ref{vi_pcb_thm}, when $qK<c<rK$, we always have
$\gamma s< v^{\lambda}(s)<K.$ Thus, following from
(\ref{optimalStop_pcb}), the optimal stopping strategy is
$$\tau^{*,\lambda}=\sigma^{*,\lambda}=T_{M}.$$

Intuitively, when the coupon rate $c$ satisfies $c<rK$, i.e.
$\frac{c}{r}< K$, the firm shall never spend $K$ to call the bond
back, since it only needs to pay the coupon rate $c$ as a perpetual
bond, whose value is $\frac{c}{r}$. Thus, the firm shall never call
until $T_M$.

When the coupon rate $c$ satisfies $c>qK$, i.e. $c> qK > q
\frac{r+\lambda}{q+\lambda}\gamma s > q\gamma s$, the investor shall
never convert her bond into stocks, since the stock dividends she
will receive by holding $\gamma$ shares of the stock are no more
than what she would otherwise receive from the bond coupons. Thus,
the investor shall never {convert} until $T_M$.

In Figure \ref{fig_simulation1}, the bold horizontal line
{$\bar{s}^{\lambda}$} represents the conversion and calling
boundary. We simulate three Poisson times $T_1=0.3$, $T_2=0.5$,
$T_3=0.8$, and two stock price paths. {The investor (and the firm)
will convert (and call) the bond at $T_1$ for the stock path 1.}
They will continue at $T_1$ and $T_2$, and terminate the contract at
$T_3$ for the stock path 2.

We further calculate the convertible bond value by solving the
corresponding ODE explicitly. Note that in such a situation,
$v^{\lambda}=v^{1,\lambda}$ solves
\begin{equation}
\label{vi_pcb_case1_eq} \left\{\begin{array}{rll}
-\mathcal{L}_0v^{1,\lambda}-c&=&0,\ \text{for}\ 0<s<\bar{s}^{\lambda};\\
\displaystyle v^{1,\lambda}(0+)&=&\displaystyle\frac{c}{r};\\
v^{1,\lambda}(\bar{s}^{\lambda})&=&\displaystyle U^{\lambda}.
\end{array}\right.
\end{equation}
We put the perpetual bond value $\frac{c}{r}$ at the boundary
$v^{1,\lambda}(0+):=\lim_{s\downarrow 0}v^{1,\lambda}(s)$, because
in such a situation, there is no motivation for the firm to call or
for the investor to convert the bond.

The general solution of (\ref{vi_pcb_case1_eq}) has the form
$v^{1,\lambda}(s)=A_+s^{\alpha^+}+A_-s^{\alpha^-}+\frac{c}{r}$, for
$0<s<\bar{s}^{\lambda}$, where
\begin{equation}\label{alpha}
\alpha^{\pm}=\frac{-(r-q-\frac{\sigma^2}{2})\pm\sqrt{(r-q-\frac{\sigma^2}{2})^2+2r\sigma^2}}{\sigma^2}.
\end{equation} Since $\alpha^-<0$, we obtain $A_-=0$ by the boundary condition at
$v^{1,\lambda}(0+)$. Using the other boundary condition, we further
obtain
\begin{equation}
\label{vi_pcb_solution_case1_eq1}
v^{1,\lambda}(s)=A^{1,\lambda}s^{\alpha}+\frac{c}{r},
\end{equation}
where $\alpha=\alpha^+$ and $
A^{1,\lambda}=\frac{\lambda}{r+\lambda}\frac{rK-c}{r}\left(\bar{s}^{\lambda}\right)^{-\alpha}.$

In Figure 2, we further plot the value function $v^{1,\lambda}(s)$,
which always stays between $[L^{\lambda}(s),U^{\lambda}]$ for
$s\in(0,\bar{s}^{\lambda})$. Since $L^{\lambda}>\gamma s$ and
$U^{\lambda}<K$, the value function also stays between $(\gamma s,
K)$, which means it is never optimal for the firm or the investor to
stop in the region $s\in(0,\bar{s}^{\lambda})$.


%

\subsection{The Case II: $c\geq rK$}
\label{section_poisson_2}

It is obvious that $c>qK$ if $c\geq rK$. Thus, from Proposition
\ref{vi_pcb_thm},  we always have $v^{\lambda}(s)>\gamma s,$ and
following from (\ref{optimalStop_pcb}), the optimal conversion
strategy for the investor is
$$\tau^{*,\lambda}=T_M,$$
i.e. it is never optimal for the investor to convert until $T_M$.
Instead, the investor's optimal strategy is to keep the convertible
bond to receive its coupons (up to $T_M$).

On the other hand, following from (\ref{vi_pcb_eq2}),
$v^{\lambda}=v^{2,\lambda}$ solves
\begin{equation}
\label{vi_pcb_case2_eq} \left\{\begin{array}{rll}
-\mathcal{L}_0v^{2,\lambda}-c+\lambda(v^{2,\lambda}-K)^+&=&0,\ \text{for}\ 0< s<\bar{s}^{\lambda};\\
v^{2,\lambda}(0+)&=&\displaystyle U^{\lambda};\\
v^{2,\lambda}(\bar{s}^{\lambda})&=&\displaystyle U^{\lambda}.
\end{array}\right.
\end{equation}
We {put} $U^{\lambda}$ at the boundary
$v^{2,\lambda}(0+):=\lim_{s\downarrow 0}v^{2,\lambda}(s)$. In this
situation, since the coupon rate $c$ is too large, the firm would
prefer to convert as soon as possible to stop paying the bond
coupons. It is clear that $v^{2,\lambda}(s)=U^{\lambda}\geq K. $ In
turn, by (\ref{optimalStop_pcb}), it is optimal for the firm to call
as soon as possible, i.e. at the first Poisson arrival time
$$\sigma^{*,\lambda}=T_1.$$



In Figure 3, the bold horizontal line ${\bar{s}^{\lambda}}$
represents the conversion boundary for the investor. Once again, we
simulate three Poisson times ${T_1=0.25}$, $T_2=0.5$, $T_3=0.8$, and
two stock price paths. For the stock price path 1, the firm will
call the bond at $T_1$ firstly, and for the stock price path 2, both
the firm and the investor will terminate the contract at $T_1$.

Figure 4 further plots the value function $v^{2,\lambda}$, which is
a constant $U^{\lambda}$ for $s\in(0,\bar{s}^{\lambda})$. Since the
value function always stays above $K$, and therefore also above
$\gamma s$, it is never optimal for the investor to convert in the
region $(0,\bar{s}^{\lambda})$.

\subsection{The Case III: $c\leq qK$}
\label{section_poisson_3}

It is obvious that $c<rK$ if $c\leq qK$. Thus, from Proposition
\ref{vi_pcb_thm},  we always have $v^{\lambda}(s)<K,$ and following
from (\ref{optimalStop_pcb}), the optimal calling time for the firm
is
$$\sigma^{*,\lambda}=T_M,$$
i.e. it is never optimal for the firm to call until $T_M$.
Furthermore, following from (\ref{vi_pcb_eq3}),
$v^{\lambda}=v^{3,\lambda}$ solves
\begin{equation}
\label{vi_pcb_case3_eq} \left\{\begin{array}{rll}
-\mathcal{L}_0v^{3,\lambda}-c-\lambda(\gamma
s-v^{3,\lambda})^+&=&0,\ \text{for}\ 0< s<\bar{s}^{\lambda};\\
v^{3,\lambda}(0+)&=&\displaystyle \frac{c}{r};\\
v^{3,\lambda}(\bar{s}^{\lambda})&=&\displaystyle U^{\lambda}.
\end{array}\right.
\end{equation}

Next, we solve (\ref{vi_pcb_case3_eq}) explicitly. Since $c\leq qK$,
the intersection point of the lower bound $L^{\lambda}(s)$ of the
convertible bond value and the investor's payoff function $\gamma s$
is no greater than $\bar{s}^{\lambda}$ (so $\gamma s$ is no less
than $L^{\lambda}(s)$ between this intersection point and
$\bar{s}^{\lambda}$). Thus, it may happen that, in the region
$s\in(0,\bar{s}^{\lambda})$, the investor converts the bond earlier
than $T_M$. Since $v^{3,\lambda}(s)>\gamma s$ when $s\downarrow 0$,
and $v^{3,\lambda}(s)\leq \gamma s$ for $s=\bar{s}^{\lambda}$, we
define
\begin{equation}
\label{def_x*_lambda}
{x^{*,\lambda}=\inf\left\{s\in(0,\bar{s}^{\lambda}]:
v^{3,\lambda}(s) \leq \gamma s\right\}.}
\end{equation}
{By definition it is obvious $v^{3,\lambda}>\gamma s$ for $s\in
(0,x^{*,\lambda})$, and by the continuity of $v^{3,\lambda}(\cdot)$,
$v^{3,\lambda}(x^{*,\lambda})=\gamma x^{*,\lambda}$. Let us at the
moment assume that $v^{3,\lambda}\leq \gamma s$ for $s\in
(x^{*,\lambda},\bar{s}^{\lambda}]$. Later, we will verify this
condition. If this condition holds, (\ref{vi_pcb_case3_eq}) is
equivalent to the following free boundary problem}
\begin{eqnarray}
-\mathcal{L}_0v^{3,\lambda}-c&=&0,\ \text{for}\ 0<s<x^{*,\lambda};\label{vi_pcb_case3_eq2} \\
-\mathcal{L}_0v^{3,\lambda}-c+\lambda(v^{3,\lambda}-\gamma s)&=&0,\ \text{for}\ x^{*,\lambda}<s<\bar{s}^{\lambda};\label{vi_pcb_case3_eq3}\\
v^{3,\lambda}(0+)&=&\frac{c}{r};\label{vi_pcb_case3_eq1}\\
v^{3,\lambda}(\bar{s}^{\lambda})&=&U^{\lambda};\label{vi_pcb_case3_eq7}\\
v^{3,\lambda}(x^{*,\lambda}-)&=&\gamma x^{*,\lambda};\label{vi_pcb_case3_eq4}\\
v^{3,\lambda}(x^{*,\lambda}+)&=&\gamma x^{*,\lambda};\label{vi_pcb_case3_eq5}\\
\left(v^{3,\lambda}\right)'(x^{*,\lambda}-)&=&\left(v^{3,\lambda}\right)'(x^{*,\lambda}+).\label{vi_pcb_case3_eq6}
\end{eqnarray}
We first observe that, with the boundary condition
(\ref{vi_pcb_case3_eq1}),  ODEs
(\ref{vi_pcb_case3_eq2})-(\ref{vi_pcb_case3_eq3}) imply
\begin{equation}
\label{vi_pcb_solution_case3_eq1}
v^{3,\lambda}(s)=\left\{\begin{array}{ll}
A^{3,\lambda}s^{\alpha}+\frac{c}{r},&\mbox{ if }s\in(0,x^{*,\lambda});\\
B_+s^{\beta^+}+B_-s^{\beta^-}+\frac{c}{r+\lambda}+\frac{\lambda}{q+\lambda}\gamma
s,&\mbox{ if }s\in(x^{*,\lambda},\bar{s}^{\lambda}),
\end{array}\right.
\end{equation}
where $\alpha=\alpha^+$ is given in (\ref{alpha}),
\begin{equation}
\label{beta}
\beta^{\pm}=\frac{-(r-q-\frac{\sigma^2}{2})\pm\sqrt{(r-q-\frac{\sigma^2}{2})^2+2(r+\lambda)\sigma^2}}{\sigma^2},
\end{equation}
and four unknowns $(A^{3,\lambda},B_+,B_-,x^{*,\lambda})$ are to be
determined. Using the continuity across $x^{*,\lambda}$, i.e.
(\ref{vi_pcb_case3_eq4})-(\ref{vi_pcb_case3_eq5}), the smooth
pasting across $x^{*,\lambda}$, i.e. (\ref{vi_pcb_case3_eq6}), and
the boundary condition at $s=\bar{s}^{\lambda}$, i.e.
(\ref{vi_pcb_case3_eq7}), we obtain that
$x^{*,\lambda}\in\left(0,\bar{s}^{\lambda}\right]$ is the (unique)
solution to the following algebraic equation
\begin{equation}
\label{complicated_eq_x*}
C_1x^{\beta^+-\beta^-+1}+C_2x^{\beta^+-\beta^-}+C_3x+C_4=0,
\end{equation}
with
\begin{equation}
\label{vi_pcb_solution_case3_eq_para} \left\{\begin{array}{l}
C_1=\left(\alpha-\frac{\lambda}{q+\lambda}-\frac{q}{q+\lambda}\beta^+\right)\gamma;\\
C_2=-\left(\alpha\frac{c}{r}-\frac{c}{r+\lambda}\beta^+\right);\\
C_3=-\left(\alpha-\frac{\lambda}{q+\lambda}-\frac{q}{q+\lambda}\beta^-\right)(\bar{s}^{\lambda})^{\beta^+-\beta^-}\gamma;\\
C_4=\left(\alpha\frac{c}{r}-\frac{c}{r+\lambda}\beta^-\right)(\bar{s}^{\lambda})^{\beta^+-\beta^-},
\end{array}\right.
\end{equation}
and the coefficients are determined by
\begin{equation}
\label{vi_pcb_solution_case3_parameters} \left\{\begin{array}{rll}
A^{3,\lambda}&=&\left(x^{*,\lambda}\right)^{-\alpha}\left(\gamma x^{*,\lambda}-\frac{c}{r}\right);\\
B_+&=&\frac{\frac{q}{q+\lambda}\gamma x^{*,\lambda}-\frac{c}{r+\lambda}}{\left(x^{*,\lambda}\right)^{\beta^+}-\left(\bar{s}^{\lambda}\right)^{\beta^+-\beta^-}\left(x^{*,\lambda}\right)^{\beta^-}};\\
B_-&=&\frac{\frac{q}{q+\lambda}\gamma
x^{*,\lambda}-\frac{c}{r+\lambda}}{\left(x^{*,\lambda}\right)^{\beta^-}-\left(\bar{s}^{\lambda}\right)^{\beta^--\beta^+}\left(x^{*,\lambda}\right)^{\beta^+}}.
\end{array}\right.
\end{equation}

{It remains to verify the condition $v^{3,\lambda}\leq \gamma s$ for
$s\in (x^{*,\lambda},\bar{s}^{\lambda}]$. Indeed, since
$A^{3,\lambda}>0$, $\alpha>1$, $B_+<0$, $\beta^+>1$ and $B_->0$,
$\beta^-<0$, it is clear that $v^{3,\lambda}$ is convex in the
interval $(0,x^{*,\lambda})$ and concave in the interval
$(x^{*,\lambda},\bar{s}^{\lambda}]$. Moreover,
$\left(v^{3,\lambda}\right)'(x^{*,\lambda})<\gamma$. This verifies
the condition.}

The optimal {conversion} time for the investor is therefore given as
$$\tau^{*,\lambda}=\inf\{T_N: S^s_{T_N}\geq x^{*,\lambda}\}\wedge T_M.$$


%

In Figure 5, the top bold horizontal line $\bar{s}^{\lambda}$
represents the calling boundary for the firm, and the bottom bold
horizontal line $x^{*,\lambda}$ represents the conversion boundary
for the investor. Once again, we simulate three Poisson times
$T_1=0.3$, $T_2=0.5$, $T_3=0.8$, and two stock price paths. For the
stock price path 1, {both the investor and the firm will terminate
the contract at $T_1$; and for the stock path 2, the investor will
continue at $T_1$ and convert at $T_2$, while the firm will not call
the bond back at neither $T_1$ nor $T_2$}.

In Figure 6, we further plot the value function $v^{3,\lambda}$,
which crosses the payoff function $\gamma s$ in the region
$(0,\bar{s}^{\lambda}]$, so the crossing point $x^{*,\lambda}$ is
the optimal conversion boundary for the investor. Furthermore, the
value function $v^{3,\lambda}$ is strictly dominated by $K$ for
$s\in(0,\bar{s}^{\lambda})$, so the firm never calls the bond back
in this region.

\section{Asymptotics as $\lambda\to\infty$}

We study the asymptotic behavior of the convertible bond price and
its associated free boundaries when the Poisson intensity
$\lambda\rightarrow \infty$. Intuitively, they will converge to
their continuous time counterparts. We prove this intuition in this
section.


\subsection{Review of standard convertible bonds}
The setting is the same as in section \ref{pcb_poisson_section}
except that both the investor and the firm choose their respective
optimal stopping strategies as $\mathbbm{F}$-stopping times taking
values in $[0,\infty]$. Then, the upper and lower values of the
standard convertible bond are given by
\begin{equation}
\label{upperValues_pcb_cts}
\overline{v}=\inf_{\sigma\in\tilde{\mathcal{R}}_0}\sup_{\tau\in\tilde{\mathcal{R}}_0}\mathbbm{E}\left[P(s;\sigma,\tau)\right],
\end{equation}
\begin{equation}
\label{lowerValues_pcb_cts}
\underline{v}=\sup_{\tau\in\tilde{\mathcal{R}}_0}\inf_{\sigma\in\tilde{\mathcal{R}}_0}\mathbbm{E}\left[P(s;\sigma,\tau)\right],
\end{equation}
and the control set $\tilde{\mathcal{R}}_0$ is defined as
\[\tilde{\mathcal{R}}_0=\{\mathbbm{F}\mbox{-stopping time }\tau\mbox{ for }\tau\geq 0\}.\]

We say this game has value $v$ if $v=\overline{v}=\underline{v}$,
and has a saddle point $(\sigma^*,\tau^*)\in
\tilde{\mathcal{R}}_0\times \tilde{\mathcal{R}}_0$ if
$\mathbbm{E}\left[P(s;\sigma^*,\tau)\right]\leq
\mathbbm{E}\left[P(s;\sigma^*,\tau^*)\right]\leq
\mathbbm{E}\left[P(s;\sigma,\tau^*)\right]$ for every
$(\sigma,\tau)\in \tilde{\mathcal{R}}_0\times
\tilde{\mathcal{R}}_0$.

The proof of the following result follows along the similar
arguments in \cite{yan2015dynkin} and is thus omitted. We refer to
\cite{yan2015dynkin} for its further details.

\begin{proposition}
\label{thm_pcb_cts} Suppose that Assumption \ref{assumption_pcb}
holds. Let $\bar{s}:=\frac{K}{\gamma}$, and define an
$\mathbbm{F}$-stopping time $\theta=\inf\{u\geq 0:S_u^s\geq
\bar{s}\}.$ Then, the value of the standard convertible bond $v(s)$
is given as follows:

(i) The Case I: $qK<c<rK$,
\begin{equation}
\label{vi1_pcb_solution} v^{1}(s)=\left\{\begin{array}{ll}
A^{1}s^{\alpha}+\frac{c}{r},&\mbox{ if }s\in(0,\bar{s});\\
\gamma s,&\mbox{ if }s\in[\bar{s},\infty),
\end{array}\right.
\end{equation}
with $\alpha=\alpha^+$ as in (\ref{alpha}) and
$A^1=\frac{rK-c}{r}(\bar{s})^{-\alpha}.$ The optimal stopping
strategy is given by
\begin{equation}
\label{vi1_pcb_solution_optimal} \sigma^{*}=\tau^{*}=\theta.
\end{equation}

(ii) The Case II: $c\geq rK$,
\begin{equation}
\label{vi2_pcb_solution} v^{2}(s)=\left\{\begin{array}{ll}
K,&\mbox{ if }s\in(0,\bar{s});\\
\gamma s,&\mbox{ if }s\in[\bar{s},\infty).
\end{array}\right.
\end{equation}
The optimal stopping strategy is given by
\begin{equation}
\label{vi2_pcb_solution_optimal} \sigma^{*}=0;\quad \tau^{*}=\theta.
\end{equation}

(iii) The Case III: $c\leq qK$,
\begin{equation}
\label{vi3_pcb_solution} v^{3}(s)=\left\{\begin{array}{ll}
A^{3}s^{\alpha}+\frac{c}{r},&\mbox{ if }s\in(0,x^{3});\\
\gamma s,&\mbox{ if }s\in[x^{3},\infty),
\end{array}\right.
\end{equation}
with  $\alpha=\alpha^+$ and $A^3=\left(\gamma
x^3-\frac{c}{r}\right)(x^3)^{-\alpha}.$ The optimal stopping
strategy is given by
\begin{equation}
\label{vi3_pcb_solution_optimal} \sigma^{*}=\theta,\quad
\tau^{*}=\inf\{t\geq 0:S^s_t\geq x^3\},
\end{equation}
where
\[x^3=\left\{\begin{array}{ll}
x^*:=\frac{\alpha}{\alpha-1}\frac{c}{\gamma r},&\mbox{if }c\leq \frac{\alpha-1}{\alpha}rK;\\
\bar{s},&\mbox{if }c>\frac{\alpha-1}{\alpha}rK.
\end{array}\right.\]
\end{proposition}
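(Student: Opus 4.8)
The plan is to exploit the reflected-BSDE characterisation already in hand. By Proposition \ref{prop_connection_cts} and the well-posedness of the doubly reflected BSDE (\ref{final_game_value_cts}), the value $v(s)$ equals $V_0$ for the solution driven by running payoff $c$, discount $-rV$, lower barrier $L_t=\gamma S_t^s$ and upper barrier $U_t=K$, absorbed at $\theta=\inf\{u:S_u^s\geq\bar s\}$ with terminal value $\gamma S_\theta^s=K$ (the two barriers coincide at $\bar s=K/\gamma$). By the Markov property of $S$, $v$ is then the unique solution of the associated double-obstacle variational inequality (VI): $\gamma s\leq v\leq K$, with $-\mathcal{L}_0 v=c$ on $\{\gamma s<v<K\}$, $-\mathcal{L}_0 v-c\geq 0$ on $\{v=\gamma s\}$, and $-\mathcal{L}_0 v-c\leq 0$ on $\{v=K\}$. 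It therefore suffices to construct an explicit solution of this VI in each case and then run a verification argument.

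For the construction, the ODE $-\mathcal{L}_0 v=c$ has general solution $A_+s^{\alpha^+}+A_-s^{\alpha^-}+c/r$, with $\alpha^{\pm}$ as in (\ref{alpha}); the condition $v(0+)=c/r$ forces $A_-=0$ since $\alpha^-<0$, while $\alpha:=\alpha^+$ satisfies $\alpha>1$ (the defining quadratic $g$ has $g(1)=-q<0$) and, crucially, $\alpha<r/(r-q)$ (since $g(r/(r-q))=\tfrac{1}{2}\sigma^2\tfrac{r}{r-q}(\tfrac{r}{r-q}-1)>0$). In Case I ($qK<c<rK$) neither barrier binds on $(0,\bar s)$, so I match $v^1=As^{\alpha}+c/r$ to $v^1(\bar s)=K$, giving $A^1$. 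In Case II ($c\geq rK$) the perpetual-coupon value $c/r\geq K$ forces the firm to call at once, so $v^2\equiv K$. In Case III ($c\leq qK$) the investor converts at a genuine free boundary $x^3$, obtained from value matching and smooth fit, $v^3(x^3)=\gamma x^3$ and $(v^3)'(x^3)=\gamma$, which yield $x^*=\tfrac{\alpha}{\alpha-1}\tfrac{c}{\gamma r}$ and $A^3$, with $x^3=x^*$ when $x^*\leq\bar s$ (equivalently $c\leq\tfrac{\alpha-1}{\alpha}rK$) and $x^3=\bar s$ otherwise.

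Next I verify each candidate solves the full VI. The bound $v\leq K$ follows from monotonicity ($A>0$, $\alpha>0$) and the endpoint values. For $v\geq\gamma s$ I argue probabilistically: Itô on $e^{-rt}\gamma S_t^s$ shows $e^{-rt}\gamma S_t^s+\int_0^t e^{-ru}q\gamma S_u^s\,du$ is a martingale, so whenever $q\gamma S_u^s\leq c$ on the relevant region (which holds once $c\geq qK\geq q\gamma s$ in Cases I–II, and on $(0,x^3)$ in Case III) one gets $\gamma s\leq v(s)$. The generator sign conditions reduce to the case inequalities: on $\{v=K\}$, $-\mathcal{L}_0 K-c=rK-c$, negative exactly when $c\geq rK$ (Case II); on $\{v=\gamma s\}$, $-\mathcal{L}_0(\gamma s)-c=q\gamma s-c\geq0$, which on the conversion interval $(x^3,\bar s]$ of Case III holds iff $c\leq q\gamma x^*$, i.e. $\alpha(r-q)\leq r$ — precisely $\alpha<r/(r-q)$. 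Uniqueness of the VI solution (inherited from the RBSDE) then identifies the candidate with $v$. The optimal-strategy claims follow by a standard verification: applying Itô to $\Gamma_t=e^{-rt}v(S_t^s)+\int_0^t e^{-ru}c\,du$ and using the VI signs, $\Gamma$ is a local martingale on the continuation set with the one-sided super-/sub-martingale behaviour on each obstacle region, so optional stopping at the candidate times and at arbitrary unilateral deviations yields the two saddle inequalities and the identity $v(s)=\mathbbm{E}\left[P(s;\sigma^*,\tau^*)\right]$.

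I expect the main obstacle to be Case III. Two points need care: first, showing the smooth-fit system has a unique admissible root and that $v^3$ stays between the barriers, for which the convexity of $A^3s^{\alpha}+c/r$ (from $\alpha>1$, $A^3>0$) combined with $(v^3)'(x^3)=\gamma$ must give $v^3>\gamma s$ to the left of $x^3$ and the VI sign to its right; second, establishing $\alpha<r/(r-q)$ from the quadratic in (\ref{alpha}), since the entire consistency of the free-boundary picture — in particular $c\leq q\gamma x^*$ on the conversion region and the dichotomy $x^*\leq\bar s\iff c\leq\tfrac{\alpha-1}{\alpha}rK$ — rests on it. The other two cases are comparatively mechanical, being a single linear ODE with fixed endpoints (Case I) or a constant solution (Case II).
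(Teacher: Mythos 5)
The paper does not actually prove this proposition: it is stated with the remark that the proof ``follows along the similar arguments in \cite{yan2015dynkin} and is thus omitted,'' so there is no in-paper argument to compare you against. Your route --- reduce to the double-obstacle variational inequality on $(0,\bar s)$ (where the barriers $\gamma s$ and $K$ meet at $\bar s$), solve $-\mathcal{L}_0 v=c$ explicitly with value matching and smooth fit, check the barrier and generator-sign conditions case by case, and close with an It\^o/optional-stopping verification of the two saddle inequalities --- is precisely the free-boundary methodology of the cited reference, and your case analysis reproduces the stated formulas. In particular you correctly isolate $1<\alpha<r/(r-q)$ as the inequality that simultaneously drives $c>qK\Rightarrow v>\gamma s$, the sign condition $q\gamma s\ge c$ on the conversion region, and the dichotomy $x^*\le\bar s\iff c\le\frac{\alpha-1}{\alpha}rK$. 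I regard the proposal as correct in substance.

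Two details need repair. First, your parenthetical claim that $q\gamma S_u\le c$ on $(0,x^3)$ in Case III is false near $x^3$: since $\alpha<r/(r-q)$ one has $q\gamma x^*=\frac{\alpha q}{(\alpha-1)r}\,c>c$, so the martingale comparison fails on $\bigl(c/(q\gamma),x^3\bigr)$; fortunately the convexity-plus-tangency argument you give afterwards ($v^3$ convex with $(v^3)'(x^*)=\gamma$ lies above its tangent line $\gamma s$) already covers that interval, so the probabilistic claim there should simply be dropped. Second, in the sub-case $c>\frac{\alpha-1}{\alpha}rK$ of Case III the boundary is $x^3=\bar s$ and smooth fit does \emph{not} hold, so ``$(v^3)'(x^3)=\gamma$'' is unavailable; what rescues $v^3\ge\gamma s$ is that $(v^3)'(\bar s-)=\alpha\gamma\bigl(1-\frac{c}{rK}\bigr)\le\gamma$ exactly when $c\ge\frac{\alpha-1}{\alpha}rK$, after which convexity puts $v^3$ above the line of slope $\gamma$ through $(\bar s,K)$. (The same computation, combined with $c>qK>\frac{\alpha-1}{\alpha}rK$, is what you need to make $v^1>\gamma s$ rigorous in Case I.) Finally, be aware that $\theta$ need not be finite when $r-q\le\sigma^2/2$; this integrability technicality is glossed over in your write-up exactly as it is in the paper's treatment of the constrained problem, and should be handled by noting that on $\{\theta=\infty\}$ the relevant discounted payoffs remain bounded and vanish in the limit.
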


\subsection{Asymptotics}
We conclude the paper by studying, when $\lambda\rightarrow\infty$,
(i) the convergence of the constrained convertible bond price
$v^{\lambda}$ to its continuous-time counterpart $v$; (ii) the
convergence of the optimal conversion/calling boundaries for the
constrained convertible bond to its continuous-time counterparts.

It is easy to check that $\bar{s}^{\lambda}\rightarrow\bar{s}$,
$A^{1,\lambda}\rightarrow A^1$, $L^{\lambda}(s)\rightarrow \gamma s$
and $U^{\lambda}\rightarrow K$ with the convergence rate $1/\lambda$
by using their explicit forms. As a consequence, we have
$$v^{1,\lambda}(s)\rightarrow v^1(s);\quad
v^{2,\lambda}(s)\rightarrow v^2(s),$$ with the convergence rate
$1/\lambda$. Hence, we only need to establish the convergence
results for Case III when $c\leq qK$.
To this end, we first establish the monotonic property of
$x^{*,\lambda}$, as defined in (\ref{def_x*_lambda}), with respect
to $\lambda$ in the following lemma.

\begin{proposition}
\label{x*_convergence_lemma} Suppose that Assumption
\ref{assumption_pcb} holds and that $c\leq qK$. Then,
$x^{*,\lambda}$  is non-decreasing with respect to $\lambda$.
\end{proposition}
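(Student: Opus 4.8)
The plan is to reduce the statement to a monotonicity property of the value function and then read off the boundary monotonicity from the first-crossing characterization of $x^{*,\lambda}$. The starting observation is that in Case III the firm never calls: since $v^{\lambda}(s)<K$ throughout $(0,\bar{s}^{\lambda})$, the optimal calling strategy in (\ref{optimalStop_pcb}) is $\sigma^{*,\lambda}=T_M$, so the constrained convertible bond degenerates into a pure constrained optimal stopping problem for the investor,
\[
v^{\lambda}(s)=\sup_{\tau\in\tilde{\mathcal{R}}_{T_1}(\lambda)}\mathbbm{E}\left[\int_0^{\tau}e^{-ru}c\,du+e^{-r\tau}\gamma S^s_{\tau}\right].
\]
In this representation the only $\lambda$-dependence sits in the admissible family of Poisson stopping times.

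First I would prove that $\lambda\mapsto v^{\lambda}(s)$ is non-decreasing for each fixed $s$. Heuristically this is clear: the investor is the sole maximizing player, and a larger intensity gives her more instants at which she may convert, so her value can only rise. To turn this into an argument for $\lambda<\lambda'$, I would couple the two Poisson clocks by thinning---marking each arrival of a rate-$\lambda'$ process as retained with probability $\lambda/\lambda'$ produces a rate-$\lambda$ subprocess---so that any rate-$\lambda$ conversion rule becomes admissible in the rate-$\lambda'$ problem with the same payoff, yielding $v^{\lambda}(s)\le v^{\lambda'}(s)$.

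A self-contained alternative, which I would actually carry out to avoid filtration bookkeeping, rests on the ODE characterization. Extending Proposition \ref{vi_pcb_thm} to $(0,\infty)$ by setting $v^{\lambda}=L^{\lambda}$ on $[\bar{s}^{\lambda},\infty)$ (one checks $L^{\lambda}$ solves the same equation there, using $c\le qK$), the function $v^{\lambda}$ is the unique solution of $-\mathcal{L}_0 w=c+\lambda(\gamma s-w)^+$ on $(0,\infty)$ with $w(0+)=c/r$ and linear growth $w(s)\sim\frac{\lambda}{q+\lambda}\gamma s$. Substituting $v^{\lambda}$ into the $\lambda'$-equation gives $\mathcal{L}_0 v^{\lambda}+c+\lambda'(\gamma s-v^{\lambda})^+=(\lambda'-\lambda)(\gamma s-v^{\lambda})^+\ge0$, so the difference $w:=v^{\lambda}-v^{\lambda'}$ satisfies an elliptic inequality $\tfrac12\sigma^2 s^2 w''+(r-q)sw'-(r+\lambda'm(s))w\ge0$ with $m(s)\in[0,1]$ and strictly negative zeroth-order coefficient. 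Since $w\to0$ as $s\downarrow0$ and $w\to-\infty$ as $s\to\infty$ (because $\frac{\lambda}{q+\lambda}<\frac{\lambda'}{q+\lambda'}$), the maximum principle forbids a positive interior maximum and forces $w\le0$, i.e. $v^{\lambda}\le v^{\lambda'}$ on $(0,\infty)$.

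With the monotonicity in hand the conclusion is immediate. Fix $\lambda<\lambda'$ and recall $x^{*,\lambda}=\inf\{s\in(0,\bar{s}^{\lambda}]:v^{\lambda}(s)\le\gamma s\}$, together with the fact (recorded in the Case III analysis) that $v^{\lambda}(s)>\gamma s$ strictly for every $s\in(0,x^{*,\lambda})$. For such $s$ we have $s<\bar{s}^{\lambda}<\bar{s}^{\lambda'}$, so $s$ lies in the domain of $v^{\lambda'}$ and $v^{\lambda'}(s)\ge v^{\lambda}(s)>\gamma s$; hence no point below $x^{*,\lambda}$ can belong to the crossing set defining $x^{*,\lambda'}$, which gives $x^{*,\lambda'}\ge x^{*,\lambda}$. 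I expect the genuine difficulty to be the value monotonicity rather than this last step: in the coupling argument one must argue that enriching the rate-$\lambda'$ filtration by the independent thinning marks leaves $v^{\lambda'}$ unchanged (true for a single maximizer, since randomization does not improve an optimal-stopping value), which is precisely why the maximum-principle route---where the only care needed is controlling the behavior of $w$ at the endpoints of the unbounded interval $(0,\infty)$---is the safer one to execute.
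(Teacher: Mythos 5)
Your proposal is correct and follows essentially the same route as the paper: reduce the boundary monotonicity to monotonicity of $v^{\lambda}$ in $\lambda$ via the first-crossing definition (\ref{def_x*_lambda}), then compare the semilinear ODEs for $v^{\lambda_1}$ and $v^{\lambda_2}$ through a maximum-principle argument exploiting $\lambda_1(\gamma s-v^{\lambda_1})^+-\lambda_2(\gamma s-v^{\lambda_2})^+\le\lambda_2\bigl[(\gamma s-v^{\lambda_1})^+-(\gamma s-v^{\lambda_2})^+\bigr]$. The only cosmetic difference is that the paper splits the comparison into $[\bar{s}^{\lambda_1},\infty)$ (an explicit computation with $L^{\lambda_1}-L^{\lambda_2}$ using $c\le qK$) and $(0,\bar{s}^{\lambda_1})$ (maximum principle on the set where the inequality could fail), which sidesteps the regularity of the pasted solution across $\bar{s}^{\lambda}$ that your global-domain version quietly assumes.
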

\begin{proof}
By the definition of $x^{*,\lambda}$ in (\ref{def_x*_lambda}), it is
sufficient to prove $v^{3,\lambda}$ is non-decreasing in $\lambda$.
Recall that $v^{3,\lambda}$ is the solution to the ODE
(\ref{vi_pcb_case3_eq}) in the domain $s\in(0,\bar{s}^{\lambda})$,
and $v^{3,\lambda}=L^{\lambda}$ in the domain
$s\in[\bar{s}^{\lambda},\infty)$.

Let us suppose $\lambda_1< \lambda_2$ and it is easy to check that
$\bar{s}^{\lambda_1}< \bar{s}^{\lambda_2}$. For $s\geq
\bar{s}^{\lambda_1}$, we have $v^{3,\lambda_1}=L^{\lambda_1}$. Then,
\begin{eqnarray*}
v^{3,\lambda_1}(s)-v^{3,\lambda_2}(s)&\leq & L^{\lambda_1}(s)-L^{\lambda_2}(s)\\
&=&\frac{c(\lambda_2-\lambda_1)}{(r+\lambda_1)(r+\lambda_2)}-\frac{q(\lambda_2-\lambda_1)}{(q+\lambda_1)(q+\lambda_2)}\gamma s\\
&\leq
&\frac{(q-r)qK(\lambda_2-\lambda_1)}{(r+\lambda_1)(q+\lambda_2)(r+\lambda_2)}<0.
\end{eqnarray*}

On the other hand, for $s<\bar{s}^{\lambda_1}$, note that
$v^{3,\lambda_1}(0+)=v^{3,\lambda_2}(0+)=\frac{c}{r}$ and
$v^{3,\lambda_1}(\bar{s}^{\lambda_1})<v^{3,\lambda_2}(\bar{s}^{\lambda_1})$.
Define the set
$\mathcal{N}=\left\{s\in\left(0,\bar{s}^{\lambda_1}\right):v^{3,\lambda_1}(s)>v^{3,\lambda_2}(s)\right\},$
and suppose that $\mathcal{N}\not=\emptyset$. Then on $\mathcal{N}$,
we have
\[\left\{\begin{array}{l}
-\mathcal{L}_0v^{3,\lambda_1}=c+\lambda_1(\gamma s-v^{3,\lambda_1})^+;\\
-\mathcal{L}_0v^{3,\lambda_2}=c+\lambda_2(\gamma
s-v^{3,\lambda_2})^+,
\end{array}\right.\]
which implies
\begin{eqnarray*}
-\mathcal{L}_0(v^{3,\lambda_1}-v^{3,\lambda_2})&=&\lambda_1(\gamma s-v^{3,\lambda_1})^+-\lambda_2(\gamma s-v^{3,\lambda_2})^+\\
&\leq &\lambda_2\left[(\gamma s-v^{3,\lambda_1})^+-(\gamma
s-v^{3,\lambda_2})^+\right]\leq 0.
\end{eqnarray*}
Hence, we have $v^{3,\lambda_1}\leq v^{3,\lambda_2}$ on
$\mathcal{N}$, which is in contradiction with the definition of
$\mathcal{N}$.
\end{proof}

Since $x^{*,\lambda}$ is bounded by $\bar{s}^{\lambda}(\leq
\bar{s})$, Proposition \ref{x*_convergence_lemma} then implies that
$\lim_{\lambda\to\infty}x^{*,\lambda}$ exists, denoted by
$x^{\infty}$. Moreover, by Proposition \ref{thm_pcb_cts}, we have
$x^{\infty}\leq x^*$ if $c\leq \frac{\alpha-1}{\alpha}rK$, and
$x^{\infty}\leq \bar{s}$ if $c>\frac{\alpha-1}{\alpha}rK$.

On the other hand, by (\ref{complicated_eq_x*}), $x^{*,\lambda}$ is
the solution to the following allergic equation
\begin{align}
\label{complicated_eq_x*_rearrange}
&\left[\left(\frac{x}{\bar{s}^{\lambda}}\right)^{\beta^+-\beta^-}-1\right]\left[\left(\alpha-\frac{\lambda}{q+\lambda}\right)\gamma
x-\alpha\frac{c}{r}-\beta^+\left(\frac{q}{q+\lambda}\gamma
x-\frac{c}{r+\lambda}\right)\right]\\
=&\ (\beta^+-\beta^-)\left(\frac{q}{q+\lambda}\gamma
x-\frac{c}{r+\lambda}\right)\notag.
\end{align}
Sending $\lambda\to\infty$ in (\ref{complicated_eq_x*_rearrange}),
since the right hand side of (\ref{complicated_eq_x*_rearrange}) has
the limit $0$, we obtain
\begin{equation*}
\lim_{\lambda\to\infty}\underbrace{\left[\left(\frac{x^{*,\lambda}}{\bar{s}^{\lambda}}\right)^{\beta^+-\beta^-}-1\right]}_{I^{\lambda}}\underbrace{\left[\left(\alpha-\frac{\lambda}{q+\lambda}\right)\gamma
x^{*,\lambda}-\alpha\frac{c}{r}-\beta^+\left(\frac{q}{q+\lambda}\gamma
x^{*,\lambda}-\frac{c}{r+\lambda}\right)\right]}_{II^{\lambda}}=0.
\end{equation*}
This implies at least one of $I^{\lambda}$ and $II^{\lambda}$ has
the limit 0.

If $c<\frac{\alpha-1}{\alpha}rK$, we have
$\lim_{\lambda\to\infty}I^{\lambda}=-1$, since
\[\lim_{\lambda\to\infty}\frac{x^{*,\lambda}}{\bar{s}^{\lambda}}=\frac{x^{\infty}}{\bar{s}}\leq \frac{x^*}{\bar{s}}=\frac{\alpha}{\alpha-1}\frac{c}{rK}<1.\]
This implies $\lim_{\lambda\to\infty}II^{\lambda}=0$, i.e.
$x^{\infty}=x^*$.

If $c>\frac{\alpha-1}{\alpha}rK$, we have
\[\lim_{\lambda\to\infty}II^{\lambda}=(\alpha-1)\gamma x^{\infty}-\alpha\frac{c}{r}<(\alpha-1)(\gamma x^{\infty}-K)\leq 0,\]
which implies $\lim_{\lambda\to\infty}I^{\lambda}=0$, i.e.
$x^{\infty}=\bar{s}$.

If $c=\frac{\alpha-1}{\alpha}rK$, it is easy to check that
$x^{\infty}=x^*=\bar{s}$.

Hence, we have established the convergence of
$x^{*,\lambda}\rightarrow x^3$ as $\lambda\rightarrow\infty$. As a
consequence, it also follows that $v^{3,\lambda}(s)\rightarrow
v^3(s)$. However, due to the lack of explicit solutions for Case III, it is unclear what is the corresponding convergence rate.\\

\textbf{Acknowledgments}. The author would like to thank the editor,
associate editor, and two referees for their valuable comments and
suggestions on the manuscript.

\newpage


\begin{figure}[h]
  \includegraphics[width=\textwidth]{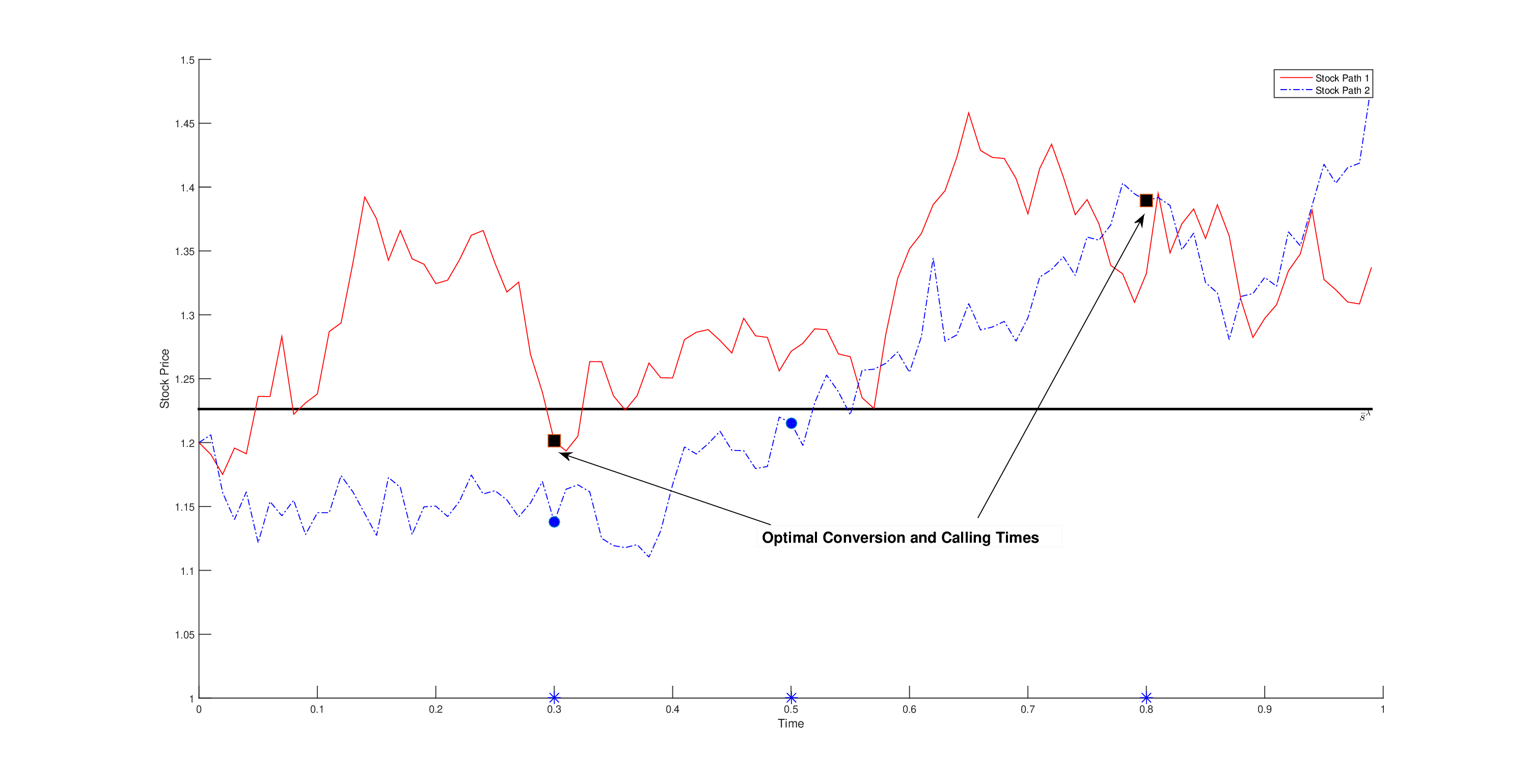}
  \centering
  \caption{Scenario Simulation for Case I. The figure shows two simulated
  stock price paths in the case of $qK<c<rK$.
  Parameter values are $K=1$, $r$=0.05, $q$=0.03, $\sigma$=0.2, $\gamma$=1 and $\lambda$=1.
  The initial stock price is set to $s$=1.2.
  The bold horizontal line describes the conversion and calling boundary $\bar{s}^{\lambda}$.
  Given the Poisson times $T_1$=0.3, $T_2$=0.5 and $T_3$=0.8, the investor will convert and the firm will
  call the bond both at $T_1$ (for path 1) and $T_3$ (for path 2).}
  \label{fig_simulation1}
\end{figure}
\begin{figure}[h]
  \includegraphics[width=\textwidth]{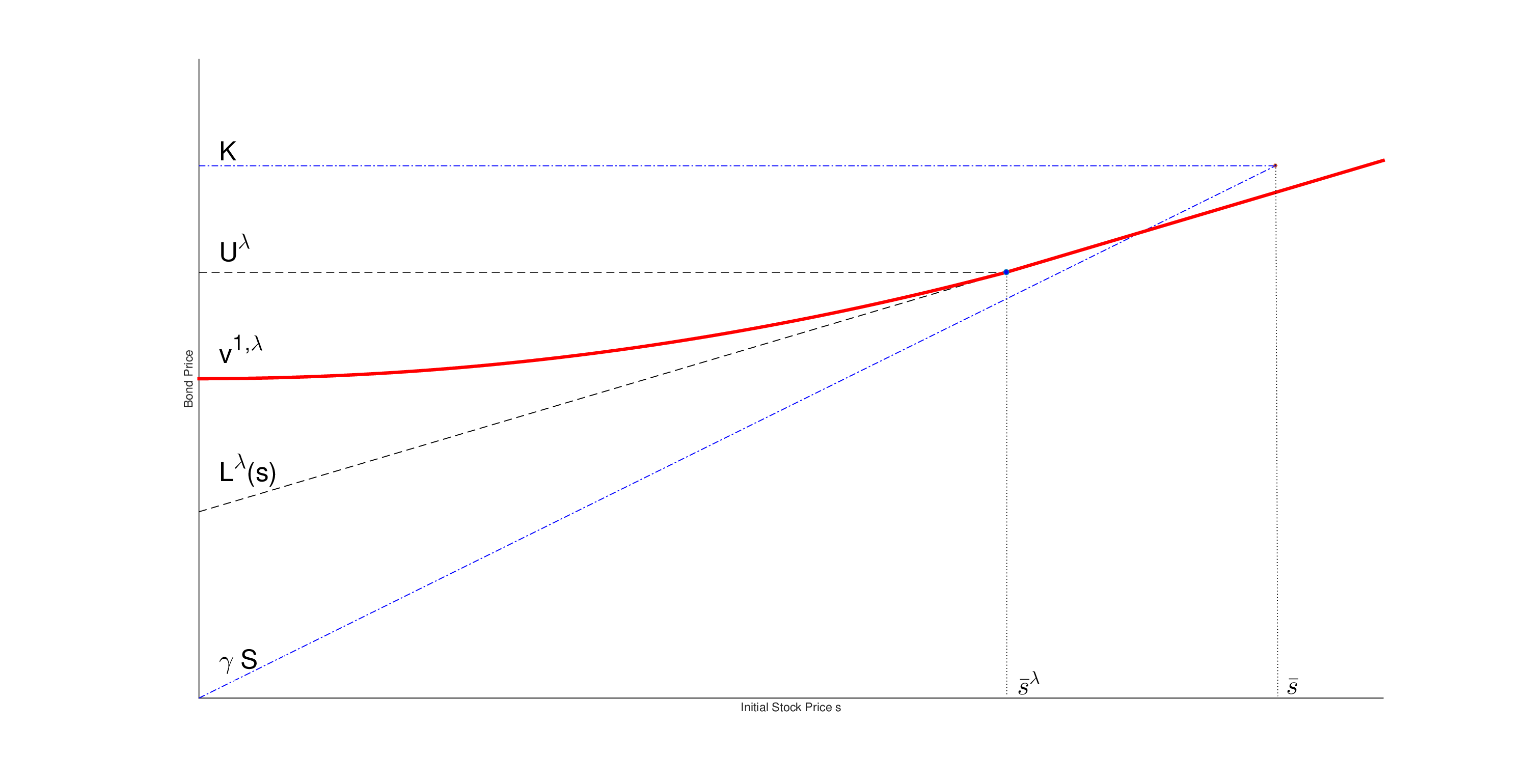}
  \centering
  \caption{The value function $v^{1,\lambda}$ for Case I.}
  \label{fig_sketch1}
\end{figure}
\begin{figure}[h]
  \includegraphics[width=\textwidth]{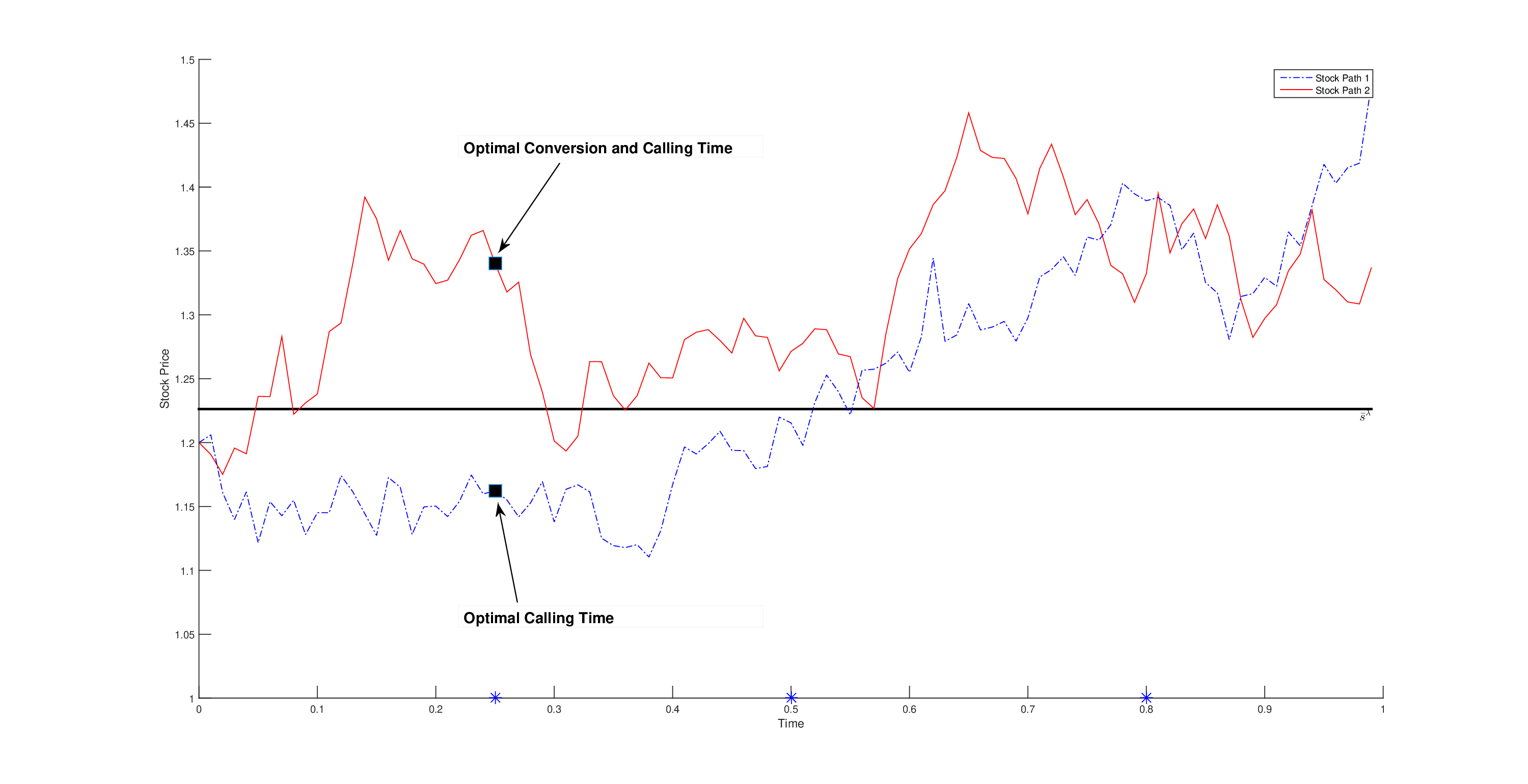}
  \centering
  \caption{Scenario Simulation for Case II. The figure shows two simulated stock price paths in the case of $c\geq rK$. The parameters are the same as those in Figure \ref{fig_simulation1}.The bold horizontal line describes the conversion boundary $\bar{s}^{\lambda}$. Given the Poisson times $T_1$=0.25, $T_2$=0.5 and $T_3$=0.8, the firm will call the bond at $T_1$ (marked square) for the stock price path 1; and both the firm and the investor will terminate the contract at $T_1$ (marked square) for the stock price path 2.}
  \label{fig_simulation2}
\end{figure}
\begin{figure}[h]
  \includegraphics[width=\textwidth]{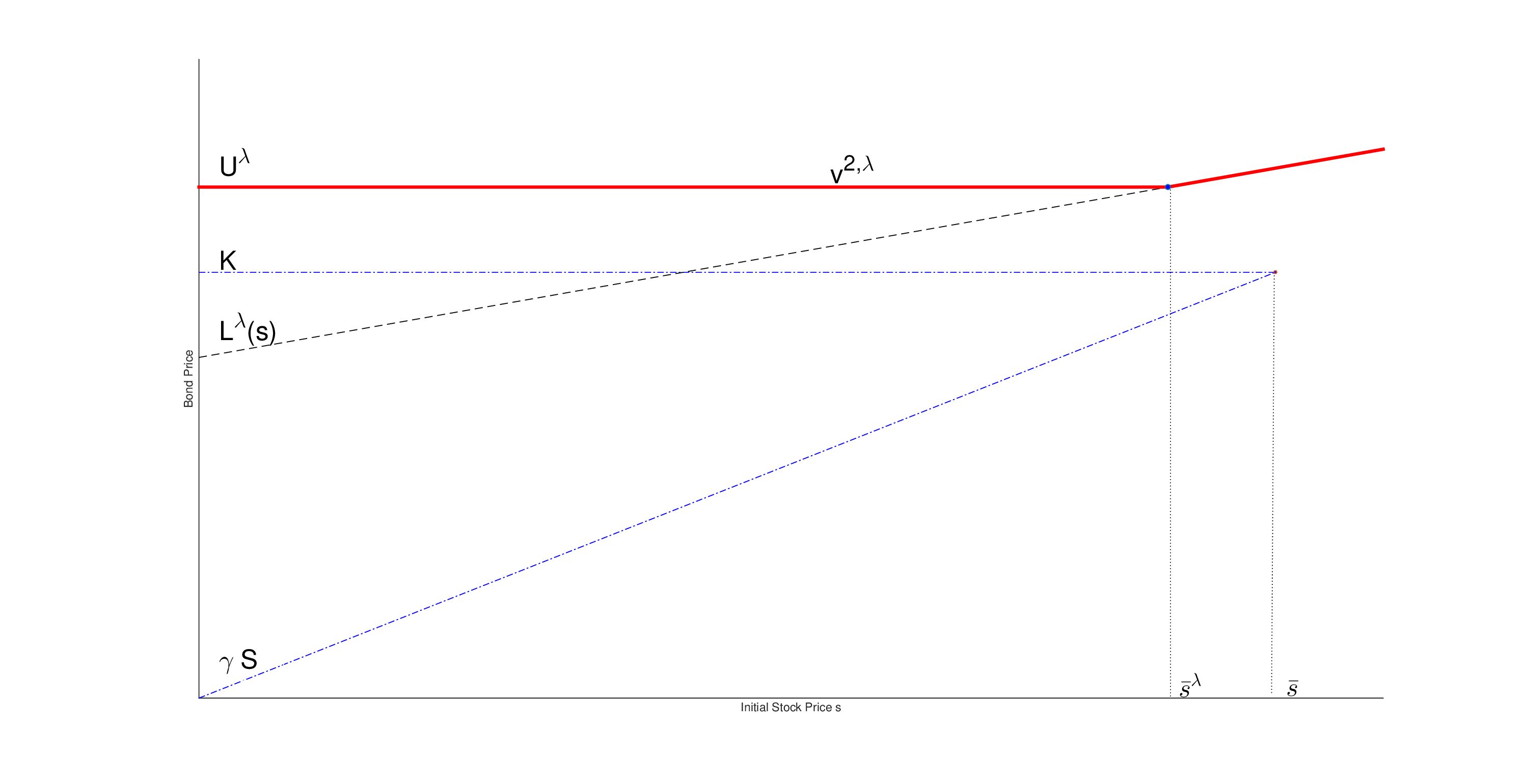}
  \centering
  \caption{The value function $v^{2,\lambda}$ for Case II.}
  \label{fig_sketch2}
\end{figure}
\begin{figure}[h]
  \includegraphics[width=\textwidth]{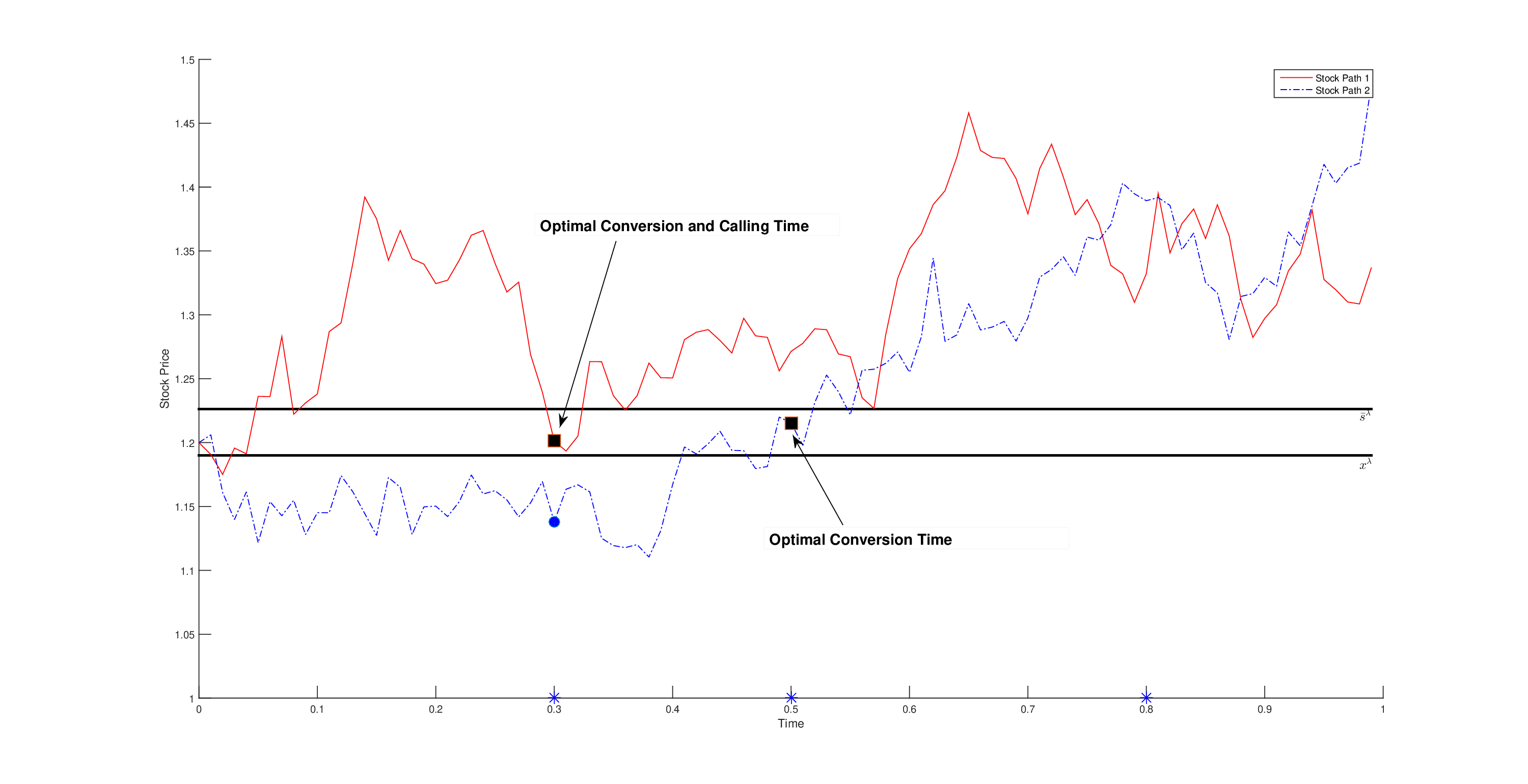}
  \centering
  \caption{Scenario Simulation for Case III. The figure shows two simulated stock price paths in the case of
  $c\leq qK$. The parameters are the same as those in Figure \ref{fig_simulation1}. The top bold horizontal line is the calling boundary $\bar{s}^{\lambda}$, and the bottom bold horizontal line is the conversion boundary $x^{*,\lambda}$. Given the Poisson times $T_1$=0.3, $T_2$=0.5 and $T_3$=0.8, both the investor and the firm will terminate the contract at $T_1$ (marked square) for the stock price path 1; and the invertor will convert her bond $T_2$ (marked square) for the stock price path 2.}
  \label{fig_simulation3}
\end{figure}
\begin{figure}[h]
  \includegraphics[width=\textwidth]{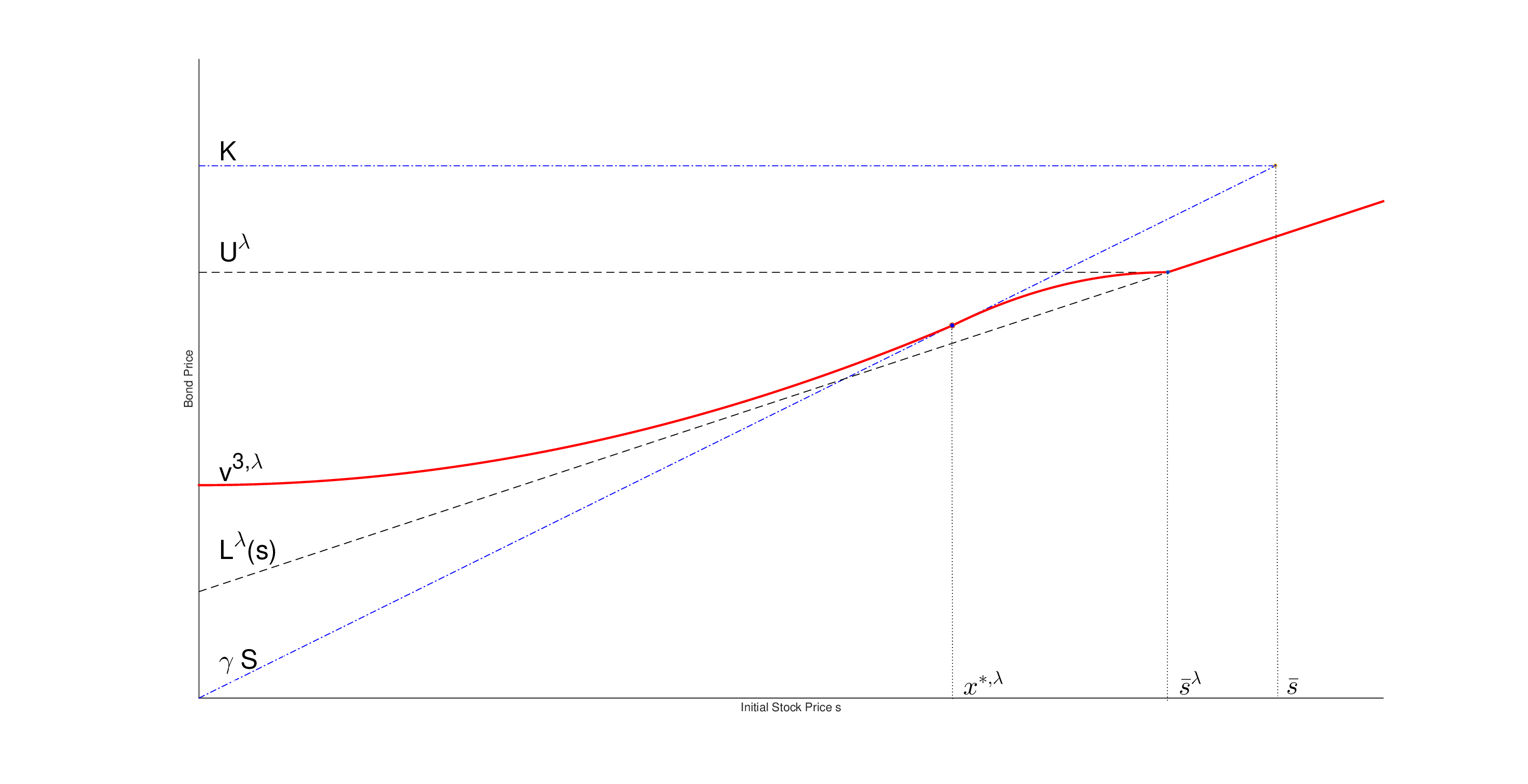}
  \centering
  \caption{The value function $v^{3,\lambda}$ for Case III.}
  \label{fig_sketch3}
\end{figure}

\end{document}